\documentclass[12pt,a4paper,twoside,english]{amsart}

\usepackage[english]{babel}
\usepackage[latin1]{inputenc}
\usepackage{amssymb, amsmath, amsrefs,amsthm}
\usepackage{graphicx}
\usepackage{hyperref}
\usepackage{enumitem}
\usepackage{color}

\makeatletter
\g@addto@macro\bfseries{\boldmath}
\makeatother

\newcommand{\norm}[1]{\left\Vert#1\right\Vert}
\newcommand{\brkt}[1]{\left(#1\right)}

\newcommand{\abs}[1]{\left|#1\right|}

\newcommand{\esc}[1]{\langle #1\rangle}

\renewcommand{\Bbb}{\mathbb} 
\newcommand{\J}{{J}}
\newtheorem{teo}{Theorem}[section]
\newtheorem{cor}[teo]{Corollary}
\newtheorem{exam}[teo]{Example}

\newtheorem{lem}[teo]{Lemma}
\newtheorem{defi}[teo]{Definition}
\newtheorem{prop}[teo]{Proposition}
\newtheorem{rem}[teo]{Remark}

\setlength{\textwidth}{150mm}
\setlength{\textheight}{225mm}
\setlength{\oddsidemargin}{6mm}
\setlength{\evensidemargin}{6mm}
\pretolerance=3000
\tolerance=3000
\setlength{\parindent}{12pt}
\usepackage{float}

\newcommand{\R}{\mathbb{R}}

\newcommand{\N}{\mathbb{N}}
\newcommand{\SW}{\mathcal{S}}
\newcommand{\BMO}{\mathrm{BMO}}
\newcommand{\bmo}{\mathrm{bmo}}
\newcommand{\ddd}{\,\text{\rm{\mbox{\dj}}}}
\newcommand{\dd}{\mathrm{d}}

\newcommand{\Rw}{\mathrm{w}}
\newcommand{\Symb}{\sigma}

\title[] {Some endpoint estimates for bilinear Coifman-Meyer multipliers}
\author[S. Arias]{Sergi Arias}
\address{Department of Mathematics, Stockholm University, SE-106 91 Stockholm, Sweden}
\email{arias@math.su.se}
\author[S.~Rodr\'iguez-L\'opez]{Salvador Rodr\'iguez-L\'opez}
\address{Department of Mathematics, Stockholm University, SE-106 91 Stockholm, Sweden}
\email{s.rodriguez-lopez@math.su.se}

\thanks{The second author is partially supported by the Spanish Government grant MTM2016-75196-P}

\makeatletter
\@namedef{subjclassname@2020}{%
  \textup{2020} Mathematics Subject Classification}
\makeatother

\subjclass[2020]{Primary 42B15;  Secondary 35A23, 35S50,  42B35}
\keywords{Bilinear multipliers, local bmo, bilinear paraproducts, Kato-Ponce inequalities, product of functions}

\begin{document}

\begin{abstract}
    In this paper we establish mapping properties of bilinear Coifman-Meyer multipliers acting on the product spaces $H^1(\R^n)\times\bmo(\R^n)$ and $L^p(\R^n)\times\bmo(\R^n)$, with $1<p<\infty$. As application of these results, we obtain some related Kato-Ponce-type inequalities involving the endpoint space $\bmo(\R^n)$, and we also study the pointwise product of a function in $\bmo(\R^n)$ with functions in $H^1(\R^n)$, $h^1(\R^n)$ and $L^p(\R^n)$, with $1<p<\infty$.
\end{abstract}
\maketitle

\section{Introduction}
The aim of this paper is to obtain some endpoint estimates for bilinear Coifman-Meyer multipliers, which are operators of the form
\begin{equation*}
    T_\Symb(f,g)(x):=\iint\Symb(\xi,\eta)\widehat{f}(\xi)\widehat{g}(\eta)e^{ix(\xi+\eta)}\ddd\xi\ddd\eta,
\end{equation*}
where $\ddd \xi=(2\pi)^{-n}\dd \xi$ denotes the normalised Lebesgue measure, and the symbol $\Symb$ is a smooth function on $\R^n\times\R^n\setminus\lbrace(0,0
)\rbrace$ satisfying that, for all pair of multi-indices $\alpha,\beta\in\N^n$
\begin{equation}\label{CM_mult_property}
    \abs{\partial_\xi^\alpha\partial_\eta^\beta\Symb(\xi,\eta)}\lesssim(\vert\xi\vert+\vert\eta\vert)^{-\vert\alpha\vert-\vert\beta\vert}, \quad \mbox{for all $(\xi,\eta)\neq(0,0)$.}
\end{equation}

The literature on mapping properties of bilinear Coifman-Meyer multipliers is vast, so we conceal our discussion to those works more relevant to the current paper. The study of those operators was initiated by R. Coifman and Y. Meyer \cite{CMbook}, and can be seen as particular instances of bilinear Calder\'on-Zygmund operators. As such, it follows from the general theory by  L. Grafakos and R. Torres \cite{Grafakos-Torres}, that these operators map $L^p(\R^n)\times L^q(\R^n)$ continuously into $L^r(\R^n)$ under the relation $1/r=1/p+1/q$, where $1<p,q\leq\infty$ and $1/2<r<\infty$. Moreover, and relevant to this paper, the following endpoint estimates are also known:
\begin{enumerate}[label=(\roman*)]
    \item\label{Obj:1} $H^1(\R^n)\times L^\infty(\R^n) \to L^1(\R^n)$;
    \item\label{Obj:2}  $L^p(\R^n)\times L^\infty(\R^n) \to L^p(\R^n)$, for $1<p<\infty$;
    \item 
    $L^\infty(\R^n)\times L^\infty(\R^n) \to \BMO(\R^n)$.
\end{enumerate}
Mapping properties of bilinear Coifman-Meyer multipliers acting on $\bmo(\R^n)\times \bmo(\R^n)$ were established in \cite{Paper1}*{Theorem 7.1}. Nevertheless, to the best of our knowledge, the boundedness properties of such operators acting on product spaces of the type $L^p(\R^m)\times \bmo(\R^n)$, for $1<p<\infty$, and $H^1(\R^m)\times \bmo(\R^n)$ is not covered by the results existing in the literature. 

The aim of this paper is to study these remaining cases. In this way, our investigation is a continuation of that initiated in \cite{Paper1}.
These results are obtained as a consequence (see Corollary \ref{Cor:main_bmo}) of a more general theorem (see Theorem \ref{CM_main_thm} below), where we establish boundedness properties analogous to \ref{Obj:1} and \ref{Obj:2}, where $L^\infty(\R^n)$ is replaced by a suitable intermediate space $X_w(\R^n)$, lying in between $L^\infty(\R^n)$ and $\bmo(\R^n)$, associated to an admissible weight $w$. The adequate choice of weight, allows both to recover the existing boundeness properties \ref{Obj:1} and \ref{Obj:2}, as well as to obtain those involving $\bmo(\R^n)$. The precise definition of admissible weights and that of the space $X_w(\R^n)$ are given in Section \ref{sect:admissible_w} below. 

The range of Coifman-Meyer multipliers acting on $\bmo(\R^n)\times \bmo(\R^n)$ obtained in \cite{Paper1}, lies in a space of generalised smoothness, defined as a potential-type space, involving $\BMO(\R^n)$, and denoted by $\J_{\Rw_1}(\BMO(\R^n))$ in this paper (see Section \ref{generalised_smoothness_spaces} for its definition). This space is analogous to those introduced by R. S. Strichartz \cite{Strichartz}, $\J^s(\BMO)$ with $s\in \R$, where $\J^s=(1-\Delta)^{s/2}$ denotes the Bessel potential operator. Heuristically, the space $\J^s(\BMO)$ consists of  \lq\lq derivatives of order $s$" of functions in $\BMO(\R^n)$, while the space $\J_{\Rw_1}(\BMO(\R^n))$ consists of \lq\lq derivatives of logarithmic order" of functions in $\BMO(\R^n)$. The formal definition and properties of these spaces of generalised smoothness are given in Section \ref{generalised_smoothness_spaces} below.

The literature on the study of spaces of generalised smoothness, and those of logarithmic smoothness in particular, is considerably large. Among those we refer, for their relevance to this paper, to the works of V. Mikhailets and A. A. Murach \cite{Mikhailets}, A. Caetano and S. Moura \cite{Caetano-Moura} and S. Moura \cite{Moura}. Nevertheless, we also refer the interested reader to the recent work of O. Dominguez and S. Tikhonov \cite{dominguez2018function} and the references therein, for a recent overview of the field and related results for function spaces with logarithmic smoothness. 

As in \cite{Paper1}, to obtain the mapping properties studied in this paper, we introduce a family of spaces of generalised smoothness, of potential-type, associated to either $L^p(\R^n)$ or $H^1(\R^n)$. We also show (see Proposition \ref{pot_as_triebel} and Proposition \ref{pot_as_hor} below) that, in several cases, these spaces coincide with some spaces already existing in the literature \cites{Caetano-Moura, Moura, Mikhailets}.

The study of bilinear Coifman-Meyer multipliers can be reduced to that of paraproducts of the type
\begin{equation*} %
    \Pi(f,g)=\int_0^\infty(Q_tf)(P_tg)m(t)\frac{\dd t}{t},
\end{equation*}
where $Q_t$ and $P_t$ are frequency localisation operators and $m$ is a bounded function (see Section \ref{SecPara}). In this way, our main results are obtained by the use of new estimates for bilinear paraproducts (see Theorem \ref{bmo_Lp}).

One of the applications that follows from the main theorem of this paper is an endpoint inequality of Kato-Ponce-type. These kind of inequalities have been largely studied, and we used as a main reference the works by L. Grafakos and S. Oh \cite{Grafakos-Oh}, by L. Grafakos, D. Maldonado and V. Naibo \cite{Gra-Mal_Nai}, V. Naibo and A. Thomson \cite{Naibo-Thomson} and K. Koezuka and N. Tomita \cite{Koezuka_Tomita}. In particular, it is known (see e.g. \cite{Koezuka_Tomita}*{Corollary 1.2} that 
\begin{equation*}
    \norm{J^{s}(fg)}_{h^p(\R^n)}\lesssim\norm{J^{s}f}_{h^{p}(\R^n)}\norm{g}_{L^{\infty}(\R^n)}+\norm{f}_{h^{p}(\R^n)}\norm{J^{s}g}_{L^{\infty}(\R^n)}
\end{equation*}
provided $s>0$, $1\leq p<\infty$. We obtain related end-point estimates to this one (see Corollary \ref{Kato-Ponce} below), with $L^\infty(\R^n)$ replaced by $\bmo(\R^n)$, albeit with the stronger restriction on $s$ to be larger than $4n+1$.

Regarding the endpoint case $s=0$, namely estimates for the product of two functions, one in $\BMO(\R^n)$ and the other in the Hardy space $H^1(\R^n)$, have been investigated  by A. Bonami, T. Iwaniec, P. Jones and M. Zinsmeister in \cite{Bon-Iwa-Jon-Zin} and by A. Bonami, S. Grellier and L. D. Ky in \cite{Bon-Gre-Ky}. Likewise, J. Cao, L. D. Ky and D. Yang  \cite{Cao-Ky-Yang} studied the counterpart problem where one of the terms lies in the local Hardy space $h^p(\R^n)$, for $0<p\leq 1$, and the other in the local $\bmo(\R^n)$ space. In these studies, the product is decomposed in two terms, one belonging to  $L^1(\R^n)$, and the other in a suitable Musielak-Orlicz-Hardy space.  

As a final application of our main results, we obtain some counterparts of these results for the range $1\leq p<\infty$. In both Corollary \ref{cor:Lp_potential_prod} and  Corollary \ref{teo_prod}, the product is realised in one of the potential-type spaces of generalised smoothness introduced in this paper. In addition, Corollary  \ref{teo_prod_2} allows the product to be decomposed in two terms, one belonging to  $L^1(\R^n)$, and the other in a Musielak-Orlicz-Hardy space.     

This article is organised as follows. Section \ref{prel} is devoted to introduce the notions and tools needed to state and prove the main results. It is divided in three parts. In a first one, we introduce some function spaces and technical results related to them. In a second one, we introduce the notion of admissible weights, and the associated function spaces used throughout the paper.
In the last part, we define the potential-type spaces of generalised smoothness, and study some of their properties.

In Section \ref{main_results} we state the main theorem of the article. The mapping properties for paraproducts (Theorem \ref{bmo_Lp}) are presented in Section \ref{SecPara}. The proof of Theorem \ref{CM_main_thm} is given in Section \ref{CFM_section} and some of its consequences are introduced in Section \ref{Applications}. 

Finally, we present an Appendix with a self-contained and direct proof of the $L^2$-estimates obtained for paraproducts in Theorem \ref{bmo_Lp} by interpolation. 

\section{Preliminaries}\label{prel}

\subsection{Generalities}
The notation $A\lesssim B$ will be used to indicate the existence of a constant $C>0$ such that $A\leq C B$. Similarly, we will write $A\thickapprox B$ if both $A\lesssim B$ and $B\lesssim A$ hold. We will also use the notation $\langle\xi\rangle=(1+\vert\xi\vert^2)^{1/2}$ for $\xi\in\R^n$.

The space of Schwartz functions will be denoted by $\SW(\R^n)$ and its topological dual, the space of tempered distributions, by $\SW'(\R^n)$. For a function $f\in \SW(\R^n)$ we define its Fourier transform as
\[
    \widehat{f}(\xi)=\int_{\R^n}f(x)e^{-i x\xi}\ddd x
\]
and we will write
\[
    a(tD)f(x)=\int_{\R^n}a(t\xi)\widehat{f}(\xi)e^{i x\xi}\ddd\xi
\]
for appropriate symbols $a$, or simply $a(D)$ when $t=1$.

A diverse collection of function spaces will be appearing throughout the paper. Lets recall that the Hardy space $H^1(\R^n)$, is the space of tempered distributions $f$ for which the non-tangential maximal function
\begin{equation}\label{hardy_def}
    x\mapsto\sup_{t>0}\sup_{\abs{x-y}<t}\abs{(\Phi_t\ast f)(y)}
\end{equation}
belongs to $L^1(\R^n)$, endowed with the norm
\[
    \norm{f}_{H^1(\R^n)}:=\norm{\sup_{t>0}\sup_{\abs{x-y}<t}\abs{(\Phi_t\ast f)(y)}}_{L^1(\R^n)}.
\]
Here $\Phi$ is a Schwartz function with $\int\Phi=1$ and the notation $\Phi_t(x)=t^{-n}\Phi(x/t)$ will be used from now on, with $t>0$ and $x\in\R^n$. 

The local version of the Hardy space, denoted by $h^1(\R^n)$ and introduced by D. Goldberg \cite{Goldberg}, is the space of tempered distributions $f$ for which the truncated non-tangential maximal function
\[
    x\mapsto\sup_{0<t<\frac{1}{2}}\sup_{\abs{x-y}<t}\abs{(\Phi_t\ast f)(y)}
\]
belongs to $L^1(\R^n)$, endowed with the norm given by 
\begin{equation}\label{def:norm_h1}
    \norm{f}_{h^1(\R^n)}:=\norm{\sup_{0<t<\frac{1}{2}}\sup_{\abs{x-y}<t}\abs{(\Phi_t\ast f)(y)}}_{L^1(\R^n)}.
\end{equation} 

The space of functions of bounded mean oscillation, $\BMO(\R^n)$, which can be identified with the dual space of $H^1(\R^n)$, is the set of all those locally integrable functions $f$ on $\R^n$ for which
\[
    \norm{f}_{\BMO(\R^n)}:=\sup_Q\frac{1}{\vert Q\vert}\int_Q\vert f(x)-f_Q\vert\dd x<\infty.
\]
The supremum is taken over all cubes in $\R^n$ whose sides are parallel to the axis, while $\vert Q\vert$ denotes the Lebesgue measure of the cube $Q$ and $f_Q$ is the average of $f$ over $Q$, namely $f_Q=\frac{1}{\vert Q\vert}\int_Qf(x)\dd x$.

The local version of $\BMO(\R^n)$ was considered by D. Goldberg in \cite{Goldberg}, and it will be denoted by $\bmo(\R^n)$. It is defined to be the set of all locally integrable functions $f$ on $\R^n$ for which
\begin{equation}\label{Zipi}
    \norm{f}_{\bmo(\R^n)}:=\sup_{\ell(Q)<1}\frac{1}{\vert Q\vert}\int_Q\vert f(x)-f_Q\vert\dd x + \sup_{\ell(Q)\geq 1}\frac{1}{\vert Q\vert}\int_Q\vert f(x)\vert\dd x<\infty.
\end{equation}
Here $\ell(Q)$ denotes the side length of the cube $Q$. The function space $\bmo(\R^n)$ is the dual space of $h^1(\R^n)$ and it is continuously embedded in $\BMO(\R^n)$.

We recall (see e.g. \cite{Grafakos2}*{Section 3.3.1}) that a measure $\dd\mu(x,t)$ on $\R^{n+1}_+$ is a Carleson measure if there exists a constant $C>0$ such that
\begin{equation}\label{ecu33}
\mu(T(Q))\leq C\abs{Q}
\end{equation}
for all cube $Q$ in $\R^n$, where $T(Q):=Q\times (0,\ell(Q)]$. The norm of the Carleson measure, denoted by $\norm{\mu}_{\mathcal{C}}$, is considered to be the infimum of the set of all constants $C>0$ satisfying \eqref{ecu33}. 

We shall also recall the relations between Carleson measures and functions in $\BMO(\R^n)$, given in the the following result, whose proof can be found in \cite{Grafakos2}*{Theorem 3.3.8 b)} and \cite{Grafakos2}*{Theorem 3.3.8 c)} respectively.
\begin{teo}\label{rem_BMO_CM}
    Let $\Psi$ be a Schwartz function satisfying $\widehat{\Psi}(0)=0$ and
    \[
    \sup_{\xi\in\R^n}\int_0^\infty\abs{\widehat{\Psi}(t\xi)}^2\frac{\dd t}{t}<\infty.
    \]
    Then, for all function $b\in\BMO(\R^n)$, the measure defined by
    \[
    \dd\mu(x,t)=\abs{(\Psi_t\ast b)(x)}^2\dd x\frac{\dd t}{t}
    \]
    is a Carleson measure  with norm bounded by a constant times $\norm{b}_{\BMO(\R^n)}^2$.
\end{teo}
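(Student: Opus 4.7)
The plan is to fix a cube $Q$ of side length $r=\ell(Q)$ and show that
\[
\int_0^{r}\int_Q \abs{(\Psi_t\ast b)(x)}^2\dd x\,\frac{\dd t}{t}\lesssim \abs{Q}\norm{b}_{\BMO(\R^n)}^2,
\]
by splitting $b$ according to the geometry of $Q$. Let $Q^{*}$ be the concentric dilate of $Q$ with side length $c\sqrt{n}\,r$ for a fixed large $c$, and write
\[
b=(b-b_{Q^*})\chi_{Q^*}+(b-b_{Q^*})\chi_{(Q^*)^c}+b_{Q^*}=:b_1+b_2+c.
\]
Since $\widehat{\Psi}(0)=0$, we have $\int_{\R^n}\Psi=0$, so $\Psi_t\ast c\equiv 0$ and only $b_1$ and $b_2$ contribute.

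For the local piece $b_1$, I would extend the $(x,t)$-integration to all of $\R^{n+1}_+$ and invoke Plancherel together with the hypothesis
\[
C_\Psi:=\sup_{\xi\in\R^n}\int_0^\infty \abs{\widehat{\Psi}(t\xi)}^2\,\frac{\dd t}{t}<\infty,
\]
obtaining
\[
\int_0^\infty\int_{\R^n}\abs{(\Psi_t\ast b_1)(x)}^2\dd x\,\frac{\dd t}{t}=\int_{\R^n}\abs{\widehat{b_1}(\xi)}^2\!\int_0^\infty\abs{\widehat{\Psi}(t\xi)}^2\frac{\dd t}{t}\ddd\xi\leq C_\Psi\norm{b_1}_{L^2(\R^n)}^2.
\]
Then John--Nirenberg gives $\norm{b_1}_{L^2(\R^n)}^2\lesssim \abs{Q^*}\norm{b}_{\BMO(\R^n)}^2\lesssim \abs{Q}\norm{b}_{\BMO(\R^n)}^2$, which is the desired bound on this term.

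For the far piece $b_2$, I would use that $\Psi\in\SW(\R^n)$ enjoys the decay $\abs{\Psi_t(z)}\lesssim_N t^N(t+\abs{z})^{-n-N}$ for any $N$. For $x\in Q$ and $y\notin Q^*$ one has $\abs{x-y}\gtrsim \abs{y-c_Q}$, where $c_Q$ is the centre of $Q$. Decomposing $(Q^*)^c$ into the dyadic annuli $A_j=\{y:2^{j}r<\abs{y-c_Q}\leq 2^{j+1}r\}$ for $j\geq j_0$, and using the standard BMO estimate
\[
\int_{\abs{y-c_Q}\leq 2^{j+1}r}\abs{b(y)-b_{Q^*}}\,\dd y\lesssim (j+1)\,2^{(j+1)n}\abs{Q}\norm{b}_{\BMO(\R^n)},
\]
I would sum in $j$ to obtain, for $0<t\leq r$ and $x\in Q$, a pointwise bound of the form
\[
\abs{(\Psi_t\ast b_2)(x)}\lesssim (t/r)^{\alpha}\norm{b}_{\BMO(\R^n)},
\]
for some $\alpha>0$ (this is where taking $N$ large enough in the decay of $\Psi$ is essential). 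Integrating squares against $\dd x\,\dd t/t$ on $Q\times(0,r]$ yields the bound $\lesssim \abs{Q}\norm{b}_{\BMO(\R^n)}^2\int_0^r(t/r)^{2\alpha}\,\dd t/t$, which is finite, completing the estimate.

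The main obstacle is the annular estimate on the far piece, since the $\log$-type growth of $\int_{\abs{y-c_Q}\leq 2^{j+1}r}\abs{b-b_{Q^*}}$ must be overcome by the power decay coming from $\abs{\Psi_t(x-y)}$; the remedy is simply to choose $N$ (and hence $\alpha$) large enough in the Schwartz decay of $\Psi$. The other delicate point is to verify that $b_{Q^*}$ really is absorbed by the cancellation of $\Psi$, which is immediate from $\widehat{\Psi}(0)=0$.
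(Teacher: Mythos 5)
Your argument is correct and is essentially the classical proof of this result; the paper itself does not prove the theorem but cites \cite{Grafakos2}*{Theorem 3.3.8 b)}, whose proof follows the same scheme (split $b$ into a mean-zero local part handled by Plancherel and John--Nirenberg, a far part handled by the Schwartz decay of $\Psi$ against the logarithmic growth of BMO averages, and a constant killed by $\widehat{\Psi}(0)=0$). The only cosmetic remark is that $N\geq 1$ already suffices to beat the logarithmic factor in the far-field sum, so no largeness of $N$ is actually needed.
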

\begin{teo}\label{IO_BMO_CM}
    Let $K_t(x,y)$, $t>0$, be a collection of functions defined on $\R^n\times\R^n$ for which there exists $\delta>0$ such that
    \[
        \abs{K_t(x,y)}\lesssim\frac{t^\delta}{(t+\abs{x-y})^{n+\delta}}
    \]
    for all $t>0$ and $x,y\in\R^n$. Define for every $t>0$ the linear operators
    \[
        R_tf(x)=\int_{\R^n}K_t(x,y)f(y)\dd y.
    \]
    Assume that $R_t1\equiv 0$ for all $t>0$ and that the estimate
    \[
        \int_0^\infty\!\!\!\!\int_{\R^n}\abs{R_tf(x)}^2\dd x\frac{\dd t}{t}\lesssim\norm{f}_{L^2(\R^n)}^2
    \]
    holds for all $f\in L^2(\R^n)$. Then, for all $b\in\BMO(\R^n)$, the measure defined by
    \[
        \abs{R_tb(x)}^2\dd x\frac{\dd t}{t}
    \]
    is a Carleson measure with norm bounded by a constant times $\norm{b}_{\BMO(\R^n)}^2$.
\end{teo}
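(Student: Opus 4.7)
The plan is to adopt a classical splitting argument: fix an arbitrary cube $Q\subset\R^n$ and prove the pointwise Carleson bound $\int_{T(Q)}\abs{R_tb(x)}^2\dd x\,\dd t/t\lesssim\abs{Q}\norm{b}_{\BMO(\R^n)}^2$, exploiting both the annihilation hypothesis $R_t1\equiv0$ and the two structural assumptions on $R_t$ (kernel size bound and global $L^2$ tent estimate).

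Specifically, I would decompose
\[
b=(b-b_{2Q})\mathbf{1}_{2Q}+(b-b_{2Q})\mathbf{1}_{\R^n\setminus 2Q}+b_{2Q}=:b_1+b_2+b_3,
\]
so that $R_tb_3=b_{2Q}\cdot R_t1\equiv 0$ by hypothesis, reducing the problem to bounding the contributions of $b_1$ and $b_2$ separately. For the local piece $b_1$ the global $L^2$ tent estimate together with the $L^2$-version of the John-Nirenberg inequality yields
\[
\int_{T(Q)}\abs{R_tb_1(x)}^2\dd x\frac{\dd t}{t}\leq\int_0^\infty\!\!\!\!\int_{\R^n}\abs{R_tb_1(x)}^2\dd x\frac{\dd t}{t}\lesssim\norm{b_1}_{L^2(\R^n)}^2=\int_{2Q}\abs{b-b_{2Q}}^2\dd x\lesssim\abs{Q}\norm{b}_{\BMO(\R^n)}^2.
\]

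For the far piece $b_2$ the strategy is to derive a pointwise estimate for $R_tb_2(x)$ on $T(Q)$ by decomposing $\R^n\setminus 2Q$ into dyadic annuli $2^{k+1}Q\setminus 2^kQ$, $k\geq 1$. For $x\in Q$, $t\leq\ell(Q)$ and $y$ in the $k$-th annulus one has $\abs{x-y}\thickapprox 2^k\ell(Q)$, and the standard BMO estimate yields $\abs{b_{2^{k+1}Q}-b_{2Q}}\lesssim k\,\norm{b}_{\BMO(\R^n)}$. Combining these facts with the kernel size hypothesis and summing the resulting dyadic series (which converges thanks to the strictly positive parameter $\delta$), a direct computation gives
\[
\abs{R_tb_2(x)}\lesssim\brkt{\frac{t}{\ell(Q)}}^\delta\norm{b}_{\BMO(\R^n)}.
\]
Integrating this bound over $T(Q)$ against $\dd x\,\dd t/t$ produces an extra factor $\abs{Q}\int_0^{\ell(Q)}(t/\ell(Q))^{2\delta}\dd t/t\lesssim\abs{Q}$, which closes the estimate.

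The main obstacle I would anticipate is the pointwise bound on $R_tb_2$: the logarithmic growth of the BMO mean oscillations along dyadic scales must be absorbed by the kernel decay, and this is precisely where the strictly positive parameter $\delta$ plays the decisive role. Everything else is a routine combination of the $L^2$ tent hypothesis with John-Nirenberg, and the cancellation $R_t1\equiv 0$ is used exactly once, to discard the constant remainder $b_{2Q}$ in the splitting.
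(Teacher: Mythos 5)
Your argument is correct and is exactly the classical proof of this fact; the paper itself does not prove the statement but cites \cite{Grafakos2}*{Theorem 3.3.8 c)}, whose proof proceeds by the same decomposition $b=(b-b_{2Q})\mathbf{1}_{2Q}+(b-b_{2Q})\mathbf{1}_{\R^n\setminus 2Q}+b_{2Q}$, with the $L^2$ tent hypothesis plus John--Nirenberg handling the local piece, the kernel decay and the logarithmic growth of dyadic averages handling the far piece, and $R_t1\equiv 0$ killing the constant.
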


We shall also recall the following classical result of L. Carleson (see \cite{stein2}*{p. 236} or \cite{Grafakos2}*{Corollary 3.3.6}).

\begin{teo}\label{Feff-Ste}
    Let $\mu(x,t)$ be a Carleson measure on $\R^{n+1}_+$ and $0<p<\infty$. For every $\mu$-measurable function $F(x,t)$ on $\R^{n+1}_+$ the estimate
    \[
    \int_{\R^{n+1}_+}\abs{F(x,t)}^p\dd\mu(x,t)\lesssim\norm{\mu}_{\mathcal{C}}\int_{\R^n}(F^*(x))^p\dd x
    \]
    holds, where $F^*$ denotes the nontangential maximal function
    \[
    F^*(x):=\sup_{t>0}\sup_{\abs{x-y}<t} F(y,t), \quad x\in\R^n.
    \]
\end{teo}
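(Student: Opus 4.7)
The plan is to prove this by the standard tent-and-Whitney decomposition argument, reducing the full integral inequality to a distributional estimate on the Carleson measure of super-level sets of $F$.

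First, I would write the left-hand side by the layer-cake formula,
\[
    \int_{\R^{n+1}_+}\abs{F(x,t)}^p\,\dd\mu(x,t)=p\int_0^\infty\lambda^{p-1}\mu\bigl(\set{(x,t)\in\R^{n+1}_+:\abs{F(x,t)}>\lambda}\bigr)\,\dd\lambda,
\]
and similarly for the right-hand side with $|\{F^*>\lambda\}|$ in place of $\mu(\cdots)$. It therefore suffices to establish the pointwise-in-$\lambda$ bound
\[
    \mu\bigl(\set{(x,t):\abs{F(x,t)}>\lambda}\bigr)\lesssim\norm{\mu}_{\mathcal{C}}\,\bigl|\set{x\in\R^n:F^*(x)>\lambda}\bigr|,
\]
after which integrating in $\lambda$ closes the estimate.

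For the distributional inequality, I would exploit the geometric link between $F$ and $F^*$ encoded in the nontangential supremum. Denoting $E_\lambda=\set{x\in\R^n:F^*(x)>\lambda}$, the very definition of $F^*$ yields that whenever $\abs{F(x_0,t_0)}>\lambda$, every point $y$ with $\abs{y-x_0}<t_0$ satisfies $F^*(y)>\lambda$; in other words, the ball $B(x_0,t_0)$ is contained in $E_\lambda$. Since $E_\lambda$ is open (being a super-level set of a lower semicontinuous function), I would apply a Whitney-type decomposition $E_\lambda=\bigcup_j Q_j$ into essentially disjoint cubes with $\ell(Q_j)\thickapprox\mathrm{dist}(Q_j,E_\lambda^c)$. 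The containment $B(x_0,t_0)\subset E_\lambda$ forces $t_0\lesssim\ell(Q_j)$ for the cube $Q_j$ containing $x_0$, so $(x_0,t_0)\in T(cQ_j)$ for a fixed dilate $cQ_j$. Consequently,
\[
    \set{(x,t):\abs{F(x,t)}>\lambda}\subset\bigcup_j T(cQ_j),
\]
and the Carleson condition \eqref{ecu33} together with the bounded-overlap property of the dilates yields $\mu\bigl(\set{\abs{F}>\lambda}\bigr)\le\sum_j\mu(T(cQ_j))\lesssim\norm{\mu}_{\mathcal{C}}\sum_j\abs{Q_j}\lesssim\norm{\mu}_{\mathcal{C}}\abs{E_\lambda}$.

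The main obstacle is purely geometric: one must ensure that the Whitney cubes have sufficient comparability between their side length and the range of admissible heights $t_0$, and that the dilates $cQ_j$ have finite overlap so that the Carleson bound applied cube-by-cube sums to the measure of $E_\lambda$. Once that geometric step is in place, combining the distributional bound with the layer-cake identity immediately gives
\[
    \int_{\R^{n+1}_+}\abs{F(x,t)}^p\,\dd\mu(x,t)\lesssim\norm{\mu}_{\mathcal{C}}\,p\int_0^\infty\lambda^{p-1}\abs{E_\lambda}\,\dd\lambda=\norm{\mu}_{\mathcal{C}}\int_{\R^n}(F^*(x))^p\,\dd x,
\]
completing the proof.
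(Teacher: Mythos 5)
Your argument is correct and is precisely the standard proof of this classical result: the paper itself offers no proof, citing Stein and Grafakos, and the argument given there is exactly your reduction via the layer-cake formula to the distributional inequality $\mu(\set{\abs{F}>\lambda})\lesssim\norm{\mu}_{\mathcal{C}}\abs{\set{F^*>\lambda}}$, proved by observing that $\abs{F(x_0,t_0)}>\lambda$ forces $B(x_0,t_0)\subset\set{F^*>\lambda}$ and then covering by tents over Whitney cubes of that open set. The only cosmetic remark is that bounded overlap of the dilated cubes is not actually needed, since $\abs{cQ_j}=c^n\abs{Q_j}$ and the undilated Whitney cubes are already essentially disjoint.
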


We shall also need the following technical result 
shown in  \cite{Paper2}*{Proposition 4.11}. 
\begin{prop}\label{hardy_CM}
    Let $f\in H^1(\R^n)$ and $v(t,x)\in L^\infty((0,\infty)\times\R^n)$. Let $G(t,x)$ be a measurable function on $\R^{n+1}_+$, and assume that the measure defined by
    \[
    \dd\mu_G(t,x):=\abs{G(t,x)}^2\frac{\dd t}{t}\dd x
    \]
    is a Carleson measure with norm $\norm{\dd\mu_G}_{\mathcal{C}}$. Then we have that
    \[
    \abs{\int\!\!\!\!\int_0^\infty(Q_tf)(x)G(t,x)v(t,x)\frac{\dd t}{t}\dd x}\lesssim\norm{f}_{H^1(\R^n)}\norm{\dd\mu_G}_{\mathcal{C}}^{1/2}\norm{v}_{L^\infty((0,\infty)\times\R^n)}.
    \]
\end{prop}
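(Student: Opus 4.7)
My plan is to combine the Coifman-Meyer-Stein tent-space duality with the Lusin area characterization of $H^1(\R^n)$. First I would absorb the bounded factor $v$ into $G$: setting $\widetilde G(t,x):=G(t,x)v(t,x)$, the pointwise inequality $|\widetilde G|^2 \leq \|v\|_{L^\infty}^2 |G|^2$ shows that $\dd\mu_{\widetilde G}$ is still a Carleson measure with norm at most $\|v\|_{L^\infty}^2 \|\dd\mu_G\|_{\mathcal{C}}$, so the task reduces to proving
\[
\Big|\iint_0^\infty (Q_t f)(x)\,\widetilde G(t,x)\,\frac{\dd t}{t}\,\dd x\Big| \lesssim \|f\|_{H^1(\R^n)}\,\|\dd\mu_{\widetilde G}\|_{\mathcal{C}}^{1/2}.
\]

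The core step is the tent-space duality $(T^{1,2})^* = T^{\infty,2}$: for any Borel functions $F,H$ on $\R^{n+1}_+$ one has
\[
\Big|\iint_0^\infty F(t,x) H(t,x)\,\frac{\dd t}{t}\,\dd x\Big| \lesssim \int_{\R^n} S(F)(x)\, \dd x \cdot \|\dd\mu_H\|_{\mathcal{C}}^{1/2},
\]
where $S(F)(x):=(\iint_{\Gamma(x)} |F(t,y)|^2 \,{\dd t\,\dd y}/{t^{n+1}})^{1/2}$, $\Gamma(x) = \{(t,y): |x-y|<t\}$, and $\dd\mu_H := |H|^2\tfrac{\dd t}{t}\dd x$. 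Applied with $F(t,x) = (Q_tf)(x)$ and $H=\widetilde G$, it remains to check $\|S(Qf)\|_{L^1(\R^n)} \lesssim \|f\|_{H^1(\R^n)}$, which is the classical Lusin-area characterization of $H^1$, valid for Littlewood-Paley type operators $Q_t$ whose symbol vanishes at the origin.

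An elementary alternative bypasses tent-space machinery by reducing to atoms: write $f = \sum_j \lambda_j a_j$ with $\sum|\lambda_j|\lesssim \|f\|_{H^1}$ and prove the estimate uniformly for each $H^1$-atom $a$ supported in a cube $Q$. Over the tent $T(2Q)$, Cauchy-Schwarz together with $\iint_{T(2Q)} |Q_t a|^2 \tfrac{\dd t}{t}\dd x \lesssim \|a\|_{L^2}^2 \leq |Q|^{-1}$ and the Carleson bound $\iint_{T(2Q)}|\widetilde G|^2 \tfrac{\dd t}{t}\dd x\leq \|\dd\mu_{\widetilde G}\|_{\mathcal{C}}|2Q|$ yields a bound independent of $a$; outside $T(2Q)$ the cancellation $\int a = 0$ produces rapid decay of $Q_t a$ that can be integrated directly. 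The hardest point either way is the quadratic-to-linear passage: the integrand is linear in $G$, so Theorem \ref{Feff-Ste} does not apply directly to $\dd\mu_G$, and the quadratic Carleson information must instead be dualised against the area function of $Q_t f$. In the tent-space proof this takes the form of a Carleson stopping-time argument, while in the atomic approach the same difficulty reappears as the off-diagonal estimate outside $T(2Q)$, where the vanishing moment of $a$ and the smoothness of $Q_t$ have to be combined carefully.
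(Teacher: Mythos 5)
The paper does not actually prove Proposition \ref{hardy_CM}; it imports it verbatim from \cite{Paper2}*{Proposition 4.11}, so there is no in-text argument to compare against line by line. Your tent-space route is nevertheless a correct and essentially self-contained proof of the statement: the reduction $\widetilde G=Gv$ with $\norm{\dd\mu_{\widetilde G}}_{\mathcal C}\leq\norm{v}_{L^\infty}^2\norm{\dd\mu_G}_{\mathcal C}$ is immediate, the Coifman--Meyer--Stein duality $(T^{1,2})^*=T^{\infty,2}$ gives exactly the pairing bound $\lesssim\norm{S(Qf)}_{L^1}\norm{\dd\mu_{\widetilde G}}_{\mathcal C}^{1/2}$, and $\norm{S(Qf)}_{L^1}\lesssim\norm{f}_{H^1}$ holds because $\widehat\psi$ is supported in an annulus, so $Q_t$ is of Littlewood--Paley type. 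This is the same circle of ideas used in the cited source. You are also right that Theorem \ref{Feff-Ste} cannot be applied directly, since the pairing is linear in $G$ while the Carleson hypothesis is quadratic; the quadratic-to-linear passage must go through Cauchy--Schwarz on cones (hidden inside the tent-space duality) or, in your atomic variant, through the local $L^2$ bound on $T(2Q)$ plus off-diagonal decay from the vanishing moment of the atom. The atomic alternative is viable but, as you note, the exterior region requires summing over dyadic dilates $T(2^{k+1}Q)\setminus T(2^kQ)$ with the Carleson bound on each dilated tent; that detail would need to be written out for a complete proof, whereas the tent-space version is already complete modulo standard references.
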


\subsection{Admissible weights}\label{sect:admissible_w}
The endpoint results obtained in this article involve some function spaces introduced in \cite{Paper1}. We shall first recall their definition (see \cite{Paper1}*{Definition 4.2}), and for convenience, gather together in Proposition \ref{prop:gather} below, some of their properties discussed in that paper.

\begin{defi}\label{X_w}
    Let $w:(0,\infty)\rightarrow (0,\infty)$ be a weight function satisfying the following properties:
        \begin{enumerate}[label=\Roman*)]
            \item \label{I}For every {compact} interval $I\subseteq(0,\infty)$ we have that
                  \[
                    0<\inf_{t\in I}\left(\inf_{s>0}\frac{w(st)}{w(s)}\right)\leq\sup_{t\in I}\left(\sup_{s>0}\frac{w(st)}{w(s)}\right)<\infty;
                  \]
            \item \label{II} There exists $N>0$ such that $\sup_{t>0}w(t)(1+1/t)^{-N}<\infty$;
            \item \label{III} $\inf_{t>0} w(t)>0$.
        \end{enumerate}
    Let $\phi$ be a Schwartz function with frequency support inside a ball centred at the origin, and set $P_tf:=\widehat{\phi}(tD)f$. Then $X_w(\R^n)$ is defined to be the set of all locally integrable functions for which
    \[
        \norm{f}_{X_w(\R^n)}:=\norm{f}_{\BMO(\R^n)}+\sup_{t>0}\frac{\norm{P_tf}_{L^\infty(\R^n)}}{w(t)}<\infty.
    \]
\end{defi}
\begin{prop}\label{prop:gather} Let $w$ be a weight function satisfying \ref{I},\ref{II} and \ref{III}, and a function $\phi$ as above. 
\begin{enumerate}
    \item The definition of the space $X_w(\R^n)$ does not depend on the different choices of function $\phi$, in the sense that different choices induce equivalent norms.
    \item The embeddings $L^\infty(\R^n)\subset X_w(\R^n)\subset \bmo(\R^n)$ hold.
    \item If $w\approx 1$, then $X_w(\R^n)=L^\infty(\R^n)$ with equivalent norms.
    \item For $w(t)=1+\log_+ (1/t)$, we have that $X_w(\R^n)=\bmo(\R^n)$ with equivalent norms.
\end{enumerate}
\end{prop}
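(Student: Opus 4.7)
The plan is to address the four claims in order, relying on standard Littlewood--Paley reproducing formulas and classical characterisations of $\BMO(\R^n)$ and $\bmo(\R^n)$, together with the hypotheses \ref{I}--\ref{III} on the weight $w$.

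For (1), given two admissible Schwartz functions $\phi_1,\phi_2$, the approach is to express each frequency-localisation in terms of the other. By a standard reproducing-formula manoeuvre one produces an identity of the form $P_t^{\phi_1} f = K^{\phi_1,\phi_2}_t\ast P_{ct}^{\phi_2} f$ for some fixed $c>0$ and a Schwartz kernel $K^{\phi_1,\phi_2}_t$ whose $L^1$ norm is bounded uniformly in $t$. Young's inequality then gives $\|P_t^{\phi_1}f\|_{L^\infty(\R^n)}\lesssim\|P_{ct}^{\phi_2}f\|_{L^\infty(\R^n)}$, and property \ref{I} yields $w(t)\approx w(ct)$ uniformly in $t>0$, whence the two candidate norms are comparable.

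For (2), the inclusion $L^\infty(\R^n)\subset X_w(\R^n)$ is immediate: $\|f\|_{\BMO(\R^n)}\lesssim\|f\|_{L^\infty(\R^n)}$ and $\|P_t f\|_{L^\infty(\R^n)}\le\|\phi\|_{L^1(\R^n)}\|f\|_{L^\infty(\R^n)}$, while property \ref{III} bounds $1/w(t)$ uniformly. For $X_w(\R^n)\subset\bmo(\R^n)$, the first supremum in \eqref{Zipi} is already dominated by $\|f\|_{\BMO(\R^n)}$. For the second, I would decompose $f=P_1 f + (f-P_1f)$; property \ref{II} makes $w(1)$ finite, so $\|P_1 f\|_{L^\infty(\R^n)}\lesssim w(1)\|f\|_{X_w(\R^n)}$, while the averages of $|f-P_1 f|$ over cubes of side at least $1$ are controlled by $\|f\|_{\BMO(\R^n)}$ via a standard telescoping John--Nirenberg argument.

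For (3), the inclusion $L^\infty(\R^n)\subset X_w(\R^n)$ is from (2). Conversely, when $w\approx 1$, membership in $X_w(\R^n)$ gives $\sup_t\|P_t f\|_{L^\infty(\R^n)}\lesssim\|f\|_{X_w(\R^n)}$. Using (1) to normalise $\phi$ so that $\widehat{\phi}(0)=1$, and observing that $f\in\bmo(\R^n)$ is locally integrable, Lebesgue differentiation yields $P_t f\to f$ almost everywhere as $t\to 0^+$, and the uniform $L^\infty$ bound on $P_t f$ passes to $f$, giving $\|f\|_{L^\infty(\R^n)}\lesssim\|f\|_{X_w(\R^n)}$. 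For (4), the weight $w(t)=1+\log_+(1/t)$ clearly satisfies \ref{I}--\ref{III}, so $X_w(\R^n)\subset\bmo(\R^n)$ by (2). Conversely, for $f\in\bmo(\R^n)$ the inequality $\|f\|_{\BMO(\R^n)}\le\|f\|_{\bmo(\R^n)}$ is trivial and $\|P_t f\|_{L^\infty(\R^n)}\lesssim\|f\|_{\bmo(\R^n)}$ for $t\ge 1$ follows from the Schwartz decay of $\phi$ combined with the large-cube supremum in \eqref{Zipi}. For $0<t<1$, writing $P_t f - P_1 f = \int_t^1 Q_s f\,\frac{\dd s}{s}$ for a suitable $Q_s$ whose convolution kernel has mean zero gives the uniform bound $\|Q_s f\|_{L^\infty(\R^n)}\lesssim\|f\|_{\BMO(\R^n)}$, and integration produces the classical logarithmic estimate $\|P_t f - P_1 f\|_{L^\infty(\R^n)}\lesssim\log(1/t)\|f\|_{\BMO(\R^n)}$, which combines into $\|P_t f\|_{L^\infty(\R^n)}\lesssim(1+\log_+(1/t))\|f\|_{\bmo(\R^n)}$. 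The main subtlety is in (2), since the large-cube averages in the $\bmo$ norm are not directly visible in either summand of the $X_w$ norm and must be extracted via the decomposition $f=P_1 f+(f-P_1 f)$ together with the telescoping step described above.
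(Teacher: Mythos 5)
The paper does not actually prove this proposition: it is explicitly a list of facts ``gathered together'' from \cite{Paper1} (Definition 4.2 and the surrounding discussion there), so there is no in-paper argument to compare against. Judged on its own, your reconstruction follows the standard lines and is essentially sound: the Goldberg-type decomposition $f=P_1f+(f-P_1f)$ for the large-cube part of \eqref{Zipi} in (2), the a.e.\ convergence $P_tf\to f$ in (3), and the classical estimates $\|P_tf\|_{L^\infty(\R^n)}\lesssim\|f\|_{\bmo(\R^n)}$ for $t\ge 1$ together with $\|Q_sf\|_{L^\infty(\R^n)}\lesssim\|f\|_{\BMO(\R^n)}$ (mean-zero kernel acting on $\BMO$) integrated over $t\le s\le 1$ in (4) are all correct and are the expected ingredients.

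The one point you must make explicit is a normalisation of $\phi$, say $\widehat{\phi}(0)=1$ (equivalently $\int\phi=1$). As restated in this paper, Definition \ref{X_w} only asks that $\widehat{\phi}$ be supported in a ball, and under that literal reading part (1) is false ($\phi\equiv 0$ would be admissible and would collapse the norm to $\|\cdot\|_{\BMO(\R^n)}$). Your argument uses the normalisation in several places: the division $\widehat{\phi_1}(t\xi)/\widehat{\phi_2}(ct\xi)$ in (1) needs $\widehat{\phi_2}$ nonvanishing on a neighbourhood of the origin containing $c\cdot\mathrm{supp}\,\widehat{\phi_1}$; the identity $f-\phi\ast f=\int\phi(y)(f(\cdot)-f(\cdot-y))\,\dd y$ underlying the ``telescoping'' bound on $|f_Q-(P_1f)_Q|$ in (2) needs $\int\phi=1$; and so do the approximate-identity step in (3) and the mean-zero property of $Q_s$ in (4). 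Invoking (1) ``to normalise $\phi$'' in (3) is circular if the normalisation is not already part of the definition. With $\widehat{\phi}(0)=1$ assumed (as it is in \cite{Paper1}), everything goes through. A minor remark on (1): the more economical route is to note that $(\phi_1-\phi_2)_t$ has mean zero, so $\|P_t^{\phi_1}f-P_t^{\phi_2}f\|_{L^\infty(\R^n)}\lesssim\|f\|_{\BMO(\R^n)}\lesssim w(t)\|f\|_{\BMO(\R^n)}$ by \ref{III}; this avoids dividing Fourier transforms and the auxiliary dilation parameter $c$, though your version also works once the normalisation is in place.
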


\begin{rem} 
        Given a weight satisfying \ref{I},  \ref{II} and \ref{III} above, and given any positive constant $c>0$, the weight $cw$ satisfies the same properties, and $X_w(\R^n)=X_{cw}(\R^n)$ with equivalence of norms. So, without loss of generality, multiplying $w$ by a constant, one can  assume that $w\geq 1$. This is how the condition \ref{III} is stated in \cite{Paper1}*{Definition 4.2}. Moreover, the class of weights satisfying these conditions were called admissible in that paper. Here, we will reserve that terminology for those weights defined below. 

    Note also that, as a direct consequence of \ref{I},  \ref{II} and \ref{III}, $w$ also satisfies that for all $0<c_1\leq c_2$, there exist $0<d_1\leq d_2$ such that
    \begin{equation}
         \label{equivalence}
         c_1\leq \frac{t}{s}\leq c_2\quad \mbox{implies that}\quad  d_1\leq \frac{w(t)}{w(s)}\leq d_2. \end{equation}
\end{rem}

The following definition is a minor modification of that in the paper of A. Caetano and S. Moura \cite{Caetano-Moura}*{Definition 2.1}, that we shall adopt hereafter in this paper. 
\begin{defi}%
    Let $w:(0,1]\rightarrow (0,\infty)$ be a monotonic function, and extend it to $w:(0,\infty)\to (0,\infty)$ by defining $w(t)=w(1)$ for all $t\geq 1$. We say that $w$ is an \textit{admissible} weight if it satisfies that there exist $c,d>0$ such that for all $j\geq 0$ 
    \begin{equation}\label{log:doubling}
        cw(2^{-j})\leq  w(2^{-2j}) \leq d w(2^{-j}).
    \end{equation}
\end{defi}
\begin{exam}\label{exe-logb}
Example of admissible weights are those functions of the form 
    \begin{equation*}
      w_b(t)=\left(1+\log_+(1/t)\right)^b, \quad b\in \R.
    \end{equation*} 
    
    Indeed, it was observed in \cite{Caetano-Moura}*{Example 2.2} that weights defined on $(0,1]$ by an expression of the form $\abs{\log cx}^b$ are admissible, provided $c\in (0,1]$. Note now that for $t\in (0,1)$, we can write 
    \[
        \abs{\log(t/e)}^b=(1+\log(1/t))^b.
    \]
\end{exam}

\begin{lem} Let $w$ be an admissible weight and let  $\Theta:(0,\infty)\to (0,\infty)$ be a monotonic function satisfying \eqref{equivalence}. Then $\Theta(w(t))$ is also an admissible weight. In particular, $w^{-1}$, and $cw$ for all constant $c>0$ are also admissible. 
\end{lem}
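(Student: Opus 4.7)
The plan is to verify the two defining properties of admissibility directly for the composition $\Theta\circ w$: monotonicity on $(0,1]$ together with the correct constant extension to $[1,\infty)$, and the logarithmic doubling condition \eqref{log:doubling}. Since everything is a routine unwinding of definitions, I do not expect a genuine obstacle; the only care required is to correctly propagate the constants through \eqref{equivalence}.

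The monotonicity of $\Theta\circ w$ on $(0,1]$ is immediate from the monotonicity of each factor (the composition of two monotonic functions is monotonic). Moreover, the extension requirement is automatic: since $w(t)=w(1)$ for all $t\geq 1$, we get $(\Theta\circ w)(t)=\Theta(w(1))=(\Theta\circ w)(1)$ on $[1,\infty)$. Positivity is guaranteed because $\Theta$ takes values in $(0,\infty)$ and $w(t)\in(0,\infty)$.

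For the doubling condition, the admissibility of $w$ supplies constants $c,d>0$ with $c\leq w(2^{-2j})/w(2^{-j})\leq d$ for every $j\geq 0$. Applying the assumption \eqref{equivalence} on $\Theta$ with $c_1=c$ and $c_2=d$, I obtain $d_1,d_2>0$ such that $d_1\leq \Theta(w(2^{-2j}))/\Theta(w(2^{-j}))\leq d_2$ for all $j\geq 0$, which is exactly \eqref{log:doubling} for $\Theta\circ w$.

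The two distinguished cases follow once I check that $\Theta(x)=1/x$ and $\Theta(x)=cx$ (with $c>0$ fixed) are monotonic maps of $(0,\infty)$ into itself satisfying \eqref{equivalence}. Indeed, $c_1\leq t/s\leq c_2$ gives $1/c_2\leq (1/t)/(1/s)\leq 1/c_1$ in the first case, and $(ct)/(cs)=t/s\in[c_1,c_2]$ in the second, so in both situations admissible $d_1,d_2$ exist. Applying the general statement then yields that $w^{-1}$ and $cw$ are admissible.
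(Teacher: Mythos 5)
Your proof is correct and follows essentially the same route as the paper: monotonicity of the composition comes from monotonicity of each factor, and the doubling condition \eqref{log:doubling} for $\Theta\circ w$ is obtained by feeding the ratio bounds $c\leq w(2^{-2j})/w(2^{-j})\leq d$ into \eqref{equivalence} for $\Theta$; the special cases are handled by verifying \eqref{equivalence} for $\Theta(x)=1/x$ and $\Theta(x)=cx$. You are somewhat more explicit than the paper (which states these steps in one line), in particular in checking the constant extension on $[1,\infty)$, but there is no substantive difference.
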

\begin{proof}
Note that the monotonicity of $w$ and $\Theta$, yields the monotonocity of the composition. 
Moreover, \eqref{log:doubling} and \eqref{equivalence}, imply that $\Phi(w(t))$ also satisfies \eqref{log:doubling}.

The last part of the statement easily follows from the first part by taking $\Theta(t)$ equal to $t^{-1}$, and $ct$ respectively.
\end{proof}

\begin{rem}\label{remloglog} Using the previous lemma and the example above, one can construct other admissible weights such as 
\[
    w(t):=(1+\log_+(1/t))^{b_1}\brkt{1+\log (1+\log_+(1/t))}^{b_2},
\]
with $b_1\cdot b_2\geq 0$. 
\end{rem}
\begin{lem}
    Let $w$ be an admissible weight. Then $w$ satisfies \ref{I} and \ref{II} in Definition \ref{X_w} above. Moreover, condition \ref{III} holds for an admissible weight $w$ if, and only if, $w$ is either non-increasing, or satisfying that for all $t>0$, $w(t)\approx 1$.
\end{lem}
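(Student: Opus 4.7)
The plan is to treat the three conditions separately, with most of the work going into \ref{I}.

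For \ref{II}, only $t\in(0,1)$ needs attention since $w(t)=w(1)$ on $[1,\infty)$. The non-decreasing case is immediate: $w(t)\leq w(1)$. In the non-increasing case, iterating the admissibility inequality from $j=1$ yields $w(2^{-2^k})\leq d^k w(1/2)$, and for $t\in(0,1/2]$ choosing the smallest $k$ with $2^{-2^k}\leq t$ gives $k\lesssim\log_2\log_2(1/t)$; monotonicity then produces $w(t)\lesssim(\log(1/t))^{\log_2 d}$, which is majorised by $(1+1/t)^N$ for any $N>0$.

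For \ref{I}, the first step is a slow-variation lemma: $w(2^{-j-1})\approx w(2^{-j})$ uniformly in $j\geq 0$. Setting $f(j):=\log w(2^{-j})$, admissibility reads $|f(2j)-f(j)|\leq K:=\log\max(d,1/c)$. Given $j\geq 1$, pick $N\geq 0$ with $2^N\leq j\leq 2^{N+1}-1$; then both $j$ and $j+1$ lie in $[2^N,2^{N+1}]$, and since $f$ is monotonic one application of admissibility yields $|f(j+1)-f(j)|\leq|f(2^{N+1})-f(2^N)|\leq K$ (the case $j=0$ is trivial). With this in hand, for $\lambda\in[1,2]$ and $s\in(0,1/\lambda]$, pick $j$ with $2^{-j-1}<s\leq 2^{-j}$; monotonicity sandwiches both $w(s)$ and $w(\lambda s)$ between consecutive dyadic values of $w$, and the slow-variation lemma (applied at most twice) delivers $w(\lambda s)\approx w(s)$ with uniform constants. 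The remaining sub-cases (namely $s\geq 1$ or $1/\lambda\leq s\leq 1$, and $\lambda$ outside $[1,2]$) are dispatched using the constancy of $w$ on $[1,\infty)$, finitely many iterations for $\lambda>2$, and the substitution $s\mapsto\lambda^{-1}s$ for $\lambda<1$; this yields uniform bounds for $\lambda$ in any compact subinterval of $(0,\infty)$.

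For \ref{III}: if $w$ is non-increasing on $(0,1]$, then $w(t)\geq w(1)>0$ on $(0,1]$ and $w(t)=w(1)$ on $[1,\infty)$, so $\inf_{t>0}w(t)\geq w(1)>0$; if $w\approx 1$ the claim is immediate. Conversely, assume \ref{III} and that $w$ is not non-increasing; monotonicity then forces $w$ to be non-decreasing on $(0,1]$, giving $w(t)\leq w(1)$ for all $t>0$, which together with $w(t)\geq \inf_{t>0}w(t)>0$ yields $w\approx 1$.

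The main obstacle is the slow-variation lemma in \ref{I}: admissibility only constrains the ratio $w(t^2)/w(t)$ at the sparse set of points $t=2^{-j}$, and upgrading this to the doubling-type estimate $w(2^{-j-1})\approx w(2^{-j})$ needs the nested-interval argument above, which crucially relies on the monotonicity of $w$.
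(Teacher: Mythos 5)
Your proof is correct, but it takes a more self-contained route than the paper. The paper's argument leans entirely on \cite{Caetano-Moura}*{Lemma 2.3}, which provides the quantitative bounds $1\leq w(t)/w(1)\lesssim(1+\abs{\log t})^{b}$ for $0<t\leq 1$; conditions \ref{I} and \ref{II} are then read off from the resulting two-sided estimate $(1+\log_+t)^{-b}\lesssim\inf_{s>0}w(ts)/w(s)\leq\sup_{s>0}w(ts)/w(s)\lesssim(1+\log_+1/t)^{b}$, and the treatment of \ref{III} is identical to yours. You instead prove everything from scratch: your key step is the slow-variation estimate $w(2^{-j-1})\approx w(2^{-j})$, obtained by sandwiching consecutive integers $j,j+1$ in a dyadic block $[2^N,2^{N+1}]$ and using monotonicity of $f(j)=\log w(2^{-j})$ together with \eqref{log:doubling} applied at $j=2^N$ (with the $j=0$ ratio absorbed as a fixed constant); this is a clean and correct way to upgrade the sparse constraint at $t=2^{-j}$ to a doubling bound, and it yields \ref{I} directly in the form required (uniform comparability over compact intervals of $\lambda$). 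Your iteration $w(2^{-2^k})\leq d^k w(1/2)$ for \ref{II} is essentially the internal mechanism of the cited lemma, reproduced explicitly, and gives the same poly-logarithmic growth. What the paper's approach buys is brevity and the sharper explicit rate $(1+\abs{\log t})^{\pm b}$; what yours buys is independence from the external reference and an argument that handles both monotonicity cases symmetrically rather than deferring the non-decreasing case to ``analogously.''
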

\begin{proof} 
We shall provide a proof of the first part of the statement for $w$ being non-increasing. The non-decreasing case is treated analogously.

We know that, by \cite{Caetano-Moura}*{Lemma 2.3}, there exists $b\geq 0$ such that for all $0<t\leq 1$ it holds that \begin{equation*}%
        1\leq \inf_{0<s\leq 1}\frac{w(ts)}{w(s)}\leq \frac{w(t)}{w(1)}\leq \sup_{0<s\leq 1}\frac{w(ts)}{w(s)}\lesssim(1+\abs{\log t})^b.
    \end{equation*}
    Since $w$ is constant on $[1,\infty)$, these inequalities imply
    \[
        1\lesssim w(t)\lesssim(1+\log_+ 1/t)^b,
    \]
    from where $w$ satisfies \ref{II}. 

    Using these two inequalities, and a change of variables in the case that $t\geq 1$, we deduce that for all $t>0$
   \[
          (1+\abs{\log t})^{-b}\lesssim\inf_{s>0}\frac{w(ts)}{w(s)}\leq\sup_{s>0}\frac{w(ts)}{w(s)}\lesssim(1+\abs{\log t})^b.
   \]
     Now, since $w$ is non-increasing, it follows that 
     \[
        \sup_{t>1} \sup_{s>0} \frac{w(st)}{w(s)}\leq 1\leq 
        \inf_{t<1} \inf_{s>0} \frac{w(st)}{w(s)}.
     \]
     These imply that
    \[
       (1+\log_+ t)^{-b}\lesssim \inf_{s>0}\frac{w(ts)}{w(s)}\leq \sup_{s>0}\frac{w(ts)}{w(s)}\lesssim (1+\log_+1/t)^b, 
    \]
    which yields that $w$ satisfies \ref{I}.
    
    In the non-increasing case, the condition $\inf_t w(t)>0$ is equivalent to say that for all $t>0$, $w(t)\geq w(1)>0$, which implies that $w$ satisfies \ref{III}. If $w$ is non-decreasing, the condition $\inf_{t>0} w(t)=w(0^+)>0$, is equivalent to the property that $w(t)\approx 1$ for all $t>0$.
\end{proof}

\begin{rem}
   Observe that if a non-decreasing admissible weight $w$ satisfies $\inf_{t>0} w(t)=w(0^+)>0$ then one has that $w(t)\approx 1$. Hence, by Proposition \ref{prop:gather}, the space $X_w(\R^n)$ in  Definition \ref{X_w} coincides with $L^\infty(\R^n)$. 
\end{rem}

\subsection{Generalised smoothness-type spaces}\label{generalised_smoothness_spaces}
The rest of this section is devoted to the definition and main properties of the spaces of generalised smoothness that appear in the main result of this paper.

To this end, let $\varphi_0$ be a positive and radially monotonically decreasing Schwartz function, supported in $\lbrace\abs{x}\leq 3/2\rbrace$, which is identically one on $\lbrace\abs{x}\leq 1\rbrace$. We define then $\varphi_1(x):=\varphi_0(x/2)-\varphi_0(x)$ and $\varphi_j(x):=\varphi_1(2^{-j+1}x)$ for $x\in\R^n$ and $j$ any integer bigger than one. In particular, it holds that $\sum_{j=0}^\infty\varphi_j(x)=1$ for all $x\in\R^n$, and the collection of functions $(\varphi_j)_{j\geq 0}$ forms a resolution of the unity.

The following definition can be implicitly found in \cite{Caetano-Moura}.

\begin{defi}\label{Regularised_weight}
Let $w$ be an admissible weight, and let  $(\varphi_j)_{j\geq 0}$ be a resolution of unity as above. We say that the function 
\begin{equation}\label{log_symbol}
    \Rw(\xi)=\sum_{j=0}^\infty w(2^{-j})\varphi_j(\xi),\quad \xi\in\R^n,
\end{equation}
 is the regularisation of $w$ (associated to the resolution of unity $(\varphi_j)_{j\geq 0}$).
\end{defi}

It was shown in \cite{Caetano-Moura}*{Lemma 3.1}, that both $\Rw$ and $1/\Rw$ are smooth functions on $\R^n$ such that for all multi-index $\alpha\in\N^n$, and for all $\xi\in \R^n$
\begin{equation}\label{ecu1sym}
    \abs{(\partial^\alpha\Rw)(\xi)}\lesssim w(1/\langle\xi\rangle)\langle\xi\rangle^{-\abs{\alpha}}
\end{equation}
and
\begin{equation}\label{ecu2sym}
    \abs{\left(\partial^\alpha\left(\frac{1}{\Rw}\right)\right)(\xi)}\lesssim \frac{1}{w(1/\langle\xi\rangle)}\langle\xi\rangle^{-\abs{\alpha}}.
\end{equation}
 In addition, using \eqref{log_symbol} and the fact that $w(1/\abs{\xi})=w(1)$ for all $\abs{\xi}\leq 1$ we obtain the estimates
\begin{equation}\label{sigma_equiv_weight}
    \Rw(\xi)\approx w(1/\abs{\xi})\approx w(1/\langle\xi\rangle),\quad \xi\neq 0.
\end{equation}
This motivates the terminology of regularisation, as $\Rw$ is smooth and also essentially encodes all the point-wise information of $w$.

\begin{defi}
    For $1\leq p\leq \infty$, let $X^p$ denote either $L^p(\R^n)$ if $1<p<\infty$, and $X^1=H^1(\R^n)$ or $X^\infty=\BMO(\R^n)$ in the case $p=1$ and $p=\infty$ respectively.
\end{defi}

\begin{lem}\label{lem:KN1}
Let $\sigma\in \mathcal{C^\infty}(\R^n)$ belonging to the Kohn-Nirenberg class $S^0(\R^n)$. That is, it satisfies that for all multi-index $\alpha\in\N^n$ 
    \[
        \sup_{\xi\in\R^n} \esc{\xi}^{\abs{\alpha}}\abs{\partial^\alpha_\xi\Symb(\xi)}<+\infty.
    \]
Then, for all $1\leq p\leq \infty$, $\sigma(D):X^p\to X^p$
is a bounded operator.
\end{lem}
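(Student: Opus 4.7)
The plan is to split the symbol into a low-frequency piece, handled by a Schwartz convolution, and a high-frequency piece that is a classical Mihlin-H\"ormander multiplier. Concretely, I would pick $\psi\in C_c^\infty(\R^n)$ with $\psi\equiv 1$ in a neighbourhood of the origin, and decompose $\Symb=\Symb_0+\Symb_\infty$ with $\Symb_0=\Symb\psi\in C_c^\infty(\R^n)$ and $\Symb_\infty=\Symb(1-\psi)$. Correspondingly $\Symb(D)=T_0+T_\infty$, and it suffices to show that each of $T_0$ and $T_\infty$ is bounded on $X^p$ for every $p\in[1,\infty]$.

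Since $\Symb_0$ is a compactly supported smooth function, its inverse Fourier transform $K_0$ is a Schwartz function, and $T_0f=K_0\ast f$. By Young's inequality together with the standard stability of $H^1(\R^n)$, $L^p(\R^n)$ (for $1<p<\infty$), and $\BMO(\R^n)$ under convolution against a Schwartz kernel, it follows at once that $T_0$ is bounded on every $X^p$.

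For $T_\infty$ I would use that $\Symb_\infty$ is smooth on $\R^n$, vanishes near the origin, and inherits from the $S^0$ hypothesis the estimate $\abs{\partial^\alpha \Symb_\infty(\xi)}\lesssim \abs{\xi}^{-\abs{\alpha}}$ for every multi-index $\alpha$ and every $\xi\neq 0$, i.e. the classical Mihlin-H\"ormander condition. From this I would conclude in turn: (i) boundedness on $L^p(\R^n)$ for $1<p<\infty$ by Mihlin's multiplier theorem; (ii) that the convolution kernel of $T_\infty$ satisfies standard Calder\'on-Zygmund size and regularity bounds, so that $T_\infty$ is a Calder\'on-Zygmund convolution operator, hence bounded on $H^1(\R^n)$ via the Hardy-space version of the multiplier theorem (equivalently, by checking that $T_\infty$ maps $H^1$-atoms into $H^1$-molecules with uniform control); (iii) boundedness on $\BMO(\R^n)$ by $H^1$-$\BMO$ duality, noting that the $L^2$-adjoint of $T_\infty$ has symbol $\overline{\Symb_\infty}$, which again belongs to $S^0$ and vanishes near the origin, so the argument in (ii) applies to the adjoint.

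The step I expect to be the most delicate is the $H^1$ endpoint. Here one must verify not merely that the kernel enjoys Calder\'on-Zygmund size/smoothness bounds but that the precise cancellation and decay of $T_\infty$ applied to an atom produce an object with integrable non-tangential maximal function. This is exactly where the smoothness of $\Symb$, together with the full $S^0$ derivative control giving Schwartz-like decay of the kernel off the diagonal, is essential; every other ingredient of the proof is a direct quotation of well-known results.
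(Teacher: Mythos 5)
Your argument is correct and follows essentially the same route as the paper, whose proof simply cites the Mihlin--H\"ormander multiplier theorem for $L^p$, the $H^1$ multiplier theorem of Garc\'{\i}a-Cuerva and Rubio de Francia, and $H^1$--$\BMO$ duality applied to the adjoint symbol $\overline{\sigma}\in S^0(\R^n)$. The only cosmetic difference is your low/high frequency splitting, which is in fact unnecessary: since $\esc{\xi}\geq\abs{\xi}$, the $S^0$ bounds already give the classical Mihlin condition $\abs{\partial^\alpha\sigma(\xi)}\lesssim\abs{\xi}^{-\abs{\alpha}}$ for $\xi\neq 0$ directly.
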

\begin{proof}
The boundedness on $L^p(\R^n)$ for $1<p<\infty$ can be found in \cite{Grafakos1}*{Theorem 6.2.7} and that on $H^1(\R^n)$ can be found in \cite{Cuerva-Francia}*{Theorem III.7.30}). Duality and self adjointness, yield the boundedness on $\BMO(\R^n)$. 
\end{proof}

\begin{defi} Let $w$ be an admissible weight, and let $\Rw$ denote its regularisation given by \eqref{log_symbol}. We define the linear operators
\begin{equation*}%
\J_{\Rw}f:=\Rw(D)f \quad \mathrm{and} \quad \J_{\Rw^{-1}}f:=\Rw^{-1}(D)f,
\end{equation*}
defined initially for $f\in\SW(\R^n)$. 
\end{defi}

\begin{rem} In the last statement, and hereafter, we write $\Rw^{-1}(\xi)$ to denote the function $1/\Rw(\xi)$.
\end{rem}
\begin{prop}\label{prop:221} Let $w$ be an admissible weight, and let $\Rw$ be its  regularisation. 
\begin{enumerate}
    \item The operators $\J_{\Rw}$ and $\J_{\Rw^{-1}}$ are linear and continuous on $\SW(\R^n)$ and $\SW'(\R^n)$, being each other inverses.
    \item If $\inf_{t>0} w(t)>0$, then  $\Rw^{-1}\in S^0(\R^n)$, and so, for all $1\leq p\leq \infty$, 
    \[
        \J_{\Rw^{-1}}:X^p\to X^p
    \]
    is a bounded operator.
\end{enumerate}
\end{prop}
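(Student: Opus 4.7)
The plan is to prove the two items separately.

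For item (1), the key observation is that both $\Rw$ and $\Rw^{-1}$ are smooth Fourier multipliers whose derivatives of every order grow at most polynomially. For $\Rw$ this is immediate from \eqref{ecu1sym} combined with condition \ref{II} of Definition \ref{X_w}, which furnishes a polynomial upper bound on $w(1/\langle\xi\rangle)$. For $\Rw^{-1}$ I would invoke \eqref{ecu2sym} together with the fact, extracted from the preceding lemmas, that an admissible weight $w$ obeys a logarithmic two-sided control, so that $1/w(1/\langle\xi\rangle)$ grows at most logarithmically. These estimates are enough to guarantee that multiplication by $\Rw$ and by $\Rw^{-1}$ preserves $\SW(\R^n)$ continuously; composing with the Fourier transform, which is an automorphism of $\SW(\R^n)$, one concludes that $\J_\Rw$ and $\J_{\Rw^{-1}}$ are continuous endomorphisms of $\SW(\R^n)$. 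Their extensions to $\SW'(\R^n)$ are defined by transposition and remain continuous. The inverse relation $\J_\Rw\circ \J_{\Rw^{-1}}=\mathrm{Id}=\J_{\Rw^{-1}}\circ \J_\Rw$ is then a direct consequence of the pointwise identity $\Rw(\xi)\cdot \Rw^{-1}(\xi)=1$, verified first on $\SW(\R^n)$ after passing to the Fourier side, and extended to $\SW'(\R^n)$ by transposition (or by density).

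For item (2), the hypothesis $\inf_{t>0}w(t)=c>0$ gives $w(1/\langle\xi\rangle)\geq c$ uniformly in $\xi\in\R^n$. Inserting this lower bound into \eqref{ecu2sym} yields
\[
    \abs{\partial^\alpha_\xi \Rw^{-1}(\xi)}\lesssim\langle\xi\rangle^{-\abs{\alpha}},\qquad \alpha\in\N^n,
\]
which is precisely the Kohn-Nirenberg estimate defining $S^0(\R^n)$. The boundedness of $\J_{\Rw^{-1}}$ on $X^p$ for every $1\leq p\leq\infty$ then follows immediately from Lemma \ref{lem:KN1}.

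I do not expect any serious obstacle. The only point requiring a little care in item (1) is verifying the polynomial-growth condition for $\Rw^{-1}$ when $w$ is non-decreasing and degenerates at $0$, but this is taken care of by the logarithmic control on admissible weights established earlier. Everything else is routine manipulation of Fourier multipliers on $\SW(\R^n)$ and $\SW'(\R^n)$.
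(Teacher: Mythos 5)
Your proposal is correct and follows essentially the same route as the paper: continuity on $\SW(\R^n)$ and $\SW'(\R^n)$ from smoothness and polynomial growth of all derivatives of $\Rw$ and $\Rw^{-1}$, inverses via the pointwise identity of the symbols, and for item (2) the lower bound on $w$ inserted into \eqref{ecu2sym} to get $\Rw^{-1}\in S^0(\R^n)$ followed by Lemma \ref{lem:KN1}. You merely spell out in more detail than the paper why the derivatives of $\Rw^{-1}$ grow at most polynomially (the logarithmic two-sided control on admissible weights), which is a fair elaboration rather than a different argument.
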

\begin{proof}
Both operators are continuous on $\SW(\R^n)$ since $\Rw$, $1/\Rw$ are smooth and all their derivatives have at most polynomial growth. This yields that both $\J_{\Rw}$ and $\J_{\Rw^{-1}}$ are continuous on $\SW'(\R^n)$.  In addition, by the commutativity of Fourier multipliers it  follows that $\J_{\Rw}$ and $\J_{\Rw^{-1}}$ are each other inverses.

The assumption $\inf_{t>0} w(t)>0$ and \eqref{ecu2sym} imply that the symbol $\Rw^{-1}\in S^0(\R^n)$, and so the boundedness is a direct consequence of the previous lemma.
\end{proof}

\begin{defi}%

Let $w$ be an admissible weight and let $\Rw$ be its regularisation given by \eqref{log_symbol}. We define the space
\begin{equation*}%
    \J_{\Rw}(X^p):=\lbrace f\in\SW'(\R^n):\J_{\Rw^{-1}} f\in X^p\rbrace,
\end{equation*}
equipped with the norm
\[
    \norm{f}_{\J_{\Rw}(X^p)}:=\norm{\J_{\Rw^{-1}} f}_{X^p}.
\]
Similarly one defines 
\begin{equation*}
    \J_{\Rw^{-1}}(X^p):=\lbrace f\in\SW'(\R^n):\J_{\Rw} f\in L^p(\R^n)\rbrace,
\end{equation*}
equipped with the norm
\[
    \norm{f}_{\J_{\Rw^{-1}}(X^p)}:=\norm{\J_{\Rw} f}_{X^p}.
\]
\end{defi}

\begin{prop} Let $1\leq p\leq \infty$, and let $w$ be an admissible weight. The definition of the spaces $\J_{\Rw}(X^p)$ and $\J_{\Rw^{-1}}(X^p)$ is independent of the resolution of the unity chosen to regularise $w$.
\end{prop}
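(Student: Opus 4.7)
The plan is to show that if $\Rw_1$ and $\Rw_2$ are the regularisations of the same admissible weight $w$ associated to two (possibly different) resolutions of unity, then the norms they induce on each of the two spaces coincide up to equivalence. The crux is to verify that the ratio $\sigma:=\Rw_1/\Rw_2$ and its reciprocal both belong to the Kohn--Nirenberg class $S^0(\R^n)$; the conclusion will then follow from Lemma~\ref{lem:KN1} applied on $X^p$.

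To check that $\sigma\in S^0(\R^n)$, I would apply Leibniz's rule to expand $\partial^\alpha\sigma=\partial^\alpha(\Rw_1\cdot \Rw_2^{-1})$ as a finite sum of products of the form $(\partial^\beta \Rw_1)(\partial^\gamma \Rw_2^{-1})$ with $\beta+\gamma=\alpha$, and then invoke the pointwise estimates \eqref{ecu1sym} for $\Rw_1$ and \eqref{ecu2sym} for $\Rw_2^{-1}$. Each such term is controlled by
\[
w(1/\langle\xi\rangle)\langle\xi\rangle^{-\abs{\beta}}\cdot\frac{1}{w(1/\langle\xi\rangle)}\langle\xi\rangle^{-\abs{\gamma}}=\langle\xi\rangle^{-\abs{\alpha}},
\]
so $\sigma\in S^0(\R^n)$. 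Interchanging the roles of $\Rw_1$ and $\Rw_2$ yields $\sigma^{-1}\in S^0(\R^n)$ as well. It is worth emphasising that the weight $w(1/\langle\xi\rangle)$ cancels in the ratio, so this argument is valid for \emph{every} admissible weight, without any lower boundedness assumption of the form $\inf_{t>0}w(t)>0$.

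With the symbols in place, Lemma~\ref{lem:KN1} yields that both $\sigma(D)$ and $\sigma^{-1}(D)$ are bounded on $X^p$ for every $1\leq p\leq\infty$. Since composition of Fourier multipliers corresponds to multiplication of symbols at the level of $\SW'(\R^n)$, we have the identities $\Rw_2^{-1}(D)f=\sigma^{-1}(D)\bigl[\Rw_1^{-1}(D)f\bigr]$ and $\Rw_1^{-1}(D)f=\sigma(D)\bigl[\Rw_2^{-1}(D)f\bigr]$, which imply $\norm{\Rw_1^{-1}(D)f}_{X^p}\approx\norm{\Rw_2^{-1}(D)f}_{X^p}$. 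This gives the claimed independence for $\J_{\Rw}(X^p)$. The argument for $\J_{\Rw^{-1}}(X^p)$ is entirely analogous, now employing the multipliers $\sigma(D)$ and $\sigma^{-1}(D)$ to pass between $\Rw_1(D)f$ and $\Rw_2(D)f$. I do not anticipate any significant obstacle beyond the careful bookkeeping of the Leibniz expansion; the essential point is the cancellation of the weight factors in the ratio.
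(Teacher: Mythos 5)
Your proposal is correct and follows essentially the same route as the paper: the paper likewise uses the Leibniz rule together with \eqref{ecu1sym} and \eqref{ecu2sym} to show that $\Rw_2\Rw_1^{-1}$ and $\Rw_2^{-1}\Rw_1$ lie in $S^0(\R^n)$, and then concludes by Lemma~\ref{lem:KN1} and the commutativity of Fourier multipliers. Your observation that the weight factors cancel, so no lower bound on $w$ is needed, is consistent with the paper's statement of the proposition.
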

\begin{proof}
Let $({\varphi_j^1})_j$ and $({\varphi_j^2})_j$ be two resolutions of the unity, and let $\Rw_{1}$ and $\Rw_{2}$ be the corresponding regularisations of $w$. 

For all multi-index $\alpha$, the Leibniz rule, \eqref{ecu1sym} and \eqref{ecu2sym} yield
\[
    \abs{\partial^\alpha \brkt{\Rw_2\Rw_1^{-1}}}\lesssim \frac{w(1/\langle\xi\rangle)}{w(1/\langle\xi\rangle)} \esc{\xi}^{\alpha}\approx \esc{\xi}^{\alpha},
\]
and so $\Rw_2\Rw_1^{-1}\in S^0(\R^n)$. Analogously, one shows that $\Rw_2^{-1}\Rw_1\in S^0(\R^n)$.

Let us prove that $\J_{\Rw_1}(X^p)=\J_{\Rw_2}(X^p)$, by showing that their defining norms are equivalent. By the symmetry of the problem it is indeed enough to prove that $\J_{\Rw_2}(X^p)\subset \J_{\Rw_1}(X^p)$. Lemma \ref{lem:KN1} and the commutativity of Fourier multipliers yield 
\[
     \norm{\J_{\Rw_1^{-1}} f}_{X^p}= \norm{(\Rw_1^{-1}\Rw_2)(D) \Rw_2^{-1}(D) f}_{X^p}\lesssim \norm{\J_{\Rw_2^{-1}} f}_{X^p}
\]
for $f\in \J_{\Rw_2}(X^p)$, as we wanted to show. 

The independence of the definition of $\J_{\Rw^{-1}}(X^p)$ on the resolution of the identity is obtained analogously. So we omit the details.
\end{proof}
\begin{prop}\label{prop:inverse_w} Let $w$ be an admissible weight, and define the admissible weight $u=w^{-1}$. Let $\mathrm{w}$ and $\mathrm{u}$ denote respectively their  regularisation given by \eqref{log_symbol}. Then it holds that for all $1\leq p\leq \infty$, $\J_{\Rw}(X^p)=\J_{\mathrm{u}^{-1}}(X^p)$ with equivalent norms. Namely
\begin{equation*}
        \norm{\J_\mathrm{u} f}_{L^p(\R^n)}\approx\norm{\J_{\mathrm{w}^{-1}}f}_{L^p(\R^n)},
\end{equation*}
for all $f\in \J_{\Rw}(X^p)=\J_{\mathrm{u}^{-1}}(X^p)$
\end{prop}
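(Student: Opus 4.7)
The plan is to leverage the smooth symbolic calculus developed in the preceding propositions (in particular Lemma \ref{lem:KN1} and the pointwise bounds \eqref{ecu1sym}, \eqref{ecu2sym}) to upgrade a pointwise comparability of symbols to a norm equivalence. The starting observation is that by \eqref{sigma_equiv_weight} applied to both weights $w$ and $u = w^{-1}$, the regularisations satisfy
\[
    \mathrm{u}(\xi) \approx u(1/\langle\xi\rangle) = \frac{1}{w(1/\langle\xi\rangle)} \approx \mathrm{w}^{-1}(\xi),
\]
so the multipliers $\mathrm{u}(D)$ and $\mathrm{w}^{-1}(D)$ should act similarly.

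The strategy is to introduce the ``transfer'' symbols
\[
    \rho(\xi) := \mathrm{u}(\xi)\,\mathrm{w}(\xi), \qquad \tau(\xi) := \mathrm{u}^{-1}(\xi)\,\mathrm{w}^{-1}(\xi),
\]
and show that both belong to the Kohn-Nirenberg class $S^0(\R^n)$. This is where the key computation takes place: applying Leibniz's rule together with \eqref{ecu1sym} to $\mathrm{u}$ and $\mathrm{w}$ gives, for any multi-index $\alpha$,
\[
    |\partial^\alpha \rho(\xi)| \lesssim \sum_{\beta\leq\alpha}|\partial^\beta\mathrm{u}(\xi)|\,|\partial^{\alpha-\beta}\mathrm{w}(\xi)| \lesssim u(1/\langle\xi\rangle)\,w(1/\langle\xi\rangle)\,\langle\xi\rangle^{-|\alpha|} \approx \langle\xi\rangle^{-|\alpha|},
\]
so $\rho \in S^0(\R^n)$. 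The same computation with \eqref{ecu2sym} in place of \eqref{ecu1sym} yields $\tau \in S^0(\R^n)$, since the corresponding weight factors $1/w(1/\langle\xi\rangle)$ and $1/u(1/\langle\xi\rangle) = w(1/\langle\xi\rangle)$ again cancel. By Lemma \ref{lem:KN1}, both $\rho(D)$ and $\tau(D)$ are bounded on $X^p$ for every $1\leq p \leq \infty$.

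The final step is to exploit the commutativity of Fourier multipliers, together with the fact that $\J_{\mathrm{w}}$ and $\J_{\mathrm{w}^{-1}}$ (resp.\ $\J_{\mathrm{u}}$ and $\J_{\mathrm{u}^{-1}}$) are mutual inverses by Proposition \ref{prop:221}. This gives the identities
\[
    \mathrm{u}(D) f = \rho(D)\,\mathrm{w}^{-1}(D) f, \qquad \mathrm{w}^{-1}(D) f = \tau(D)\,\mathrm{u}(D) f,
\]
valid on $\SW'(\R^n)$, and combined with the boundedness of $\rho(D),\tau(D)$ on $X^p$ they immediately yield
\[
    \|\mathrm{u}(D) f\|_{X^p} \lesssim \|\mathrm{w}^{-1}(D) f\|_{X^p} \lesssim \|\mathrm{u}(D) f\|_{X^p},
\]
which is exactly the claimed norm equivalence and, in passing, the equality of the underlying spaces.

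I do not foresee a genuine obstacle; the whole proof is essentially a symbolic calculus exercise. The one point that deserves a moment of care is the symbol estimate for $\tau$, but this falls into place because the bound \eqref{ecu2sym} applied to $u$ produces the factor $w(1/\langle\xi\rangle)$ that perfectly cancels the factor $1/w(1/\langle\xi\rangle)$ coming from \eqref{ecu2sym} applied to $w$; after this cancellation, the argument is completely routine.
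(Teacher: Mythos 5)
Your proposal is correct and follows essentially the same route as the paper: the paper's own (very terse) proof likewise reduces the claim to showing that $\mathrm{u}\,\mathrm{w}$ and $(\mathrm{u}\,\mathrm{w})^{-1}$ belong to $S^0(\R^n)$ via the Leibniz rule with \eqref{ecu1sym} and \eqref{ecu2sym}, and then invokes Lemma \ref{lem:KN1} together with the commutativity of Fourier multipliers and the mutual-inverse property from Proposition \ref{prop:221}. Your write-up simply supplies the details the paper leaves to the reader, including the key cancellation $u(1/\langle\xi\rangle)\,w(1/\langle\xi\rangle)=1$.
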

\begin{proof}
With an argument similar to the one on the previous proposition, one obtains that both $\mathrm{u}\Rw$ and $(\mathrm{u}\Rw)^{-1}$ belong to $S^0(\R^n)$.

The proof runs similarly, and it is a consequence of Lemma \ref{lem:KN1}, so the details are left to the reader.
\end{proof}

\begin{prop}
Let $1\leq p\leq \infty$, and let $w$ be an admissible weight.
\begin{enumerate}
    \item For all $1\leq p<\infty$, the space $\J_{\Rw}(X^p)$ endowed with the norm $\norm{.}_{\J_{\Rw}(X^p)}$ is a Banach space. In the case $p=\infty$ we have that $\norm{.}_{\J_{\Rw}(\BMO(\R^n))}$ defines a norm in $\J_{\Rw}(\BMO(\R^n))$ after identifying functions which differ almost everywhere by a constant, making it a Banach space;
    \item The dual space of $\J_{\Rw}(X^p)$ is $\J_{\Rw^{-1}}(X^{p'})$ when $1\leq p<\infty$, where $p'$ denotes the H\"older conjugate exponent of $p$;
    \item If $\inf_{t>0} w(t)>0$, then $X^p$ is continuously embedded in $\J_{\Rw}(X^p)$. 
\end{enumerate}
\end{prop}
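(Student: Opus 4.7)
The plan is to leverage the bijectivity of $\J_{\Rw}$ on $\SW'(\R^n)$ established in Proposition \ref{prop:221}. I will observe that, directly from the definitions, the map $g\mapsto \J_{\Rw} g$ is a surjective isometry from $X^p$ onto $\J_{\Rw}(X^p)$: surjectivity because any $f\in\J_{\Rw}(X^p)$ factors as $\J_{\Rw}(\J_{\Rw^{-1}}f)$, and the isometric property by the very definition of the norm on $\J_{\Rw}(X^p)$. This transports the Banach space structure of $X^p$ to $\J_{\Rw}(X^p)$. For the $p=\infty$ case I will separately verify that $\J_{\Rw^{-1}}$ sends constants to constants (since the Fourier transform of a constant is a multiple of $\delta_0$ and $\Rw^{-1}(0)=1/w(1)$), so the seminorm $\|\J_{\Rw^{-1}}(\cdot)\|_{\BMO(\R^n)}$ has precisely the constants as its kernel; quotienting then yields a genuine Banach norm on $\J_{\Rw}(\BMO(\R^n))$ as asserted.

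For part (2), I will dualise the isometric isomorphism obtained in part (1). Given $\Lambda\in(\J_{\Rw}(X^p))^*$, the composition $\Lambda\circ \J_{\Rw}$ is a bounded functional on $X^p$; by the classical duality $(X^p)^* = X^{p'}$ (namely $L^p$--$L^{p'}$ duality for $1<p<\infty$, and the Fefferman--Stein $H^1$--$\BMO$ duality when $p=1$), this functional is represented by some $G\in X^{p'}$. Setting $g:=\J_{\Rw^{-1}}G$ places $g$ in $\J_{\Rw^{-1}}(X^{p'})$ with $\|g\|_{\J_{\Rw^{-1}}(X^{p'})}=\|G\|_{X^{p'}}=\|\Lambda\|$. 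For $f\in\J_{\Rw}(X^p)$, writing $f=\J_{\Rw}(\J_{\Rw^{-1}}f)$ yields
\[
    \Lambda(f)=\langle \J_{\Rw^{-1}} f,\J_{\Rw} g\rangle_{X^p,X^{p'}}.
\]
A brief Parseval check on Schwartz functions confirms that this pairing extends the natural distributional one $\langle f,g\rangle$, so the identification is canonical.

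Part (3) will follow immediately from the second assertion in Proposition \ref{prop:221}: when $\inf_{t>0}w(t)>0$, the operator $\J_{\Rw^{-1}}$ is bounded on $X^p$, so any $f\in X^p$ satisfies $\J_{\Rw^{-1}}f\in X^p$, i.e., $f\in\J_{\Rw}(X^p)$, with $\|f\|_{\J_{\Rw}(X^p)}\lesssim\|f\|_{X^p}$. The main obstacles I anticipate are the bookkeeping around constants in part (1) for $p=\infty$, and, in part (2), pinning down that the abstract dualisation is really implemented by the concrete Fefferman--Stein pairing when $p=1$; beyond these points, everything reduces to abstract nonsense once Proposition \ref{prop:221} and the known dualities for $X^p$ are in hand.
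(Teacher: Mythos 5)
Your proposal is correct and follows essentially the same route as the paper: completeness is transported from $X^p$ through the (isometric) bijection $\J_{\Rw}$ (the paper spells this out with an explicit Cauchy-sequence argument), the duality statement is obtained by composing functionals with $\J_{\Rw}$ and invoking $(X^p)^*=X^{p'}$ exactly as you do, and part (3) is read off from Proposition \ref{prop:221}. Your explicit check that $\J_{\Rw^{-1}}$ maps constants to constants (via $\Rw(0)=w(1)$) is a slightly cleaner way to handle the $p=\infty$ identification than the paper's remark that $\norm{f}_{\J_{\Rw}(\BMO(\R^n))}=0$ forces $f=\Rw(0)C$, but the content is the same.
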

\begin{proof}
    The fact that $\norm{.}_{\J_{\Rw}(X^p)}$ defines a norm follows from $\left(X^p,\norm{.}_{X^p}\right)$ being a normed space and the linearity of $\J_{\Rw^{-1}}$, when $1\leq p<\infty$.
    
    In the endpoint case $p=\infty$,  the space of functions of bounded mean oscillation defines a normed space, provided that functions which differ almost everywhere by a constant are identified. As a consequence, if $\norm{f}_{\J_{\Rw}(\BMO(\R^n))}=0$ then $f=\Rw(0)C$ in $\SW'(\R^n)$ for some constant function $C$. Hence the same identification is needed on $\J_{\Rw}(\BMO(\R^n))$.
    
    Let $1\leq p\leq\infty$. To show the completeness of  $\J_{\Rw}(X^p)$, let $(f_n)_{n\in\N}$ be a Cauchy sequence in $\J_{\Rw}(X^p)$. Then $(J_{\Rw^{-1}}f_n)_{n\in\N}$ is a Cauchy sequence in $X^p$ and, since $X^p$ is complete, we can find $g\in X^p$ for which $\J_{\Rw^{-1}}f_n\rightarrow g$ in $X^p$ as $n\rightarrow\infty$. Then $J_\Rw g$ belongs to $\J_\Rw(X^p)$ and the sequence $(f_n)_{n\in\N}$ converges to $\J_\Rw g$ in $\J_\Rw(X^p)$. 
    
    To show the second statement, notice that $\Lambda\in (\J_{\Rw}(X^p))^*$ is equivalent to $\Lambda\circ \J_{\Rw}\in (X^p)^*$, which is identified with $X^{p'}$. This implies the existence of a unique $g_\Lambda\in X^{p'}$ such that for all $f\in X^p$
    \[
        \Lambda ( \J_{\Rw}f)=  \esc{g_\Lambda,f}=\esc{\J_{\Rw^{-1}}g_\Lambda,J_\Rw f},
    \]
    which yields the existence of a unique $h_\Lambda\in \J_{\Rw^{-1}}(X^{p'})$, representing $\Lambda$, from where we obtain $(\J_{\Rw}(X^p))^*\subset \J_{\Rw^{-1}}(X^{p'})$. The other inclusion is obtained analogously.
    
    Finally, the last statement follows from Proposition \ref{prop:221}.
\end{proof}

We notice that the potential-type spaces introduced above coincide with some Triebel-Lizorkin spaces of generalised smoothness for $1<p<\infty$, studied in \cite{Caetano-Moura}*{Section 2.2} and \cite{Moura}. Let us start by recalling their definition. Following the notation at the beginning of this section, let $\{\varphi_j\}_{j\geq 0}$ denote a resolution of the unity.
\begin{defi}%
Let $s\in\R$, $0<p<\infty$, $0<q\leq\infty$. Set $w$ for an admissible weight. We define the space $F_{p,q}^{s,w}(\R^n)$ to be the set of tempered distributions $f$ for which
\begin{equation*} %
    \norm{f}_{F_{p,q}^{s,w}(\R^n)}:=\norm{\left(\sum_{j=0}^\infty 2^{jsq}w(2^{-j})^q\abs{\varphi_j(D)f}^q\right)^{1/q}}_{L^p(\R^n)}<\infty.
\end{equation*}
\end{defi}

\begin{rem}%
    It was shown in \cite{Caetano-Moura} that the spaces $F_{p,q}^{s,w}(\R^n)$ are independent of the chosen resolution of the unity, in the sense that different resolutions, give rise to the same space with equivalent quasi-norms. 
\end{rem}

\begin{prop}\label{pot_as_triebel}
    Let $w$ be an admissible weight and $1<p<\infty$. Then the potential-type space $\J_{\Rw}(L^p(\R^n))$ coincides with the Triebel-Lizorkin space $F_{p,2}^{0,1/w}(\R^n)$ of generalised smoothness, with equivalent norms. Namely, it holds that
    \[
         \norm{f}_{\J_{\Rw}(L^p(\R^n))}\approx\norm{f}_{F_{p,2}^{0,1/w}(\R^n)},
    \]
    for all $f\in \J_{\Rw}(L^p(\R^n))={F_{p,2}^{0,1/w}(\R^n)}$.
\end{prop}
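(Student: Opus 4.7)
The plan is to reduce the desired equivalence to the classical Littlewood--Paley square function characterisation of $L^p$ for $1<p<\infty$, by showing that on each dyadic frequency annulus the multiplier $\Rw^{-1}(D)$ acts, up to a uniformly bounded Mikhlin perturbation, as the scalar multiplication by $w(2^{-j})^{-1}$.

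First I would apply the standard Littlewood--Paley theorem associated with the resolution $(\varphi_j)_{j\ge 0}$ to the function $g=\J_{\Rw^{-1}}f=\Rw^{-1}(D) f\in L^p(\R^n)$, obtaining
\[
\norm{f}_{\J_{\Rw}(L^p(\R^n))} = \norm{\Rw^{-1}(D) f}_{L^p(\R^n)}
\approx
\norm{\Bigl(\sum_{j\ge 0}\abs{\varphi_j(D)\Rw^{-1}(D) f}^{2}\Bigr)^{1/2}}_{L^p(\R^n)}.
\]
The task is then to compare this last square function with $\norm{f}_{F_{p,2}^{0,1/w}(\R^n)}=\norm{(\sum_j w(2^{-j})^{-2}\abs{\varphi_j(D) f}^2)^{1/2}}_{L^p(\R^n)}$.

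Next I would introduce the rescaled symbols $a_j(\xi):=w(2^{-j})\,\varphi_j(\xi)\Rw^{-1}(\xi)$ and $b_j(\xi):=w(2^{-j})^{-1}\varphi_j(\xi)\Rw(\xi)$. By the pointwise equivalence \eqref{sigma_equiv_weight} together with the derivative bounds \eqref{ecu1sym} and \eqref{ecu2sym}, admissibility of $w$ yields $w(1/\langle\xi\rangle)\approx w(2^{-j})$ for $\xi\in\mathrm{supp}\,\varphi_j$, and hence via the Leibniz rule $\{a_j\}_j$ and $\{b_j\}_j$ satisfy Mikhlin--H\"ormander estimates uniformly in $j$, with support in $\{\abs{\xi}\approx 2^j\}$ (understood trivially at $j=0$). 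The two algebraic identities
\[
\varphi_j(D)\Rw^{-1}(D) f = w(2^{-j})^{-1} a_j(D) f,
\qquad
w(2^{-j})^{-1}\varphi_j(D) f = b_j(D)\Rw^{-1}(D) f
\]
reduce both inclusions to vector-valued bounds of the shape
\[
\norm{\Bigl(\sum_j\abs{a_j(D)\tilde\varphi_j(D) h}^2\Bigr)^{1/2}}_{L^p(\R^n)}
\lesssim \norm{\Bigl(\sum_j\abs{\tilde\varphi_j(D) h}^2\Bigr)^{1/2}}_{L^p(\R^n)},
\]
where $\tilde\varphi_j:=\varphi_{j-1}+\varphi_j+\varphi_{j+1}$ is a fattened bump satisfying $a_j = a_j\tilde\varphi_j$ and $b_j = b_j\tilde\varphi_j$. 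Such estimates follow from the pointwise control $\abs{a_j(D) h(x)}\lesssim Mh(x)$ (valid for smooth, dyadically frequency-localised multipliers) combined with the Fefferman--Stein vector-valued maximal inequality for $1<p<\infty$. Finally, admissibility of $w$ gives $w(2^{-j})\approx w(2^{-j\pm 1})$, which permits swapping $\tilde\varphi_j$ for $\varphi_j$ inside the square function at the cost of a harmless constant.

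The main obstacle is establishing the uniform Mikhlin bounds for $a_j$ and $b_j$: differentiating $\Rw^{-1}$ or $\Rw$ produces a factor of $w(1/\langle\xi\rangle)^{\mp 1}$ through \eqref{ecu1sym}--\eqref{ecu2sym}, and one must verify that the prefactor $w(2^{-j})^{\pm 1}$ exactly cancels this weight uniformly on $\mathrm{supp}\,\varphi_j$. This is where \eqref{sigma_equiv_weight} combined with the admissibility hypothesis is essential. Once that uniformity is secured, both inequalities $\norm{f}_{F_{p,2}^{0,1/w}(\R^n)}\lesssim\norm{f}_{\J_{\Rw}(L^p(\R^n))}$ and its reverse drop out by symmetric applications of the scheme above, applied with $g=\Rw^{-1}(D) f$ in one direction and $h=\Rw^{-1}(D) f$ in the other.
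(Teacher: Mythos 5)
Your argument is correct, but it follows a genuinely different route from the paper. The paper disposes of the proposition in two lines: it invokes the classical identification $L^p(\R^n)=F^0_{p,2}(\R^n)$, uses Proposition \ref{prop:inverse_w} to replace $\J_{\Rw^{-1}}$ by the regularisation $\mathrm{u}$ of $w^{-1}$, and then cites the lifting property for Triebel--Lizorkin spaces of generalised smoothness from \cite{Caetano-Moura}*{Proposition 3.2}, which directly gives $\norm{\J_{\mathrm{u}}f}_{F^0_{p,2}(\R^n)}\approx\norm{f}_{F^{0,1/w}_{p,2}(\R^n)}$. What you have written is, in effect, a self-contained proof of that lifting property in the case at hand: the reduction to uniformly Mikhlin-bounded, dyadically supported symbols $a_j$, $b_j$ via \eqref{sigma_equiv_weight}, \eqref{ecu1sym}--\eqref{ecu2sym} and the admissibility estimate $w(1/\langle\xi\rangle)\approx w(2^{-j})$ on $\mathrm{supp}\,\varphi_j$, followed by the pointwise domination $\abs{a_j(D)h}\lesssim Mh$ and the Fefferman--Stein vector-valued maximal inequality, is exactly the standard mechanism underlying such lifting results. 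The details check out (including the fattening $\tilde\varphi_j$, the identity $a_j=a_j\tilde\varphi_j$, and the harmless shift $w(2^{-j})\approx w(2^{-j\pm1})$), and notably your argument, like the proposition itself, does not need the extra hypothesis $\inf_{t>0}w(t)>0$. The trade-off is the usual one: the paper's proof is shorter and leans on the cited literature (in particular it needs Proposition \ref{prop:inverse_w} to translate $\Rw^{-1}$ into the regularisation of $1/w$, a step your approach bypasses by working with $\Rw^{-1}$ directly), whereas yours is longer but elementary and makes the constants' dependence on the admissibility of $w$ completely explicit.
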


\begin{proof}
    The fact that the space $L^p(\R^n)$ and $F_ {p,2}^0(\R^n)$ coincide with equivalent norms (see e.g. \cite{Trie83}), and Proposition \ref{prop:inverse_w} yield
    \[
      \norm{f}_{\J_{\Rw}(L^p(\R^n))}\approx\norm{\J_{\mathrm{u}}f}_{F_ {p,2}^0(\R^n)},
    \]
    where $\mathrm{u}$ stands for the regularisation of $w^{-1}$. Finally, the lifting property  \cite{Caetano-Moura}*{Proposition 3.2} gives
    \[
      \norm{\J_{\mathrm{u}}f}_{F_ {p,2}^0(\R^n)}\approx\norm{f}_{F_{p,2}^{0,1/w}(\R^n)},
    \]
finishing the proof.
\end{proof}

To finish this section, we shall point out yet another connection of some of the potential-type spaces studied in this paper, with spaces existing in the literature. 

\begin{defi}\label{reg_logb} Let $b\in \R$. We shall denote by $w_b$ the admissible weight 
\[
    w_b(t)=\left(1+\log_+(1/t)\right)^b
\]
and by $\Rw_b$ its regularisation given in Definition \ref{Regularised_weight}, which following the notation there, can be explicitly written as
\[
    \Rw_b(\xi):=\sum_{j\geq 0} (1+j \log 2)^b\varphi_j(\xi).  
\]
\end{defi}

We would like to point out that the spaces $J_{\Rw_b}(L^2(\R^n))$ lay within the family of the so-called refined Sobolev scale $H^{\varphi}(\R^n)$, which also coincide with the H\"ormander spaces $B_{2,\mu}(\R^n)$ introduced by L. H\"ormander (see \cite{Horm_book}*{Definition 2.2.1} or \cite{Mikhailets}*{Definition 1.9}) for $\mu(\xi)=\varphi(\langle\xi\rangle)$, and appear in the study of elliptic operators. These spaces are defined as follows.

\begin{defi}\label{RefSob}
    \cite{Mikhailets}*{Definition 1.10} Let $\varphi:[1,\infty)\rightarrow (0,\infty)$ be a Borel measurable function for which both $\varphi$ and $1/\varphi$ are bounded in every compact interval of the form $[1,c]$, with $1<c<\infty$. In addition, let us assume that there exist $d\geq 1$ and a function  $\psi:[d,\infty)\rightarrow (0,\infty)$ satisfying the following two properties.
    \begin{enumerate}[label=(\roman*)]
        \item\label{eq:1} The function $\psi:[d,\infty)\rightarrow (0,\infty)$ is Borel measurable on $[d_0,\infty)$ for some $d_0\geq d$ and  for all $\lambda>0$
        \[
            \lim_{t\rightarrow\infty}\frac{\psi(\lambda t)}{\psi(t)}=1.
        \]
        \item It holds that $\varphi(t)\approx\psi(t)$ for all $t\geq d$.
    \end{enumerate}
    Then we define the space $H^{\varphi}(\R^n)$ as the set of tempered distributions $f$ for which their Fourier transform $\widehat{f}$ is locally integrable in $\R^n$ and satisfies
    \[
        \int_{\R^n}\abs{\varphi(\langle\xi\rangle)\widehat{f}(\xi)}^2\dd\xi<\infty,
    \]
    endowed with the norm
    \[
        \norm{f}_{H^{\varphi}(\R^n)}:=\left(\int_{\R^n}\abs{\varphi(\langle\xi\rangle)\widehat{f}(\xi)}^2\dd\xi\right)^{1/2}.
    \]
\end{defi}

More precisely, we have the following identification.
\begin{prop}\label{pot_as_hor}
    Let $b\in \R$. Then the potential-type space $\J_{\Rw_b}(L^2(\R^n))$ coincides with the refined Sobolev space $H^{\varphi}(\R^n)$, with $\varphi(t):=w_b(1/t)$, with equivalent norms. More precisely, it holds that
\[
    \norm{f}_{J_{\Rw_b}(L^2(\R^n))}\approx \norm{f}_{H^\varphi(\R^n)}
\]
for all $f\in\J_{\Rw_b}(L^2(\R^n))=H^\varphi(\R^n)$.
\end{prop}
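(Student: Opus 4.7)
The plan is to reduce the identification to a direct Fourier-side calculation, the key ingredients being Plancherel's theorem and the pointwise equivalence \eqref{sigma_equiv_weight} for the regularisation $\Rw_b$.

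I would proceed in two steps. First, I would verify that the weight $\varphi(t) = w_b(1/t)$, which equals $(1+\log t)^{b}$ for $t \geq 1$, satisfies the hypotheses of Definition \ref{RefSob}, so that $H^\varphi(\R^n)$ is well-defined. Since $\varphi$ is continuous and strictly positive on $[1,\infty)$, both $\varphi$ and $1/\varphi$ are bounded on every compact subinterval of $[1,\infty)$. Taking $\psi := \varphi$ with $d = 1$, the slowly-varying condition
\[
    \lim_{t\to\infty}\frac{\psi(\lambda t)}{\psi(t)} = \lim_{t\to\infty}\left(\frac{1+\log(\lambda t)}{1+\log t}\right)^{b} = 1
\]
holds for every $\lambda > 0$, so (i) is satisfied and (ii) is then trivial.

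Second, I would apply Plancherel's theorem directly to the defining norm. Since by \eqref{ecu1sym}--\eqref{ecu2sym} both $\Rw_b$ and $\Rw_b^{-1}$ are smooth tempered symbols, one has
\[
    \|f\|_{\J_{\Rw_b}(L^2(\R^n))}^2 = \|\Rw_b^{-1}(D)f\|_{L^2(\R^n)}^2 = \int_{\R^n} \frac{|\widehat{f}(\xi)|^2}{\Rw_b(\xi)^2}\,\mathrm{d}\xi.
\]
The pointwise equivalence \eqref{sigma_equiv_weight} yields $\Rw_b(\xi) \approx w_b(1/\langle\xi\rangle) = \varphi(\langle\xi\rangle)$ uniformly in $\xi \neq 0$; inserting this estimate into the integral above and comparing with the definition of $\|\cdot\|_{H^\varphi(\R^n)}$ would then give the claimed equivalence. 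If a convention adjustment is needed to align the two weight directions, I would first invoke Proposition \ref{prop:inverse_w} (applied with $u = w_b^{-1} = w_{-b}$) to exchange $\J_{\Rw_b}$ with $\J_{\Rw_b^{-1}}$ before running the same Plancherel computation.

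I do not anticipate any substantive technical obstacle: the non-trivial analytic content has already been packaged into the pointwise estimates \eqref{ecu1sym}--\eqref{sigma_equiv_weight} for $\Rw_b$ coming from \cite{Caetano-Moura}*{Lemma 3.1}, and the rest of the argument is routine bookkeeping of logarithmic weights combined with Plancherel's identity.
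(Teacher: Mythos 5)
Your overall strategy is the same as the paper's: first check that $\varphi(t)=w_b(1/t)$ fits Definition \ref{RefSob} (taking $d=1$, $\psi=\varphi$, boundedness of $\varphi$ and $1/\varphi$ on compacts, and the limit computation for the slowly varying condition), then identify the norms via Plancherel and \eqref{sigma_equiv_weight}. The first step is fine and matches the paper essentially verbatim.

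The issue is in the second step, and your own computation walks right up to it. The Plancherel identity
\[
    \norm{f}_{\J_{\Rw_b}(L^2(\R^n))}^2=\norm{\Rw_b^{-1}(D)f}_{L^2(\R^n)}^2\approx\int_{\R^n}\frac{\abs{\widehat f(\xi)}^2}{\Rw_b(\xi)^2}\,\ddd\xi
\]
is correct, and \eqref{sigma_equiv_weight} turns the right-hand side into $\int\abs{\widehat f(\xi)}^2\,w_b(1/\esc{\xi})^{-2}\,\ddd\xi$. But $\norm{f}_{H^\varphi(\R^n)}^2=\int\abs{\varphi(\esc{\xi})\widehat f(\xi)}^2\dd\xi=\int w_b(1/\esc{\xi})^{2}\abs{\widehat f(\xi)}^2\dd\xi$ carries the weight to the opposite power, so ``comparing with the definition'' does not close the argument: what the computation actually identifies is $\J_{\Rw_b}(L^2(\R^n))$ with $H^{1/\varphi}(\R^n)=H^{w_{-b}(1/\cdot)}(\R^n)$, not with $H^{\varphi}(\R^n)$. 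Your escape hatch via Proposition \ref{prop:inverse_w} is a no-op here: that proposition asserts $\J_{\Rw_b}(L^2)=\J_{\mathrm{u}^{-1}}(L^2)$ with \emph{equivalent norms}, so it re-expresses the left-hand side without flipping the exponent on the right. With the paper's convention $\norm{f}_{\J_{\Rw}(X^p)}=\norm{\J_{\Rw^{-1}}f}_{X^p}$ (consistent with the heuristic that $\J^s(L^2)$ is the Sobolev space of order $-s$), the correct identification is with $\varphi(t)=1/w_b(1/t)=w_{-b}(1/t)$. The paper's own proof elides exactly this point --- its final display writes $w_b(1/\esc{\xi})$ where the computation produces its reciprocal --- so your more honest version has exposed a sign discrepancy rather than introduced one; but as a proof of the statement as literally written, the last step fails, and you should either replace $\varphi$ by $1/\varphi$ (equivalently $b$ by $-b$) or state explicitly that this reciprocal version is what the Plancherel computation yields.
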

\begin{proof} We shall prove first that the function $\varphi(t)=w_b(1/t)$ satisfies the conditions required in Definition \ref{RefSob}, with $d=1$ and  $\psi=\varphi$, which would yield \ref{eq:1}.

The measurably condition is ensured by the monotonicity of the weights.
    
Furthermore, we have that $w_b(1/t)$ and $w_b^{-1}(1/t)$ are both bounded on any interval of the form $[1,c]$ for all $c>1$. Indeed, if $b\geq 0$ then $w_b$ satisfies
    \[
        w_b(1/t)\leq w_b(1/c)\quad \mathrm{and}\quad \frac{1}{w_b(1/t)}\leq\frac{1}{w_b(1)}
    \]
    for all $1\leq t\leq c$, with $1<c<\infty$. A similar argument shows the property for $b<0$.
    
    To show \ref{eq:1}, we notice that, for all $\lambda>0$
    \[
        \lim_{t\rightarrow\infty}\frac{w_b(1/\lambda t)}{w_b(1/t)}=\left(\lim_{t\rightarrow\infty}\frac{1+\log_+(\lambda t)}{1+\log_+ t}\right)^b=\left(\lim_{t\rightarrow\infty}\frac{1+\log(\lambda t)}{1+\log t}\right)^b=1.
    \]
    
    To show the equivalence of norms, note that the Pancherel Theorem and   \eqref{sigma_equiv_weight} yield
    \[
        \norm{f}_{J_{\Rw_b}(L^2(\R^n))}^2\approx\int_{\R^n}\abs{w_b(1/\langle\xi\rangle)\widehat{f}(\xi)}^2\dd\xi=\norm{f}_{H^\varphi(\R^n)},
    \]
    finishing the proof.
\end{proof}
\begin{rem}
    Let $w$ be one of the weights considered in Remark \ref{remloglog} and denote by $\Rw$ its regularisation. Define $\varphi(t)=w(1/t)$ for $t\geq 1$. One can show that this function satisfies the hypothesis in Definition \ref{RefSob} since it behaves asymptotically like  $\psi(t)=\log^{b_1}(t)\log(\log(t))^{b_2}$ as $t>>1$, which satisfies the condition \ref{eq:1} in Definition \ref{RefSob} as shown in \cite{Mikhailets}*{Example 1.1}. Hence, arguing as in the previous result, one can show that the associated spaces $J_{\Rw}(L^2(\R^n))$ and $H^\varphi(\R^n)$ coincide.
\end{rem}

\section{Main results}\label{main_results}

It was shown in \cite{Paper1}*{Theorem 7.1}  that bilinear Coifman-Meyer multipliers map $X_w(\R^n)\times X_w(\R^n)$ continuously into the potential-type space of generalised smoothness $\J_{\Rw}(\BMO(\R^n))$. More specifically, one obtains the following from that result.

\begin{teo}
Let $\Symb(\xi,\eta)$ be a smooth function on $\R^n\times\R^n\setminus\lbrace(0,0
)\rbrace$ satisfying \eqref{CM_mult_property} and let $T_\Symb$ be the corresponding bilinear Coifman-Meyer multiplier. Let $w$ be an admissible weight satisfying $\inf_{t>0} w(t)>0$ and let $\Rw$ be its regularisation given in Definition \ref{Regularised_weight}. There exists a constant $C$ such that 
\[
    \norm{T_\Symb(f,g)}_{\J_{\Rw}(\BMO(\R^n))}\leq C\norm{f}_{X_w(\R^n)}\norm{g}_{X_w(\R^n)},
\]
for all $f,g\in X_w(\R^n)$. 
\end{teo}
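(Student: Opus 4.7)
The plan is to reduce the estimate to one for bilinear paraproducts and then use the Carleson-measure characterisation of $\BMO(\R^n)$, exploiting the symbolic estimate \eqref{ecu2sym} to absorb the multiplier $\J_{\Rw^{-1}}$ into the paraproduct integrand.

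First I would perform the standard Coifman-Meyer reduction: a Littlewood-Paley partition of unity on $\R^n\times\R^n\setminus\set{(0,0)}$ decomposes $\Symb$ as a finite sum of three paraproduct-type symbols, so it suffices to show that, for each paraproduct of the form
\[
    \Pi(f,g)=\int_0^\infty (Q_tf)(P_tg)\,m(t)\,\frac{\dd t}{t},
\]
with $Q_t$ a smooth frequency projection onto $\set{\abs{\xi}\approx 1/t}$ having vanishing mean, $P_t$ a Schwartz projection onto $\set{\abs{\xi}\lesssim 1/t}$ and $m$ bounded, one has $\norm{\J_{\Rw^{-1}}\Pi(f,g)}_{\BMO(\R^n)}\lesssim \norm{f}_{X_w(\R^n)}\norm{g}_{X_w(\R^n)}$.

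Next, I would rewrite the multiplier action by commuting $\Rw^{-1}(D)$ with the integral. Since the integrand $(Q_tf)(P_tg)$ has frequency support essentially in $\set{\abs{\xi}\lesssim 1/t}$, the bound \eqref{ecu2sym} together with the equivalence \eqref{sigma_equiv_weight} shows that $\J_{\Rw^{-1}}$ acts on this piece essentially as multiplication by $1/w(t)$. A careful symbolic splitting gives a representation
\[
    \J_{\Rw^{-1}}\Pi(f,g)=\int_0^\infty \widetilde{Q}_t f\cdot \widetilde{P}_t g\,\widetilde{m}(t)\,\frac{\dd t}{t}+R(f,g),
\]
where $\widetilde{Q}_t$ is of the same Littlewood-Paley type as $Q_t$ (so it still enjoys the cancellation property), $\widetilde{P}_t:=\Rw^{-1}(D)P_t$ satisfies $\norm{\widetilde{P}_t g}_{L^\infty(\R^n)}\lesssim \norm{g}_{X_w(\R^n)}$ uniformly in $t$ (since $\sup_t\norm{P_tg}_\infty/w(t)<\infty$ and $\Rw^{-1}$ contributes a factor $\approx 1/w(t)$), and $R(f,g)$ is a harmless error coming from the high-frequency tails of the symbol of $\Rw^{-1}$.

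Finally, to conclude that the main term belongs to $\BMO(\R^n)$ with the correct norm, I would appeal to the BMO Carleson-measure criterion. Using $X_w(\R^n)\subset \bmo(\R^n)\subset \BMO(\R^n)$ and Theorem \ref{rem_BMO_CM} (or \ref{IO_BMO_CM}), the measure $\abs{\widetilde{Q}_t f}^2\,\dd x\,\dd t/t$ is Carleson with norm $\lesssim\norm{f}_{\BMO(\R^n)}^2\lesssim\norm{f}_{X_w(\R^n)}^2$; combined with the uniform pointwise bound on $\widetilde{P}_t g$, a standard paraproduct argument of the type used in Proposition \ref{hardy_CM} (testing against an $H^1$-atom or a Littlewood-Paley piece thereof) yields the $\BMO$ estimate. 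The main obstacle is Step~2: making the transfer of $\J_{\Rw^{-1}}$ past the paraproduct genuinely rigorous while preserving uniform-in-$t$ control of the resulting operators $\widetilde{Q}_t, \widetilde{P}_t$. This is where the quantitative symbol bounds \eqref{ecu1sym}-\eqref{ecu2sym} for $\Rw$ and $\Rw^{-1}$, rather than the mere boundedness of $\J_{\Rw^{-1}}$ as a Fourier multiplier on $X^p$, play the decisive role.
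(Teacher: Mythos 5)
First, note that the paper does not actually reprove this theorem: it is imported verbatim from \cite{Paper1}*{Theorem 7.1}, and the paraproduct estimates underlying it are recorded here as Proposition \ref{endpoint_results_BMO}. Your overall architecture (reduce to paraproducts, then combine a Carleson measure with a uniformly bounded factor and test against $H^1$) is the same as that scheme, but your Step~2 contains a genuine gap. The frequency support of $(Q_tf)(P_tg)$ is the sum set of an annulus $\set{\abs{\xi}\approx 1/t}$ and a ball $\set{\abs{\eta}\lesssim 1/t}$ of \emph{comparable} radius, hence a full ball $\set{\abs{\zeta}\lesssim 1/t}$ containing the origin. On that ball $\Rw^{-1}(\zeta)\approx 1/w(1/\esc{\zeta})$ ranges between $1/w(t)$ and $1/w(1)\approx 1$; only on the annulus $\set{\abs{\zeta}\approx 1/t}$ does \eqref{sigma_equiv_weight} give $\Rw^{-1}(\zeta)\approx 1/w(t)$. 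So $\J_{\Rw^{-1}}$ does \emph{not} act essentially as multiplication by $1/w(t)$ on the integrand, and the piece you set aside as a harmless remainder $R(f,g)$ coming from ``high-frequency tails'' is in reality the \emph{low}-output-frequency part of the paraproduct, on which $\J_{\Rw^{-1}}$ gives no gain at all. The same confusion undermines your claim $\norm{\Rw^{-1}(D)P_tg}_{L^\infty}\lesssim\norm{g}_{X_w(\R^n)}$: since $P_t$ is ball-supported, $\Rw^{-1}(D)$ does not contribute a factor $\approx 1/w(t)$ there, and a Littlewood--Paley decomposition of $P_tg$ shows the naive bound loses an unbounded factor as $t\to 0$.

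The correct repair is exactly the splitting $\phi=\psi^{(1)}+\phi^{(1)}$ of \eqref{eq:decomp}--\eqref{eq:Pi2}. In $\Pi_1$ the factor $P^{(1)}_tg$ has small ball support, so the output is annulus-supported and the outer projection $Q^{(2)}_t$ allows one to write $\J_{\Rw^{-1}}Q^{(2)}_t=w(t)^{-1}R_t$ with kernel bounds uniform in $t$ (this is where \eqref{ecu2sym} enters); the gain $w(t)^{-1}$ is then paired with $P^{(1)}_tg$ to give $\abs{P^{(1)}_tg}/w(t)\lesssim\norm{g}_{X_w(\R^n)}$, while $\abs{Q_tf}^2\dd x\,\dd t/t$ supplies the Carleson measure via Theorem \ref{rem_BMO_CM}. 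In $\Pi_2$ the second factor becomes the annulus projection $Q^{(1)}_tg$, so $\abs{Q^{(1)}_tg}^2\dd x\,\dd t/t$ is itself Carleson with norm $\lesssim\norm{g}_{\BMO(\R^n)}^2$, and one proves $\Pi_2:\BMO(\R^n)\times\BMO(\R^n)\to\BMO(\R^n)$ with no gain whatsoever; the conclusion for this piece then follows because $\inf_{t>0}w(t)>0$ forces $\Rw^{-1}\in S^0(\R^n)$ and hence $\BMO(\R^n)\hookrightarrow\J_{\Rw}(\BMO(\R^n))$. Your sketch never invokes the hypothesis $\inf_{t>0}w(t)>0$, which is precisely what rescues this second piece --- a reliable sign that the low-frequency part of the paraproduct has been lost in your decomposition.
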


The aim of the present article is to extend the boundedness range of these bilinear multipliers to the case when one of the two arguments of $T_\sigma$ belongs to the space $X_w(\R^n)$, while the other one is either in a Lebesgue space $L^p(\R^n)$, with $1<p<\infty$, or is an element in the Hardy space $H^1(\R^n)$. The obtained results involve, as in \cite{Paper 1}, potential-type spaces of generalised smoothness. On this occasion, Lebesgue and Hardy potential-type spaces arise. The following is the main result of this paper.

\begin{teo}\label{CM_main_thm}
Let $\Symb(\xi,\eta)$ be a smooth function on $\R^n\times\R^n\setminus\lbrace(0,0
)\rbrace$ satisfying \eqref{CM_mult_property} and let $T_\Symb$ be the corresponding bilinear Coifman-Meyer multiplier. Let $w$ be an admissible weight satisfying $\inf_{t>0} w(t)>0$ and let $\Rw$ be its regularisation given in Definition \ref{Regularised_weight}.
\begin{enumerate}[label=(\roman*)]
    \item\label{itm:ecu22} Given $1<p<\infty$, there exists a constant $C>0$ such that 
    \[
        \norm{T_\Symb(f,g)}_{\J_{\Rw}(L^p(\R^n))}\leq C\norm{f}_{L^p(\R^n)}\norm{g}_{X_w(\R^n)}
    \]
    holds for every $f\in L^p(\R^n)$ and $g\in X_w(\R^n)$.
    \item\label{itm:ecu23} The symbol $\Symb$ can be decomposed as the sum of two symbols $\Symb=\Symb_g+\Symb_b$, such that we can find constants $C',C''>0$ for which
    \[
        \norm{T_{\Symb_g}(f,g)}_{L^1(\R^n)}\leq C'\norm{f}_{H^1(\R^n)}\norm{g}_{X_w(\R^n)}
    \]
    and
    \[
        \norm{T_{\Symb_b}(f,g)}_{\J_{\Rw}(H^1(\R^n))}\leq C''\norm{f}_{H^1(\R^n)}\norm{g}_{X_w(\R^n)}
    \]
    hold for every $f\in H^1(\R^n)$ and $g\in X_w(\R^n)$.
\end{enumerate}
\end{teo}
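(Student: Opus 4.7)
My plan is to reduce Theorem \ref{CM_main_thm} to the paraproduct estimates of Theorem \ref{bmo_Lp} via the standard Coifman--Meyer decomposition. Using a smooth homogeneous partition of the frequency plane, I would write $\Symb = \Symb^{(1)} + \Symb^{(2)} + \Symb^{(3)}$, where $\Symb^{(1)}$ is supported in $\{|\eta| \ll |\xi|\}$, $\Symb^{(2)}$ in $\{|\xi| \ll |\eta|\}$, and $\Symb^{(3)}$ in the diagonal region $\{|\xi|\approx |\eta|\}$. Each piece inherits the estimates \eqref{CM_mult_property}, and a standard expansion writes the corresponding operators as continuous paraproducts of the form
\[
\int_0^\infty (Q_t f)(P_t g)\, m(t)\,\frac{\dd t}{t},
\]
with $Q_t$ a band-pass Littlewood--Paley operator concentrated at frequencies $|\xi|\sim 1/t$ and $P_t$ a low-pass operator (the roles of $f$ and $g$ being swapped for $\Symb^{(2)}$, and both factors being of $Q_t$-type for $\Symb^{(3)}$, which additionally requires a cancellation assumption on one factor).

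For part \ref{itm:ecu22}, the key observation is that $(Q_t f)(P_t g)$ has Fourier support essentially at scale $|\xi|\sim 1/t$. Hence composing with the Fourier multiplier $\J_{\Rw^{-1}}$ is, modulo an operator with symbol in $S^0(\R^n)$ (which by Lemma \ref{lem:KN1} is harmless on $L^p$), equivalent to dividing by $\Rw(1/t)\approx w(t)$, thanks to \eqref{sigma_equiv_weight}. Since $g\in X_w$ satisfies $\norm{P_t g}_{L^\infty}\leq w(t)\norm{g}_{X_w}$, the renormalised low-pass $P_t g/w(t)$ is uniformly $L^\infty$-bounded. The $\J_{\Rw}(L^p)$ estimate for $T_{\Symb^{(1)}}$ then collapses to a paraproduct $\int(Q_t f)\,\tilde g_t\, m(t)\,\dd t/t$ with $\norm{\tilde g_t}_{L^\infty}\lesssim\norm{g}_{X_w}$, namely a special case of the classical $L^p\times L^\infty\to L^p$ Coifman--Meyer bound \ref{Obj:2} furnished by Theorem \ref{bmo_Lp}. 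The pieces $T_{\Symb^{(2)}}$ and $T_{\Symb^{(3)}}$, which place $Q_t$ on $g\in X_w\subset \bmo\subset\BMO$, are controlled through the Carleson measure property of $|Q_t g|^2\,\dd x\,\dd t/t$ given by Theorem \ref{rem_BMO_CM} together with standard square-function/maximal-function duality on $L^p$.

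For part \ref{itm:ecu23}, I would set $\Symb_g := \Symb^{(2)}+\Symb^{(3)}$ and $\Symb_b := \Symb^{(1)}$. The paraproduct for $\Symb_g$ places $Q_t$ on $g$, so Theorem \ref{rem_BMO_CM} combined with Proposition \ref{hardy_CM} applied to $f\in H^1$ delivers $\norm{T_{\Symb_g}(f,g)}_{L^1}\lesssim \norm{f}_{H^1}\norm{g}_{X_w}$ directly, without any weight loss. The paraproduct for $\Symb_b$ instead places $Q_t$ on $f$ and $P_t$ on $g$, so the factor $w(t)$ coming from $\norm{P_t g}_{L^\infty}$ is unavoidable; the renormalisation trick from part \ref{itm:ecu22}, now combined with the classical $H^1\times L^\infty\to L^1$ Coifman--Meyer bound \ref{Obj:1}, yields $\J_{\Rw^{-1}}T_{\Symb_b}(f,g)\in L^1$, equivalently $T_{\Symb_b}(f,g)\in \J_{\Rw}(H^1)$.

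The principal obstacle, in my view, is justifying rigorously that $\J_{\Rw^{-1}}$ commutes with the scale-$t$ paraproduct structure modulo operators that are uniformly bounded on $L^p$ and $H^1$. This amounts to showing that if $\psi$ is a smooth bump localising at frequency $|\xi|\sim 1/t$, then the symbol $\xi\mapsto \Rw(1/t)\psi(\xi)/\Rw(\xi)$ lies in $S^0(\R^n)$ uniformly in $t>0$; this in turn follows from the derivative estimates \eqref{ecu1sym}--\eqref{ecu2sym} together with the doubling-type condition \eqref{log:doubling} characterising admissible weights. A secondary technical point is the handling of the diagonal piece $\Symb^{(3)}$: one needs to rearrange the cancellation so that the associated paraproduct actually fits the templates used in Theorem \ref{bmo_Lp} and Proposition \ref{hardy_CM}. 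Once these two reductions are in place, the conclusion follows by invoking Theorem \ref{bmo_Lp} as a black box.
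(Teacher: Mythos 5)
Your overall strategy --- decompose $\Symb$ by frequency region, expand each piece as a paraproduct, and invoke Theorem \ref{bmo_Lp} together with the Carleson-measure machinery --- is the same as the paper's, and your assignment of the high-low-on-$f$ piece to $\Symb_b$ and the rest to $\Symb_g$ matches the paper's choice $\Symb_g=\tau_1^{(2)}+\tau_2$, $\Symb_b=\tau_1^{(1)}$. However, two steps need repair. First, the claim that ``a standard expansion'' writes each localised piece as a single paraproduct $\int_0^\infty(Q_tf)(P_tg)m(t)\frac{\dd t}{t}$ with a scalar $m(t)$ is not literally true for a general Coifman--Meyer symbol: after localising $\tau_1(\xi,\eta)\widehat{\psi}(t\xi)\widehat{\phi}(t\eta)$ one must Fourier-invert in $(\xi,\eta)$, which produces a superposition $\int_{\R^{2n}}\Pi^{u,v}(f,g)\,(1+\abs{u}^2+\abs{v}^2)^{-N}\dd(u,v)$ of paraproducts built from the \emph{translated} bumps $\psi^u,\phi^v$. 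To sum these one must verify that the constants in Theorem \ref{bmo_Lp} and in the Carleson-measure estimates grow at most polynomially in $\abs{u},\abs{v}$, and then choose $N$ large; this uniformity check is the actual content of the paper's Section \ref{CFM_section} and is the source of the derivative count $4n+1$ in Remark \ref{min_der}. Your sketch omits it. Relatedly, the ``principal obstacle'' you single out (uniform $S^0$ bounds for $\Rw(1/t)\widehat{\psi}(t\xi)/\Rw(\xi)$) lives \emph{inside} the proof of Theorem \ref{bmo_Lp}; if you use that theorem as a black box, as you say, no further commutation of $\J_{\Rw^{-1}}$ with the paraproduct is required, so this is not where the remaining work lies.

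Second, your treatment of the piece $\Symb^{(2)}$ (supported in $\set{\abs{\xi}\ll\abs{\eta}}$) inside $\Symb_g$ does not work as written. For that piece the paraproduct takes the form $\int_0^\infty(P_tf)(Q_tg)m(t)\frac{\dd t}{t}$, i.e.\ the $H^1$ function $f$ receives the \emph{low-pass} operator. Proposition \ref{hardy_CM} requires the band-pass family $Q_t$ to act on the $H^1$ function; with $P_t$ in its place the relevant square function $\brkt{\int_0^\infty\abs{P_tf}^2\frac{\dd t}{t}}^{1/2}$ is not even finite in general, since $P_tf\to f$ as $t\to 0$. So the combination of Theorem \ref{rem_BMO_CM} with Proposition \ref{hardy_CM} handles the diagonal piece $\Symb^{(3)}$ but not $\Symb^{(2)}$. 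The correct (and simpler) tool is Lemma \ref{lem:faciles}\ref{Para_H1_Lp}: with $g\in X_w(\R^n)\subset\BMO(\R^n)$ fixed, $f\mapsto T_{\Symb^{(2)}}(f,g)$ is a Calder\'on--Zygmund operator, hence bounded from $H^1(\R^n)$ to $L^1(\R^n)$ with norm $\lesssim\norm{g}_{X_w(\R^n)}$; this is exactly how the paper disposes of $\tau_2$. With these two repairs your argument coincides with the paper's proof.
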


\begin{rem}
    Not that if $w\equiv 1$, Proposition \ref{prop:gather} implies that $X_w(\R^n)=L^\infty(\R^n)$, and it is easily shown that $\J_{\Rw}(H^1(\R^n))=H^1(\R^n)\subset L^1(\R^n)$. Hence Theorem \ref{CM_main_thm}  
    recovers the known boundeness  results $L^p(\R^n)\times L^\infty(\R^n)\rightarrow L^p(\R^n)$, and $H^1(\R^n)\times L^\infty(\R^n)\rightarrow L^1(\R^n)$ (see e.g. \cites{CMbook,Grafakos-Torres}).
\end{rem}

\begin{rem}\label{min_der}
Analysing the proof of the result above, we note that the conclusions of the theorem, can be achieved by requiring $\sigma$ to satisfy \eqref{CM_mult_property} only for multi-indices $\alpha,\beta$ such that $|\alpha|+|\beta|\leq 4n+1$.
\end{rem}

If we consider the logarithmic admissible weight $w_1$ from Example \ref{exe-logb}, and taking into consideration that by Proposition \ref{prop:gather} we have that $\bmo(\R^n)=X_{w_1}(\R^n)$, then Theorem \ref{CM_main_thm} yields the following endpoint estimates.

\begin{cor}\label{Cor:main_bmo}
    Let $w_1(t)=1+\log_+1/t$ and let $\Rw_1$ be its regularisation from Definition \ref{reg_logb}. Let $\Symb$ and $T_\Symb$ be as in Theorem \ref{CM_main_thm}. 
    
    \begin{enumerate}
        \item Given $1<p<\infty$ we can find a constant $C>0$ such that the estimate
    \[
        \norm{T_\Symb(f,g)}_{\J_{\Rw_1}(L^p(\R^n))}\leq C\norm{f}_{L^p(\R^n)}\norm{g}_{\bmo(\R^n)}
    \]
    holds for every $f\in L^p(\R^n)$ and $g\in\bmo(\R^n)$. 
    
    \item The symbol $\Symb$ can be decomposed as the sum of two symbols $\Symb=\Symb_g+\Symb_b$ such that we can find constants $C',C''>0$ for which the estimates
    \[
        \norm{T_{\Symb_g}(f,g)}_{L^1(\R^n)}\leq C'\norm{f}_{H^1(\R^n)}\norm{g}_{\bmo(\R^n)}
    \]
    and
    \[
        \norm{T_{\Symb_b}(f,g)}_{\J_{\Rw_1}(H^1(\R^n))}\leq C''\norm{f}_{H^1(\R^n)}\norm{g}_{\bmo(\R^n)}
    \]
    hold for every $f\in H^1(\R^n)$ and $g\in \bmo(\R^n)$.
    \end{enumerate}
\end{cor}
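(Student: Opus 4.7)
The plan is to derive this corollary as an immediate specialization of Theorem \ref{CM_main_thm} to the concrete weight $w_1(t)=1+\log_+(1/t)$. So the main task is to verify that $w_1$ satisfies all the hypotheses of that theorem, and that the relevant target and source spaces match those in the statement.

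First, I would check admissibility of $w_1$. The weight is monotonically non-increasing on $(0,1]$ and constant on $[1,\infty)$, and by Example \ref{exe-logb} it is admissible, i.e.\ satisfies \eqref{log:doubling}. Moreover $w_1(t)\geq 1$ for all $t>0$, so in particular $\inf_{t>0} w_1(t)>0$, which is the additional condition required in Theorem \ref{CM_main_thm}. Therefore Theorem \ref{CM_main_thm} applies with $w=w_1$, giving us an estimate controlling $T_\sigma(f,g)$ in terms of $\|f\|_{L^p(\R^n)}\|g\|_{X_{w_1}(\R^n)}$ in part \ref{itm:ecu22}, and analogous estimates in part \ref{itm:ecu23}.

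The final step is to replace the space $X_{w_1}(\R^n)$ appearing in the conclusion of Theorem \ref{CM_main_thm} by $\bmo(\R^n)$. This is exactly Proposition \ref{prop:gather}(4), which asserts that for the logarithmic weight $w_1(t)=1+\log_+(1/t)$ we have $X_{w_1}(\R^n)=\bmo(\R^n)$ with equivalent norms. The equivalence of norms means that the estimates from Theorem \ref{CM_main_thm} translate, up to an absorbed multiplicative constant, into the corresponding estimates with $\|g\|_{\bmo(\R^n)}$ on the right-hand side. The regularised weight $\Rw_1$ in Definition \ref{reg_logb} is just the regularisation of this $w_1$ in the sense of Definition \ref{Regularised_weight}, so the potential-type target spaces $\J_{\Rw_1}(L^p(\R^n))$ and $\J_{\Rw_1}(H^1(\R^n))$ match verbatim.

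There is no genuine obstacle here: once one grants Theorem \ref{CM_main_thm} and the identification $X_{w_1}=\bmo$ from Proposition \ref{prop:gather}, both parts of the corollary follow by direct substitution, with the symbol decomposition $\sigma=\sigma_g+\sigma_b$ in part (2) being exactly the one furnished by part \ref{itm:ecu23} of the theorem. The only real work was already carried out in proving Theorem \ref{CM_main_thm} itself and in establishing the characterisation of $X_{w_1}$.
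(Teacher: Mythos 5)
Your proposal is correct and matches the paper's own derivation: the corollary is obtained precisely by applying Theorem \ref{CM_main_thm} to the admissible weight $w_1$ (which satisfies $\inf_{t>0}w_1(t)\geq 1$) and invoking Proposition \ref{prop:gather} to identify $X_{w_1}(\R^n)$ with $\bmo(\R^n)$ with equivalent norms. Nothing further is needed.
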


\section{Boundedness of paraproducts}\label{SecPara}

The strategy we follow to prove our main results, relies on obtaining estimates for paraproducts of the type
\[
    \Pi(f,g)(x):=\int_0^\infty (Q_tf)(x)(P_tg)(x)m(t)\frac{\dd t}{t},
\]
where $m(t)$ is a measurable bounded function on $(0,\infty)$ and $Q_t$, $P_t$ are frequency localisation operators, defined as follows. Given $\psi\in \SW(\R^n)$, whose Fourier transform is supported in a ring satisfying 
\begin{equation*}
    \int_0^\infty\abs{\widehat{\psi}(t\xi)}^2\frac{\dd t}{t}<\infty,\quad \mbox{for all $\xi\neq 0$},
\end{equation*}
and given $\phi\in \SW(\R^n)$, whose Fourier transform is supported in a ball centred at the origin, we define the frequency localisation operators
\begin{align*}
Q_tf:=\widehat{\psi}(tD)f,\qquad P_tf:=\widehat{\phi}(tD)f,
\end{align*}
where $t>0$ and $f\in\SW(\R^n)$. 

To study these paraproducts, it is convenient to decompose them one step further. To this end, let $r<R$ be two positive real numbers such that $\widehat{\psi}$ is supported in the ring $\lbrace r\leq\abs{\xi}\leq R\rbrace$ and $\widehat{\phi}$ is supported in the ball $\lbrace\abs{\xi}\leq R\rbrace$. We write $\phi$ as $\phi=\psi^{(1)}+\phi^{(1)}$, with $\widehat{\psi^{(1)}}$ being supported in the annulus $\lbrace 2r/3\leq\abs{\xi}\leq R\rbrace$ and $\widehat{\phi^{(1)}}$ supported in the ball $\lbrace\abs{\xi}\leq r/2\rbrace$. In addition, we pick a Schwartz function $\phi^{(2)}$ whose Fourier transform is supported in a ball and is identically one on $\lbrace\abs{\xi}\leq 2R\rbrace$, while $\psi^{(2)}$ will denote a radial Schwartz function whose Fourier transform is supported in an annulus and is identically one on $\lbrace r/2\leq\abs{\xi}\leq 3R/2\rbrace$.

We can then write $\Pi$ as the sum of two bilinear operators,
\begin{equation}\label{eq:decomp}
    \Pi(f,g)=\Pi_1(f,g)+\Pi_2(f,g)
\end{equation}
where
\begin{equation}\label{eq:Pi1}
    \Pi_1(f,g)(x)=\int_0^\infty Q_t^{(2)}[ (Q_tf)(P_t^{(1)}g)](x)m(t)\frac{\dd t}{t}
\end{equation}
and
\begin{equation}\label{eq:Pi2}
    \Pi_2(f,g)(x)=\int_0^\infty P_t^{(2)}[ (Q_tf)(Q_t^{(1)}g)]m(t)\frac{\dd t}{t}.
\end{equation}
Here $Q_t^{(i)}$ and $P_t^{(i)}$ denote the frequency localisation operators associated to $\psi^{(i)}$ and $\phi^{(i)}$ respectively, with $i=1,2$.

The boundedness properties of these operators on $X_w(\R^n)\times X_w(\R^n)$ was studied in \cite{Paper1}*{Theorem 5.2}. More specifically the following result is consequence of the proof of that theorem:
\begin{prop}\label{endpoint_results_BMO}
Let $w$ be an admissible  weight satisfying $\inf_{t>0} w(t)>0$ and let $\Rw$ be its regularisation given in Definition \ref{Regularised_weight}. We can find constants $C',C''>0$ such that 
    \begin{equation*}%
    \left\Vert\Pi_1(f,g)\right\Vert_{\J_{\Rw}(\BMO(\R^n))}\leq C'\norm{f}_{\BMO(\R^n)}\norm{g}_{X_w(\R^n)}
    \end{equation*}
    holds for all $f\in \BMO(\R^n)$ and $g\in X_w(\R^n)$, while
    \begin{equation*}%
    \left\Vert\Pi_2(f,g)\right\Vert_{\BMO(\R^n)}\leq C''\norm{f}_{\BMO(\R^n)}\norm{g}_{\BMO(\R^n)}
    \end{equation*}
    holds for all $f,g\in \BMO(\R^n)$.
    In consequence, as $X_w(\R^n)\subset \BMO(\R^n)$, 
    \[
       \left\Vert\Pi(f,g)\right\Vert_{J_{\Rw}(\BMO(\R^n))}\lesssim \norm{f}_{\BMO(\R^n)}\norm{g}_{X_w(\R^n)}, 
    \]
holds for all $f\in \BMO(\R^n)$ and $g\in X_w(\R^n)$.
\end{prop}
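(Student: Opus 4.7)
The plan is to retrieve both estimates from the analysis of the paraproducts $\Pi_1$ and $\Pi_2$ carried out in the proof of \cite{Paper1}*{Theorem 5.2}, and then to combine them by the triangle inequality together with the embedding $X_w(\R^n)\subset\BMO(\R^n)$ from Proposition \ref{prop:gather}, which allows the $\Pi_2$ estimate to be rewritten with $\norm{g}_{X_w(\R^n)}$ in place of $\norm{g}_{\BMO(\R^n)}$ and the combined bound for $\Pi$ to be read off.

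For the $\Pi_1$ bound, I would apply $\J_{\Rw^{-1}}$ and reduce to controlling the $\BMO$-norm of the integral
\[
\J_{\Rw^{-1}}\Pi_1(f,g)(x)=\int_0^\infty \widetilde{Q}_t\bigl[(Q_tf)(P_t^{(1)}g)\bigr](x)\,m(t)\frac{\dd t}{t},
\]
where $\widetilde{Q}_t$ denotes the Fourier multiplier with symbol $\Rw(\xi)^{-1}\widehat{\psi^{(2)}}(t\xi)$. Since $\widehat{\psi^{(2)}}$ is ring-supported at frequencies $\abs{\xi}\approx 1/t$, the bounds \eqref{ecu2sym} and \eqref{sigma_equiv_weight} show that this symbol factors as $w(t)^{-1}$ times a symbol satisfying the standard Littlewood-Paley estimates uniformly in $t$. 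Testing against $h\in H^1(\R^n)$ and applying Proposition \ref{hardy_CM}, with Carleson datum $\abs{Q_tf}^2\dd t/t\,\dd x$ whose norm is $\lesssim\norm{f}_{\BMO(\R^n)}^2$ by Theorem \ref{rem_BMO_CM}, and with the factor $w(t)^{-1}(P_t^{(1)}g)(x)m(t)$, which by the definition of $X_w(\R^n)$ is uniformly controlled by $\norm{m}_{L^\infty}\norm{g}_{X_w(\R^n)}$, then closes the estimate.

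For the $\Pi_2$ bound, both $f$ and $g$ enter symmetrically in $\BMO(\R^n)$ and the output carries no logarithmic-derivative gain. Pairing with $h\in H^1(\R^n)$ and passing the outer $P_t^{(2)}$ to $h$ leaves an integral involving the two cancellative localisations $Q_tf$ and $Q_t^{(1)}g$, each of which generates a Carleson measure of norm $\lesssim\norm{f}_{\BMO(\R^n)}^2$ and $\lesssim\norm{g}_{\BMO(\R^n)}^2$ respectively by Theorem \ref{rem_BMO_CM}. A Cauchy-Schwarz step followed by Theorem \ref{Feff-Ste} and the standard $H^1$-tent space identification then yields the desired $\BMO\times\BMO\to\BMO$ estimate.

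The main obstacle is the $\Pi_1$ step, where one has to see that the factor $w(t)^{-1}$ coming from $\J_{\Rw^{-1}}$ exactly cancels the growth $\norm{P_t^{(1)}g}_{L^\infty(\R^n)}\lesssim w(t)\norm{g}_{X_w(\R^n)}$ uniformly in $t>0$; this is precisely where the admissibility hypotheses on $w$ are used, through the derivative estimates \eqref{ecu1sym}-\eqref{ecu2sym} and the pointwise equivalence \eqref{sigma_equiv_weight}. By comparison, the $\Pi_2$ bound is the standard $\BMO\times\BMO\to\BMO$ paraproduct estimate, in which no admissible weight appears, and the final combined estimate follows from the triangle inequality together with Proposition \ref{prop:gather}.
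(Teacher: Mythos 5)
Your proposal is correct and takes essentially the same approach as the paper, which likewise derives both estimates from the proof of \cite{Paper1}*{Theorem 5.2} and combines them via the embedding $X_w(\R^n)\subset \BMO(\R^n)$ (together with $\BMO(\R^n)\hookrightarrow \J_{\Rw}(\BMO(\R^n))$). Your supplementary sketch of the duality and Carleson-measure arguments matches the techniques the paper uses for the analogous Theorem \ref{bmo_Lp}.
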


We are interested in finding estimates for the paraproduct $\Pi$ when one of the arguments belongs to the space $X_w(\R^n)$, while the other function belongs to either a Lebesgue space $L^p(\R^n)$, $1<p<\infty$, or the Hardy space $H^1(\R^n)$. 

When the term in $X_w(\R^n)$ lies in the first argument, these type of boundedness properties are a direct consequence of results already existing in the literature, and we summarise them in the following lemma. 

\begin{lem}\label{lem:faciles} Let $w$ be an admissible  weight satisfying $\inf_{t>0} w(t)>0$ and let $\Rw$ be its regularisation given in Definition \ref{Regularised_weight}.
\begin{enumerate}[label=(\roman*)]
    \item \label{Para_BMO_Lp}
    Let $1<p<\infty$. There is a constant $C>0$ such that
    \[
        \norm{\Pi(f,g)}_{L^p(\R^n)}\leq C\norm{f}_{X_w(\R^n)}\norm{g}_{L^p(\R^n)}
    \]
    for all $f\in X_w(\R^n)$ and $g\in L^p(\R^n)$.
    \item \label{Para_H1_Lp}There is a constant $C>0$ such that
    \[
        \norm{\Pi(f,g)}_{L^1(\R^n)}\leq C\norm{f}_{X_w(\R^n)}\norm{g}_{H^1(\R^n)}
    \]
    for all $f\in X_w(\R^n)$ and $g\in H^1(\R^n)$.
\end{enumerate}
\end{lem}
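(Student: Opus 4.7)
The plan is to fix $f$ and view $T_f(g) := \Pi(f,g)$ as a linear operator, bounding its norm by $\norm{f}_{\BMO(\R^n)}$. Since Proposition \ref{prop:gather} gives $X_w(\R^n) \hookrightarrow \bmo(\R^n) \hookrightarrow \BMO(\R^n)$, it is enough to prove both inequalities with $\norm{f}_{X_w(\R^n)}$ replaced by $\norm{f}_{\BMO(\R^n)}$. In this setting, the $T_f$ are exactly the classical Coifman--Meyer paraproducts against a $\BMO$-function, for which the boundedness properties \ref{Para_BMO_Lp} and \ref{Para_H1_Lp} are well-known.

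The key step is the $L^2$-boundedness $\norm{T_f g}_{L^2(\R^n)} \lesssim \norm{f}_{\BMO(\R^n)}\norm{g}_{L^2(\R^n)}$. By duality, this reduces to estimating
\[
    \abs{\int_{\R^n}\!\int_0^\infty (Q_tf)(x)(P_tg)(x)h(x)\,m(t)\,\frac{\dd t}{t}\,\dd x}
\]
for $h\in L^2(\R^n)$. Since $\widehat{\psi}(0)=0$, Theorem \ref{rem_BMO_CM} ensures that $\abs{Q_tf(x)}^2\,\dd x\,\frac{\dd t}{t}$ is a Carleson measure of norm $\lesssim \norm{f}_{\BMO(\R^n)}^2$; Cauchy--Schwarz combined with the Carleson embedding theorem (Theorem \ref{Feff-Ste}) applied to the non-tangential maximal function of the product $(P_tg)\cdot h$ then yields the desired $L^2$-estimate.

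Once $L^2$-boundedness is established, $T_f$ can be realised as a linear singular integral operator with kernel
\[
    K_f(x,y) := \int_0^\infty (Q_tf)(x)\,\phi_t(x-y)\,m(t)\,\frac{\dd t}{t},
\]
and standard arguments (splitting the $t$-integral at $t \sim \abs{x-y}$ and exploiting the rapid decay of $\phi$) show that $K_f$ satisfies Calder\'on--Zygmund size and H\"ormander regularity estimates with constants controlled by $\norm{f}_{\BMO(\R^n)}$. Classical Calder\'on--Zygmund theory then extends $L^2$-boundedness to $L^p$-boundedness for $1<p<\infty$, giving \ref{Para_BMO_Lp}, and to $H^1\to L^1$ boundedness, giving \ref{Para_H1_Lp}; see for instance \cite{Grafakos2}.

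The main technical subtlety is that $Q_tf$ is not uniformly bounded in $L^\infty$ when $f\in\BMO$, so the kernel regularity bounds must be phrased in an averaged, Carleson-measure sense rather than pointwise. This is standard and the whole package is essentially folklore, which is why the authors present the lemma as a direct consequence of existing literature.
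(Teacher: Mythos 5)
Your top-level strategy is exactly the paper's: fix $f\in X_w(\R^n)\subset\BMO(\R^n)$, observe that $g\mapsto\Pi(f,g)$ is a Calder\'on--Zygmund operator with norm controlled by $\norm{f}_{\BMO(\R^n)}$, and invoke classical theory for the $L^p$ and $H^1\to L^1$ bounds (the paper does this in one sentence, citing \cite{Grafakos2}). However, the one step you chose to detail --- the $L^2$ bound --- is sketched in a way that does not work. If you pair $\Pi(f,g)$ directly with $h\in L^2(\R^n)$ and try Cauchy--Schwarz together with Theorem \ref{Feff-Ste} on $F(x,t)=(P_tg)(x)h(x)$ against the Carleson measure $\abs{Q_tf(x)}^2\,\frac{\dd t}{t}\dd x$ (which is indeed Carleson by Theorem \ref{rem_BMO_CM}), you run into two obstructions. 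First, however you split the integrand $\abs{Q_tf}\abs{P_tg}\abs{h}$ by Cauchy--Schwarz with respect to $\frac{\dd t}{t}\dd x$, the factor that is \emph{not} weighted by $\abs{Q_tf}^2$ involves $\int_0^\infty(\cdots)\frac{\dd t}{t}$ of a quantity with no decay as $t\to 0$ (e.g.\ $\abs{P_tg(x)h(x)}\to\abs{g(x)h(x)}$), so it diverges. Second, the nontangential maximal function $F^*(x)=\sup_{t>0}\sup_{\abs{x-y}<t}\abs{P_tg(y)h(y)}$ is not controlled: for $t\approx\abs{x-y}$ the cone reaches values of $h$ arbitrarily far from $x$, and $F^*$ need not lie in any useful $L^p$ with norm $\lesssim\norm{g}_{L^2}\norm{h}_{L^2}$. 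The missing ingredient is precisely the decomposition $\Pi=\Pi_1+\Pi_2$ of \eqref{eq:decomp}: the outer localisations make $h$ enter either as $Q_t^{(2)}h$, which obeys the square-function bound $\int\!\!\int_0^\infty\abs{Q_t^{(2)}h}^2\frac{\dd t}{t}\dd x\lesssim\norm{h}_{L^2(\R^n)}^2$ by Plancherel, or as $P_t^{(2)}h$, whose nontangential maximal function is dominated pointwise by the Hardy--Littlewood maximal function of $h$. Then each term has one Carleson-measure factor, one square-function factor and one maximal-function factor, and Cauchy--Schwarz plus Theorem \ref{Feff-Ste} closes the estimate.

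A secondary inaccuracy: your closing remark that $Q_tf$ fails to be uniformly bounded for $f\in\BMO(\R^n)$ is not correct. Since $\widehat{\psi}$ is supported in an annulus, one has $\sup_{t>0}\norm{Q_tf}_{L^\infty(\R^n)}\lesssim\norm{f}_{\BMO(\R^n)}$ (subtract the average of $f$ over $B(x,t)$ and sum over dyadic annuli using the rapid decay of $\psi$), and it is exactly this bound that makes the pointwise size and regularity estimates for your kernel $K_f$ go through with constant $\lesssim\norm{f}_{\BMO(\R^n)}$; no averaged, Carleson-type reformulation of the kernel conditions is needed. With the $L^2$ step repaired as above and this observation in place, the rest of your Calder\'on--Zygmund argument is sound and yields both \ref{Para_BMO_Lp} and \ref{Para_H1_Lp}.
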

\begin{proof}
If a function $f\in X_w(\R^n)\subset \BMO(\R^n)$ then the linear operator $g\mapsto\Pi(f,g)$ is of Calder\'on-Zygmund-type (see e.g. \cite{Grafakos2}*{Section 4}), and hence it is bounded on any $L^p(\R^n)$ for $1<p<\infty$, and from $H^1(\R^n)$ to $L^1(\R^n)$ with operator norm at most a multiple of $\norm{f}_{\BMO(\R^n)}$, which is smaller or equal than $\norm{f}_{X_w(\R^n)}$.
\end{proof}

To complete the picture, we need to study the case where the second argument belongs to $X_w(\R^n)$, while the first one is considered to be in either a Lebesgue space $L^p(\R^n)$, with $1<p<\infty$, or the Hardy space $H^1(\R^n)$.

\begin{teo}\label{bmo_Lp}
   Let $w$ be an admissible  weight satisfying $\inf_{t>0} w(t)>0$ and let $\Rw$ be its regularisation given in Definition \ref{Regularised_weight}.
    \begin{enumerate}[label=(\roman*)]
    
    \item\label{itm:ecu16} Let $1<p<\infty$. There is a constant $C>0$ such that 
    \[
        \norm{\Pi(f,g)}_{\J_{\Rw}(L^p(\R^n))}\leq C\norm{f}_{L^p(\R^n)}\norm{g}_{X_w(\R^n)}
    \]
    holds for every $f\in L^p(\R^n)$ and $g\in X_w(\R^n)$.
    
    \item\label{itm:ecu_H1_bmo} 
    We can find constants $C',C''>0$ such that
     \begin{equation*}%
    \left\Vert\Pi_1(f,g)\right\Vert_{\J_{\Rw}(H^1(\R^n))}\leq C''\norm{f}_{H^1(\R^n)}\norm{g}_{X_w(\R^n)}
    \end{equation*}
    and
    \begin{equation*}%
    \left\Vert\Pi_2(f,g)\right\Vert_{L^1(\R^n)}\leq C'\norm{f}_{H^1(\R^n)}\norm{g}_{\BMO(\R^n)}
    \end{equation*}
    hold for every $f\in H^1(\R^n)$ and $g\in X_w(\R^n)$.
    
    \end{enumerate}
\end{teo}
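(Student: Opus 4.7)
I will base the argument on the decomposition $\Pi=\Pi_1+\Pi_2$ from \eqref{eq:decomp}: part (ii) will be proved directly via the $H^1$/Carleson pairing in Proposition \ref{hardy_CM}, and part (i) will then follow from (ii) together with Proposition \ref{endpoint_results_BMO} by complex interpolation, transferred to the potential-type scale through the isomorphism $\J_{\Rw^{-1}}:\J_{\Rw}(X^p)\to X^p$ of Proposition \ref{prop:221}.

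For $\Pi_2$ in (ii), I dualise against $L^\infty(\R^n)$. For $h\in L^\infty(\R^n)$, unfolding \eqref{eq:Pi2} yields
\[
\langle \Pi_2(f,g),h\rangle=\int\!\!\int_0^\infty (Q_tf)(x)\,G(t,x)\,v(t,x)\,\frac{\dd t}{t}\,\dd x,
\]
with $G(t,x):=(Q_t^{(1)}g)(x)$ and $v(t,x):=(P_t^{(2)*}h)(x)\,m(t)$. Since $\psi^{(1)}$ has annular Fourier support, Theorem \ref{rem_BMO_CM} gives that $|G|^2\dd x\,\dd t/t$ is Carleson of norm $\lesssim\|g\|_{\BMO}^2$, while convolution with the integrable kernel of $P^{(2)}$ yields $\|v\|_{L^\infty}\lesssim\|m\|_\infty\|h\|_\infty$; Proposition \ref{hardy_CM} then closes this case. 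For $\Pi_1$ in (ii), I dualise against $\J_{\Rw^{-1}}(\BMO(\R^n))$ and represent a test element as $h=\J_{\Rw^{-1}}H$ with $H\in\BMO(\R^n)$. The central algebraic move is to absorb $\J_{\Rw^{-1}}$ into $Q_t^{(2)*}$: define $\widetilde{Q}_t^{(2)*}$ to be the Fourier multiplier with symbol $w(t)\,\Rw^{-1}(\xi)\,\overline{\widehat{\psi^{(2)}}(t\xi)}$, so that $Q_t^{(2)*}h=w(t)^{-1}\widetilde{Q}_t^{(2)*}H$. Combining \eqref{ecu2sym}, \eqref{sigma_equiv_weight} and \eqref{equivalence}, one verifies that, uniformly in $t>0$, these symbols are smooth bumps supported in the annulus $\{|\xi|\asymp 1/t\}$ with the same Schwartz seminorms as $\widehat{\psi^{(2)}}(t\cdot)$; consequently $\{\widetilde{Q}_t^{(2)*}\}_{t>0}$ falls under the scope of Theorem \ref{IO_BMO_CM}, so $|\widetilde{Q}_t^{(2)*}H|^2\dd x\,\dd t/t$ is Carleson of norm $\lesssim\|H\|_{\BMO}^2$. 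Rewriting
\[
\langle \Pi_1(f,g),h\rangle=\int\!\!\int_0^\infty (Q_tf)(x)\,\widetilde{Q}_t^{(2)*}H(x)\,\frac{(P_t^{(1)}g)(x)\,m(t)}{w(t)}\,\frac{\dd t}{t}\,\dd x
\]
and using $\|P_t^{(1)}g\|_{L^\infty}\le w(t)\|g\|_{X_w}$ to bound the $v$-factor in $L^\infty((0,\infty)\times\R^n)$, one more application of Proposition \ref{hardy_CM} concludes the estimate.

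For part (i), I freeze $g\in X_w(\R^n)\subset\BMO(\R^n)$ and consider the linear operators $T_g^j(f):=\Pi_j(f,g)$. Part (ii) together with Proposition \ref{endpoint_results_BMO} supplies the endpoint boundedness $T_g^1:H^1\to\J_{\Rw}(H^1)$, $T_g^1:\BMO\to\J_{\Rw}(\BMO)$, $T_g^2:H^1\to L^1$ and $T_g^2:\BMO\to\BMO$, all with operator norms $\lesssim\|g\|_{X_w}$. Classical complex interpolation between $H^1$ and $\BMO$, pulled back through the isomorphism $\J_{\Rw^{-1}}$, then yields $T_g^1:L^p\to\J_{\Rw}(L^p)$ and $T_g^2:L^p\to L^p\hookrightarrow\J_{\Rw}(L^p)$ for every $1<p<\infty$, where the final embedding uses that $\inf_{t>0}w(t)>0$ forces $\Rw^{-1}\in S^0(\R^n)$. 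Summing the two yields (i).

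The main technical obstacle is the symbol-calculus step behind the $\Pi_1$ argument in (ii): one must verify that the renormalised multipliers $w(t)\Rw^{-1}(\xi)\overline{\widehat{\psi^{(2)}}(t\xi)}$ do produce a Carleson-compatible frequency-localisation family uniformly in $t>0$. This reduces to combining the pointwise bounds \eqref{ecu1sym}--\eqref{ecu2sym} on the derivatives of $\Rw^{\pm 1}$ with the slow variation of $w$ encoded in \eqref{equivalence}, and essentially captures the fact that $\Rw(\xi)$ is constant, modulo a bounded multiplicative error, on each dyadic annulus.
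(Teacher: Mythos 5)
Your proposal is correct and follows essentially the same route as the paper: the $\Pi_2$ bound via Theorem \ref{rem_BMO_CM} and Proposition \ref{hardy_CM}, the $\Pi_1$ bound by absorbing $\J_{\Rw^{-1}}$ into the outer frequency localisation and renormalising by $w(t)$ (your $\widetilde{Q}_t^{(2)*}$ is exactly the paper's operator $R_t$, whose Carleson property is likewise checked through Theorem \ref{IO_BMO_CM} using \eqref{ecu2sym} and \eqref{sigma_equiv_weight}), and part (i) by Janson--Jones complex interpolation against Proposition \ref{endpoint_results_BMO} followed by the embedding $L^p(\R^n)\subset\J_{\Rw}(L^p(\R^n))$. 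The only differences are notational (adjoints, the roles of $h$ and $H$) and the level of detail in the kernel estimates, which you correctly identify as the main point to verify.
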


\begin{proof}[{Proof of Theorem \ref{bmo_Lp}}]

Let us begin by proving the last part of the second statement. To this end lets fix first $f\in H^1(\R^n)$ and $g\in  \BMO(\R^n)$. 

We shall prove that $\Pi_2(f,g)\in L^1(\R^n)$. By duality, it is enough to show that, for $H\in L^\infty(\R^n)$, we can estimate the expression
\[
    \langle\Pi_2(f,g),H\rangle=\int\!\!\!\!\int_0^\infty (Q_tf)(x)(Q_t^{(1)}g)(x)(P^{(2)}_tH)(x)m(t)\frac{\dd t}{t}\dd x.
\]
Note that Theorem \ref{rem_BMO_CM} yields that $|{(Q_t^{(1)}g)(x)}|^2 t^{-1}\dd t\dd x$ is a Carleson measure, whose norm is bounded by a constant times $\norm{g}_{\BMO}^2$. Moreover, we have that
\[
    \sup_{t>0,x\in\R^n}\abs{(P^{(2)}_tH)(x)m(t)}\leq \norm{m}_\infty \norm{H}_{L^\infty(\R^n)}.
\]
Therefore, Proposition \ref{hardy_CM} yields that
\[
    \abs{\langle\Pi_2(f,g),H\rangle}\lesssim\norm{m}_{L^\infty(\R^n)}\norm{f}_{H^1(\R^n)}\norm{g}_{\BMO(\R^n)}\norm{H}_{L^\infty(\R^n)}.
\]
It follows that $\Pi_2(f,g)$ belongs to $L^1(\R^n)$ and
\begin{equation*}
    \norm{\Pi_2(f,g)}_{L^1(\R^n)}\lesssim\norm{m}_{L^\infty(\R^n)}\norm{f}_{H^1(\R^n)}\norm{g}_{\BMO(\R^n)}.
\end{equation*}
To study the stated boundedness for $\Pi_1$, let us fix $f\in H^1(\R^n)$ and $g\in  X_w(\R^n)$. Consider now a function $H$ of the form $H=J_{\Rw^{-1}}h$ with $h\in\BMO(\R^n)$. By duality it is enough to estimate the expression
\[
    \langle\Pi_1(f,g),H\rangle=\int\!\!\!\!\int_0^\infty (Q_tf)(x)w(t)(Q^{(2)}_tH)(x)v(t,x)\frac{\dd t}{t}\dd x,
\]
where $v(t,x)=m(t)(P_t^{(1)}g)(x)/w(t)$. By the definition of the norm on $X_w(\R^n)$, we have that 
\begin{equation}\label{eq:pepe}
        \abs{v(t,x)}\lesssim\norm{m}_{L^\infty(\R^n)}\norm{g}_{X_w(\R^n)}, \quad \mbox{for all $t>0$ and  $x\in\R^n$}.
\end{equation}

 Next we can write $w(t)(Q^{(2)}_tH)(x)=(R_th)(x)$, where $R_t$ is the integral operator defined by
\[
(R_tF)(x):=\int_{\R^n} K_t(x,y)F(y)\dd y
\]
with kernel $K_t(x,y):=J_t(x-y)$ and
\[
    J_t(z)=w(t)\int_{\R^n}\frac{\widehat{\psi^{(2)}}(t\xi)}{\Rw(\xi)}e^{iz\xi}\ddd\xi.
\]
Let us now show that the linear operators $R_t$ and their kernels $K_t$ satisfy the hypotheses of Theorem \ref{IO_BMO_CM}. This would imply that the measure defined by 
\begin{equation}\label{eq:gotera}
        \abs{w(t)(Q^{(2)}_tH)(x)}^2\dd x\frac{\dd t}{t},
\end{equation}
is a Carleson measure with norm bounded by a constant times $\norm{h}_{\BMO(\R^n)}^2$.

To this end, we begin by observing that $R_t1\equiv 0$ for all $t>0$. To find the kernel estimates, a change of variables and integration by parts yield
\begin{align}\label{ecu27}
    J_t(z)&=\frac{w(t)}{t^n}\int_{\R^n}\frac{\widehat{\psi^{(2)}}(\xi)}{\Rw(\xi/t)}e^{iz\xi/t}\ddd\xi \nonumber\\
    &=\frac{w(t)}{t^n}\left(\frac{\abs{z}}{t}\right)^N\int_{\R^n}(-\Delta)^N\left[\frac{\widehat{\psi^{(2)}}(\xi)}{\Rw(\xi/t)}\right]e^{iz\xi/t}\ddd\xi
\end{align}
for any $N\geq 1$. Note that the Leibniz rule, the fact that $\abs{\xi}\approx 1$ and \eqref{ecu2sym} give
\begin{equation}\label{ecu26}
    \abs{(-\Delta)^N\left[\frac{\widehat{\psi^{(2)}}(\xi)}{\Rw(\xi/t)}\right]}\lesssim\frac{1}{w(t)}.
\end{equation}
Finally, using \eqref{ecu27}, \eqref{ecu26} and the fact that $\widehat{\psi^{(2)}}$ is compactly supported we obtain for any integer $N>n/2$ that
\begin{equation*}%
    \abs{J_t(z)}\lesssim t^{-n}\frac{t^{2N}}{(t+\abs{z})^{2N}},
\end{equation*}
from where the estimates for $K_t$ follows.

Finally, the Plancherel theorem  and \eqref{sigma_equiv_weight} yield the quadratic estimate
\begin{align*}
    \int_0^\infty\!\!\!\!\int\abs{(R_tf)(x)}^2\dd x\frac{\dd t}{t}&\approx\int_0^\infty\!\!\!\!\int\abs{Q_t^{(2)}f(x)}^2\dd x\frac{\dd t}{t}\lesssim\norm{f}_{L^2(\R^n)}^2.
\end{align*}
Then \eqref{eq:pepe}, \eqref{eq:gotera} and Proposition \ref{hardy_CM} imply
\begin{align*}
    \abs{\langle\Pi_1(f,g),J_{\Rw^{-1}}h\rangle}&=\abs{\int\!\!\!\!\int_0^\infty (Q_tf)(x)w(t)(Q^{(2)}_tH)(x)v(t,x)\frac{\dd t}{t}\dd x}\nonumber\\
    &\lesssim\norm{m}_{L^\infty(\R^n)}\norm{f}_{H^1(\R^n)}\norm{g}_{X_w(\R^n)}\norm{h}_{\BMO(\R^n)}.
\end{align*}
It follows by duality that $\J_{\Rw^{-1}} \Pi_1(f,g)$ belongs to the Hardy space $H^1(\R^n)$ and
\[
    \norm{\Pi_1(f,g)}_{\J_{\Rw}H^1(\R^n)}=\norm{\J_{\Rw^{-1}} \Pi_1(f,g)}_{H^1(\R^n)}\lesssim\norm{m}_{L^\infty(\R^n)}\norm{f}_{H^1(\R^n)}\norm{g}_{X_w(\R^n)},
\]
which shows the claimed estimate for $\Pi_1$.

Finally, let us show the first part of the statement. To this aim, notice that for a fixed $g\in X_w(\R^n)$, if we consider the linear operator $f\mapsto \J_{\Rw^{-1}}\Pi_1(f,g)$, by complex interpolation  between the results above (see. e.g. \cite{Janson_Jones}), and those in Proposition \ref{endpoint_results_BMO}, it follows that for all $1<p<\infty$ and for all $f\in L^p(\R^n)$ holds that
\[
    \left\Vert\Pi_1(f,g)\right\Vert_{\J_{\Rw}(L^p(\R^n))}\leq C''\norm{f}_{L^p(\R^n)}\norm{g}_{X_w(\R^n)},
\]
with constant independent on $f$ or $g$. 
Similarly, one obtains that for all $1<p<\infty$
\[
    \left\Vert\Pi_2(f,g)\right\Vert_{L^p(\R^n)}\leq C''\norm{f}_{L^p(\R^n)}\norm{g}_{\BMO(\R^n)}
\]
holds for all $f\in L^p(\R^n)$ and $g\in \BMO$.

We know from Proposition \ref{prop:221} that $L^p(\R^n)\subset \J_{\Rw}(L^p(\R^n))$, from where it follows that $L^p(\R^n)+\J_{\Rw}(L^p(\R^n))=\J_{\Rw}(L^p(\R^n))$. Using this, and the estimates above one shows the first statement of the theorem.  
\end{proof}

\section{Proof of Theorem \ref{CM_main_thm}}\label{CFM_section}

Proceeding as in the proof of  \cite{CMbook}*{Proposition 2}, let us start by decomposing the symbol $\Symb$ as the sum of two symbols,
\[
    \Symb(\xi,\eta)=\tau_1(\xi,\eta)+\tau_2(\xi,\eta),
\]
where $\tau_1$ and $\tau_2$ still satisfy \eqref{CM_mult_property} and $\tau_1$ is supported in $\lbrace\abs{\xi}\geq\abs{\eta}/20\rbrace$, while $\tau_2$ is supported in $\lbrace\abs{\xi}\leq\abs{\eta}/10\rbrace$, 

Next we consider a Schwartz function $\psi$ which is frequency supported in the ring $\lbrace 4/5\leq\vert\xi\vert\leq 6/5\rbrace$ and satisfies
\begin{equation*}
    \int_0^\infty\vert\widehat{\psi}(t\xi)\vert^2\frac{\dd t}{t}=1
\end{equation*}
for all $\xi\neq 0$. In addition, we select a Schwartz function $\phi$ whose Fourier transform is supported in a ball, and is identically one in the frequency support of ${\psi}$. Then $\tau_1$ can be written as
\begin{align*}
    T_{\tau_1}(f,g)(x)=\int_0^\infty\!\!\!\!\int_{\R^{2n}}\tilde{\tau_1}(t\xi,t\eta)\widehat{Q_tf}(\xi)\widehat{P_tg}(\eta)e^{i(x\xi+x\eta)}\ddd\xi\ddd\eta\frac{\dd t}{t},
\end{align*}
where $\tilde{\tau_1}(\xi,\eta)=\tau_1(\xi/t,\eta/t)\widehat{\psi}(\xi)\widehat{\phi}(\eta).$

For a fixed $t>0$, the function $\tilde{\tau_1}$ is smooth and compactly supported away from the origin. Hence, for $N\in\N$, the Fourier inversion formula yields
\begin{equation}\label{ecu1.1}
    \tilde{\tau_1}(t\xi,t\eta)=\int_{\R^{2n}} m_1(t,u,v)e^{i(u\xi+v\eta)}\frac{\dd (u,v)}{(1+\vert u\vert^2+\vert v\vert^2)^N},
\end{equation}
where
\[
    m_1(t,u,v)=t^{-2n}\widehat{\tilde{\tau_1}}(u/t,v/t)(1+\vert u\vert^2+\vert v\vert^2)^N.
\]
By \eqref{ecu1.1}, the operator $T_{\tau_1}$ can be expressed as
\begin{align*}
    &T_{\tau_1}(f,g)(x)\\
    &=\int_{\R^{2n}}\!\int_0^\infty\!\!\!\!\int_{\R^{2n}} m_1(t,u,v)\widehat{Q_tf}(\xi)\widehat{P_tg}(\eta)e^{i(\xi(u+x)+\eta(v+x))}\dd(\xi,\eta)\frac{\dd t}{t}\frac{\dd(u,v)}{(1+\vert u\vert^2+\vert v\vert^2)^N}\\
    &=\int_{\R^{2n}}\!\int_0^\infty\!\!\!\!\int_{\R^{2n}} m_1(t,u,v)\widehat{Q^{u}_tf}(\xi)\widehat{P^{v}_tg}(\eta)e^{i(\xi x+\eta x)}\dd(\xi,\eta)\frac{\dd t}{t}\frac{\dd (u,v)}{(1+\vert u\vert^2+\vert v\vert^2)^N}\\
    &=\int_{\R^{2n}}\!\int_0^\infty (Q^{u}_tf)(P^{v}_tg)m_1(t,u,v)\frac{\dd t}{t}\frac{\dd(u,v)}{(1+\vert u\vert^2+\vert v\vert^2)^N},
\end{align*}
where $Q^{u}_t$ and $P^v_t$ are the frequency localisation operator associated to $\psi^u(x):=\psi(x+u)$ and  $\phi^{v}(x):=\phi(x+v)$ respectively. An integration by parts argument, jointly with  \eqref{CM_mult_property} for $\tau_1$, shows that $m_1(t,u,v)$ is uniformly bounded in its three variables. Also we can show that for all $\delta>0$, 
\[
	\abs{\psi^u(x)}\lesssim \frac{(1+\abs{u})^{n+\delta}}{(1+\abs{x})^{n+\delta}}\quad {\rm and} \quad \abs{\phi^v(x)}\lesssim \frac{(1+\abs{v})^{n+\delta}}{(1+\abs{x})^{n+\delta}}.
\]
Next we observe that the bilinear operator
\begin{equation*}
    \Pi^{u,v}(f,g)(x)=\int_0^\infty (Q^{u}_tf)(P^v_tg)m_1(t,u,v)\frac{\dd t}{t}
\end{equation*}
is similar to those studied in Section \ref{SecPara}. Proceeding as in \eqref{eq:decomp}, we can write 
\[
    \Pi^{u,v}(f,g)=\Pi^{u,v}_{1}(f,g)+\Pi^{u,v}_{2}(f,g).
\]

Let us show the validity of part \ref{itm:ecu23} in the statement. To this end, fix two functions $f\in H^1(\R^n)$ and $g\in X_w(\R^n)$. Theorem~\ref{bmo_Lp}\ref{itm:ecu_H1_bmo} shows that
\begin{equation}\label{ecu25}
\norm{\Pi^{u,v}_{1}(f,g)}_{\J_{\Rw} H^1(\R^n)}\lesssim \mathcal{P}^{(1)}(u,v)\norm{f}_{H^1(\R^n)}\norm{g}_{X_w(\R^n)},
\end{equation}
and $\Pi^{u,v}_{2}$ satisfies
\begin{equation}\label{ecu24}
\norm{\Pi^{u,v}_{2}(f,g)}_{L^1(\R^n)}\lesssim \mathcal{P}^{(2)}(u,v)\norm{f}_{H^1(\R^n)}\norm{g}_{X_w(\R^n)}.
\end{equation}
Here $\mathcal{P}^{(1)}(u,v)$ and $\mathcal{P}^{(2)}(u,v)$ are polynomials in $\abs{u}$ and $\abs{v}$, independent of the functions $f$ and $g$.

Consequently, the operator $T_{\tau_1}$ can be written as $\tau_1=\tau_1^{(1)}+\tau_1^{(2)}$, where
\[
    T_{\tau_1^{(i)}}(f,g)(x)=\int_{\R^{2n}}\Pi^{u,v}_{i}(f,g)(x)\frac{\dd(u,v)}{(1+\vert u\vert^2+\vert v\vert^2)^N}, \quad i=1,2.
\]
In particular, by choosing $N$ large enough, the Minkowskii integral inequality, \eqref{ecu25} and \eqref{ecu24} yield 
\[
    \norm{T_{\tau_1^{(1)}}(f,g)}_{\J_{\Rw}(H^1(\R^n))}\lesssim\norm{f}_{H^1(\R^n)}\norm{g}_{X_w(\R^n)}
\]
and
\[
    \norm{T_{\tau_1^{(2)}}(f,g)}_{L^1(\R^n)}\lesssim\norm{f}_{H^1(\R^n)}\norm{g}_{X_w(\R^n)}.
\]

Proceeding in a similar same way as we did for $T_{\tau_1}$, but interchanging the roles of $\xi$ and $\eta$, we have
that we can write the operator  $T_{\tau_2}$ as 
\[
    T_{\tau_2}(f,g)(x)=\int_{\R^{2n}}\int_0^\infty(P^u_tf)(Q^v_tg)m_2(t,u,v)\frac{\dd t}{t}\frac{\dd(u,v)}{(1+\vert u\vert^2+\vert v\vert^2)^N}.
\]
Lemma  \ref{lem:faciles}\ref{Para_H1_Lp} yields that the paraproduct
\[
    \Pi^{u,v}(g,f)(x)=\int_0^\infty(P^u_tf)(Q^v_tg)m_2(t,u,v)\frac{\dd t}{t}
\]
satisfies the estimate 
\[
    \norm{\Pi^{u,v}(g,f)}_{L^1}\lesssim \mathcal{P}(u,v) \norm{f}_{H^1(\R^n)}\norm{g}_{X_w(\R^n)},
\]
where $\mathcal{P}(u,v)$ is a polynomial in $\abs{u}$ and $\abs{v}$, independent of $f$ and $g$. By choosing $N$ large enough, and using the Minkowskii integral inequality, we conclude that 
\[
    \norm{T_{\tau_2}(f,g)}_{L^1(\R^n)}\lesssim\norm{f}_{H^1(\R^n)}\norm{g}_{X_w(\R^n)}.
\]
The proof of part \ref{itm:ecu23} in the statement finishes by taking $\Symb_g=\tau_1^{(2)}+\tau_2$ and $\Symb_b=\tau_1^{(1)}$.

Part \ref{itm:ecu22} of the theorem is proved analogously by combining Lemma \ref{lem:faciles}\ref{Para_BMO_Lp} and Theorem \ref{bmo_Lp}\ref{itm:ecu16}.

\section{Applications}\label{Applications}

In this section we will derive some consequences  from Corollary \ref{Cor:main_bmo}. Namely, we will give some endpoint inequalities of Kato-Ponce-type missing in the literature, and the reconstruction of the product of functions in the local space version of $\bmo(\R^n)$ and the local Hardy space $h^1(\R^n)$.

Throughout this section we shall fix the admissible weight
\[
    w_1(t)=1+\log_+1/t
\]
and $\Rw_1$ denotes its regularisation from Definition \ref{reg_logb}.

\subsection{Kato-Ponce inequalities}

For a given real number $s$ we define the fractional Laplacian operator $\J^{s}=(1-\Delta)^{s/2}$ as
\[
    (\widehat{\J^{s}f})(\xi):=(1+\abs{\xi}^2)^{s/2}\widehat{f}(\xi),\quad \xi\in\R^n,\quad f\in\SW(\R^n).
\]
Some Kato-Ponce-type estimates (see \cite{Koezuka_Tomita}*{Corollary 1.2} or  \cite{Naibo-Thomson}*{Corollary 2.6}) are known for local Hardy spaces. In particular, for $1\leq p<\infty$, the endpoint estimate
\begin{equation*} %
    \norm{J^{s}(fg)}_{h^p(\R^n)}\lesssim\norm{J^{s}f}_{h^{p}(\R^n)}\norm{g}_{L^{\infty}(\R^n)}+\norm{f}_{h^{p}(\R^n)}\norm{J^{s}g}_{L^{\infty}(\R^n)}
\end{equation*}
is known to hold for all $f,g\in\SW(\R^n)$, provided $s>0$.

Corollary \ref{Cor:main_bmo} allows us to extend this estimate to the case where one replaces the $L^\infty(\R^n)$ norm, by $\bmo(\R^n)$, provided that $s$ is large enough. More precisely, we obtain the following result.
\begin{cor}\label{Kato-Ponce}
Let $s>4n+1$. For all  $1<p<\infty$, 
\[
    \norm{J^{s}(fg)}_{\J_{\Rw_1}(L^p(\R^n))}\lesssim \norm{J^{s}f}_{L^p(\R^n)}\norm{g}_{\bmo(\R^n)}+\norm{f}_{L^p(\R^n)}\norm{J^{s}g}_{\bmo(\R^n)}
\]
holds for every $f,g\in\SW(\R^n)$. 
Moreover, it holds that
\[
   \norm{J^{s}(fg)}_{L^1(\R^n)+\J_{\Rw_1}(H^1(\R^n))}\lesssim\norm{J^{s}f}_{H^1(\R^n)}\norm{g}_{\bmo(\R^n)}+\norm{f}_{H^1(\R^n)}\norm{J^{s}g}_{\bmo(\R^n)}
\]
for every $f,g\in\SW(\R^n)$. 
\end{cor}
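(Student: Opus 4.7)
My plan is to reduce both inequalities to direct applications of Corollary \ref{Cor:main_bmo}, via a symbolic decomposition of the Bessel potential symbol $(1+|\xi+\eta|^2)^{s/2}$ which is standard in the Kato-Ponce literature. The idea is to express $\J^s(fg)$ as a sum of two bilinear Coifman-Meyer multipliers, in one of which $\J^s$ has been absorbed onto $f$, and in the other onto $g$.

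Concretely, I would fix a smooth partition of unity $\chi_1+\chi_2\equiv 1$ on $\R^{2n}\setminus\{0\}$, homogeneous of degree zero, with $\chi_1$ supported in $\{|\eta|\leq 2|\xi|\}$ and $\chi_2$ supported in $\{|\xi|\leq 2|\eta|\}$, and set
\[
    \sigma_1(\xi,\eta):=\chi_1(\xi,\eta)\frac{\langle\xi+\eta\rangle^s}{\langle\xi\rangle^s},\qquad \sigma_2(\xi,\eta):=\chi_2(\xi,\eta)\frac{\langle\xi+\eta\rangle^s}{\langle\eta\rangle^s}.
\]
Since $\chi_1+\chi_2=1$, the Fourier definition of $\J^s$ immediately yields the decomposition
\[
    \J^s(fg)=T_{\sigma_1}(\J^s f,g)+T_{\sigma_2}(f,\J^s g).
\]

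The key technical point is to verify that $\sigma_1$ (and, symmetrically, $\sigma_2$) satisfies the Coifman-Meyer estimate \eqref{CM_mult_property} for every pair of multi-indices $\alpha,\beta$ with $|\alpha|+|\beta|\leq 4n+1$; by Remark \ref{min_der}, this suffices to apply Corollary \ref{Cor:main_bmo}. The bound is trivial for $(\xi,\eta)$ in a bounded neighbourhood of the origin, since $\sigma_1$ is smooth while the right-hand side of \eqref{CM_mult_property} is large there. For large $|\xi|+|\eta|$, the Leibniz rule together with the change of variables $u=\xi+\eta$, $v=\xi$ reduces the estimate to controlling expressions of the form $\langle u\rangle^{s-k}\langle v\rangle^{-s-\ell}$ with $k+\ell=|\alpha|+|\beta|$; on the support of $\chi_1$ one has $|u|\lesssim|v|\approx|\xi|+|\eta|$, and a case split according to the sign of $s-k$ gives the target bound $\lesssim(|\xi|+|\eta|)^{-|\alpha|-|\beta|}$ precisely when $s>|\alpha|+|\beta|$. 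This is exactly where the hypothesis $s>4n+1$ enters.

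Granting this, both displayed inequalities follow by direct application of Corollary \ref{Cor:main_bmo} with the weight $w_1$ (so that $X_{w_1}(\R^n)=\bmo(\R^n)$). For the first inequality, part (1) of the corollary applied to each of $T_{\sigma_1}(\J^s f,g)$ and $T_{\sigma_2}(f,\J^s g)$ yields the two summands on the right-hand side. For the second inequality, part (2) produces decompositions $\sigma_j=\sigma_{j,g}+\sigma_{j,b}$ such that the $\sigma_{j,g}$-parts map $H^1(\R^n)\times\bmo(\R^n)$ into $L^1(\R^n)$ and the $\sigma_{j,b}$-parts into $\J_{\Rw_1}(H^1(\R^n))$; summing, $\J^s(fg)$ decomposes into one piece in $L^1(\R^n)$ and one in $\J_{\Rw_1}(H^1(\R^n))$, as required. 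The main obstacle is the derivative bookkeeping for $\sigma_1,\sigma_2$ sketched above; it is routine but careful, and it is what forces the explicit threshold $s>4n+1$.
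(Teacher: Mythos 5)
Your proposal is correct and reaches the result by the same overall mechanism as the paper (reduce to Corollary \ref{Cor:main_bmo} after checking \eqref{CM_mult_property} only up to order $4n+1$, as permitted by Remark \ref{min_der}), but via a different decomposition. The paper imports the three-term splitting $J^s(fg)=B^s_1(J^sf,g)+B^s_2(f,J^sg)+B^s_3(f,J^sg)$ from Grafakos--Oh, where $B^s_1,B^s_2$ are genuine Coifman--Meyer symbols of infinite order and only the diagonal piece $B^s_3$ has the limited regularity $|\alpha|+|\beta|\le s$ (checked as in Grafakos--Maldonado--Naibo); you instead build a self-contained two-term splitting with homogeneous cutoffs $\chi_1,\chi_2$, at the price that \emph{both} of your symbols $\sigma_1,\sigma_2$ are only of limited order, since each support region contains the antidiagonal $\xi+\eta=0$ where derivatives of $\esc{\xi+\eta}^s$ lose powers of $\esc{\xi+\eta}$ rather than of $|\xi|+|\eta|$. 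That is harmless here because Remark \ref{min_der} only asks for $4n+1$ derivatives, and your case analysis correctly identifies $s\ge|\alpha|+|\beta|$ as the condition under which $\esc{\xi+\eta}^{s-k}\esc{\xi}^{-s-\ell}\lesssim(|\xi|+|\eta|)^{-|\alpha|-|\beta|}$ on $\mathrm{supp}\,\chi_1$; so your route is marginally more economical (no citation needed for the decomposition) while the paper's isolates the diagonal obstruction more transparently. One small imprecision: near the origin $\sigma_1$ is \emph{not} smooth (a nonconstant degree-zero homogeneous $\chi_1$ is discontinuous at $0$), so the bound there is not "trivial by smoothness"; it follows instead because $\partial^\gamma\chi_1=O((|\xi|+|\eta|)^{-|\gamma|})$ and $(|\xi|+|\eta|)^{-|\gamma|}\le(|\xi|+|\eta|)^{-|\alpha|-|\beta|}$ when $|\xi|+|\eta|\le1$, while the factor $\esc{\xi+\eta}^s\esc{\xi}^{-s}$ has bounded derivatives there. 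This does not affect the validity of the argument.
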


\begin{proof}
Following the argument in \cite{Grafakos-Oh}*{Theorem 1}, one decomposes $J^s(fg)$ as 
\[
    J^s(fg):=B^s_1(J^sf,g)(x)+B^s_2(f,J^sg)(x)+B^s_3(f,J^sg)(x),
\]
where $B^s_j$ are bilinear Fourier multipliers. It was also shown there, that the symbol of both $B^s_1$ and $B^s_2$ satisfy  \eqref{CM_mult_property} for all multi-indices $\alpha,\beta$.

Moreover,  following the argument in \cite{Gra-Mal_Nai}* {Section 4}, one shows that $B_3^s$ satisfies \eqref{CM_mult_property} for $|\alpha|+|\beta|\leq s$. Then, in order to apply the results obtained in this paper, as it was pointed out in Remark \ref{min_der}, we require $s>4n+1$.

Then, the result is a direct application of  Corollary \ref{Cor:main_bmo} to each term.

\end{proof}

\subsection{Product of functions}

The Corollary \ref{Cor:main_bmo} above enables us to study the product of a function in $\bmo(\R^n)$ with a function in either the Lebesgue space $L^p(\R^n)$, with $1<p<\infty$, the Hardy space $H^1(\R^n)$ or the local Hardy space $h^1(\R^n)$.

Indeed, taking the symbol $\sigma\equiv 1$, the following result, that corresponds to Corollary \ref{Kato-Ponce} for $s=0$, follows directly from Corollary \ref{Cor:main_bmo}.

\begin{cor}\label{cor:Lp_potential_prod} Let $1<p<\infty$. Then, for all $f\in L^p(\R^n)$ and $g\in\bmo(\R^n)$ it holds that
    \[
        \norm{fg}_{\J_{\Rw_1}(L^p(\R^n))}\lesssim\norm{f}_{L^p(\R^n)}\norm{g}_{\bmo(\R^n)}.
    \]
\end{cor}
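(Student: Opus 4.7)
The plan is to apply Corollary \ref{Cor:main_bmo}(1) directly to the constant symbol $\Symb \equiv 1$. First I would verify that $\Symb \equiv 1$ is a valid Coifman-Meyer multiplier by checking \eqref{CM_mult_property}: for $(\alpha,\beta) \neq (0,0)$ the derivatives vanish and the estimate holds trivially, while for $\alpha = \beta = 0$ one has $|\Symb(\xi,\eta)| = 1 \lesssim 1$. In fact only finitely many derivatives are needed, as observed in Remark \ref{min_der}, so the condition is satisfied in the strongest sense.

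Next I would identify $T_1$ with the pointwise product operator. For Schwartz functions this is immediate from the Fourier inversion formula:
\[
    T_1(f,g)(x) = \iint \widehat{f}(\xi)\widehat{g}(\eta) e^{ix(\xi+\eta)} \ddd\xi \ddd\eta = f(x) g(x).
\]
On the product space $L^p(\R^n) \times \bmo(\R^n)$ the bilinear operator $T_1$ is defined through the paraproduct decomposition of Section \ref{SecPara}, and this realisation coincides with the pointwise product (which is locally integrable since $\bmo \subset L^q_{\mathrm{loc}}$ for all $q<\infty$ and H\"older applies).

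Finally I would use Proposition \ref{prop:gather}(4) to identify $X_{w_1}(\R^n) = \bmo(\R^n)$ with equivalent norms, so that Corollary \ref{Cor:main_bmo}(1) applies with $g \in \bmo(\R^n)$. Combining the three observations yields
\[
    \norm{fg}_{\J_{\Rw_1}(L^p(\R^n))} = \norm{T_1(f,g)}_{\J_{\Rw_1}(L^p(\R^n))} \lesssim \norm{f}_{L^p(\R^n)} \norm{g}_{\bmo(\R^n)},
\]
which is the claimed estimate.

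There is no real obstacle here: the only point deserving care is the justification that $T_1(f,g)$ coincides with $fg$ on $L^p \times \bmo$, which is handled by the paraproduct decomposition \eqref{eq:decomp} (both summands $\Pi_1, \Pi_2$ are well defined on the pair, and their sum reconstructs the product). Everything else is a direct invocation of Corollary \ref{Cor:main_bmo}(1).
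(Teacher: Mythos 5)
Your proposal is correct and follows exactly the paper's own (one-line) argument: take $\Symb\equiv 1$, note it satisfies \eqref{CM_mult_property} trivially, identify $T_1(f,g)$ with the pointwise product $fg$, and invoke Corollary \ref{Cor:main_bmo}(1) using the identification $X_{w_1}(\R^n)=\bmo(\R^n)$. The extra care you take in justifying that $T_1(f,g)=fg$ beyond Schwartz functions is a detail the paper leaves implicit, but it does not change the route.
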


The case where one of the terms belongs to $h^1(\R^n)$ and the other one to $\bmo(\R^n)$, is not a direct consequence of our results. Nevertheless, the relation between $H^1(\R^n)$  and its local version, as well as some properties of the latter space and local $\bmo(\R^n)$ allows to obtain the following.  

\begin{cor}\label{teo_prod} 
    There exist two continuous bilinear operators on the product space $h^1(\R^n)\times \bmo(\R^n)$,  respectively $B_1:h^1(\R^n)\times \bmo(\R^n)\to L^1(\R^n)$ and $B_2:h^1(\R^n)\times \bmo(\R^n)\to J_{\Rw_1}( H^1(\R^n))$, such that 
    \[
        fg=B_1(f,g)+B_2(f,g).
    \]
\end{cor}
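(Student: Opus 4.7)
The plan is to reduce the claim to the $H^1(\R^n)\times\bmo(\R^n)$ case already covered by Corollary \ref{Cor:main_bmo}, via a classical Goldberg-type splitting of $h^1(\R^n)$. Fix a Schwartz function $\phi$ with $\widehat{\phi}$ smooth, compactly supported, and satisfying $\widehat{\phi}(0)=1$, and set $Lf:=\phi*f$ and $Hf:=f-\phi*f$. By Goldberg's decomposition \cite{Goldberg}, these are bounded linear operators with $L:h^1(\R^n)\to L^1(\R^n)\cap L^\infty(\R^n)$ and $H:h^1(\R^n)\to H^1(\R^n)$. Since $f\in h^1(\R^n)\subset L^1(\R^n)$ and $g\in\bmo(\R^n)\subset L^1_{\mathrm{loc}}(\R^n)$, the identity
\[
    fg=(Hf)g+(Lf)g
\]
holds pointwise almost everywhere.

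For the term $(Hf)g$, I would invoke Corollary \ref{Cor:main_bmo}(2) applied to the trivial symbol $\Symb\equiv 1$ (which clearly satisfies \eqref{CM_mult_property} and for which $T_\Symb(u,v)=uv$ is the pointwise product), using that $X_{w_1}(\R^n)=\bmo(\R^n)$ by Proposition \ref{prop:gather}. This provides a symbol splitting $1=\Symb_g+\Symb_b$ and two continuous bilinear operators $\widetilde B_1:H^1(\R^n)\times\bmo(\R^n)\to L^1(\R^n)$ and $\widetilde B_2:H^1(\R^n)\times\bmo(\R^n)\to\J_{\Rw_1}(H^1(\R^n))$ such that $(Hf)g=\widetilde B_1(Hf,g)+\widetilde B_2(Hf,g)$, with norms controlled by $\|Hf\|_{H^1(\R^n)}\|g\|_{\bmo(\R^n)}\lesssim\|f\|_{h^1(\R^n)}\|g\|_{\bmo(\R^n)}$.

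For the term $(Lf)g$, I would show directly that it belongs to $L^1(\R^n)$ with norm controlled by $\|f\|_{h^1(\R^n)}\|g\|_{\bmo(\R^n)}$. The core ingredient is the uniform $\bmo$-averaging bound
\[
    \sup_{y\in\R^n}\int_{\R^n}|\phi(y-x)|\,|g(x)|\,\dd x\lesssim\|g\|_{\bmo(\R^n)},
\]
obtained by decomposing the integral into dyadic shells around $y$ and combining the Schwartz decay of $\phi$ with the polynomial growth bound $\int_{B(y,R)}|g(x)|\,\dd x\lesssim R^n\|g\|_{\bmo(\R^n)}$ for $R\geq 1$, which is a direct consequence of the second supremum in the definition \eqref{Zipi} of the $\bmo$ norm. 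Combining with $|Lf|\leq|\phi|*|f|$, $f\in L^1(\R^n)$, and Fubini, this gives $\|(Lf)g\|_{L^1(\R^n)}\lesssim\|f\|_{L^1(\R^n)}\|g\|_{\bmo(\R^n)}\leq\|f\|_{h^1(\R^n)}\|g\|_{\bmo(\R^n)}$.

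Setting $B_1(f,g):=(Lf)g+\widetilde B_1(Hf,g)$ and $B_2(f,g):=\widetilde B_2(Hf,g)$ then produces continuous bilinear operators with the required mapping properties and $fg=B_1(f,g)+B_2(f,g)$. The main technical hurdle is the Goldberg splitting itself, namely the continuity of $I-\phi*$ as an operator from $h^1(\R^n)$ into $H^1(\R^n)$; once this classical result is invoked, the rest of the argument is a clean combination of the main theorem of this paper with the elementary $\bmo$-averaging estimate above.
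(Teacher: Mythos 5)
Your proposal is correct, and its overall architecture is the same as the paper's: split $f=\phi\ast f+(f-\phi\ast f)$, use Goldberg's lemma to place the high-frequency part in $H^1(\R^n)$ with $\norm{f-\phi\ast f}_{H^1(\R^n)}\lesssim\norm{f}_{h^1(\R^n)}$, and feed $(f-\phi\ast f)g$ into Corollary \ref{Cor:main_bmo} with the symbol $\Symb\equiv 1$; the operators $B_1,B_2$ are then assembled exactly as you describe. The only genuine divergence is in the low-frequency term $(\phi\ast f)g$. The paper partitions $\R^n$ into cubes $Q_i$ of small fixed sidelength, bounds $\int_{Q_i}\abs{\phi\ast f}\,\abs{g}$ by $\sup_{Q_i}\abs{\phi\ast f}\cdot\int_{\tilde Q_i}\abs{g}$ with $\tilde Q_i$ of sidelength $1$, and controls $\sum_i\sup_{Q_i}\abs{\phi\ast f}$ by the truncated nontangential maximal function defining the $h^1$ norm in \eqref{def:norm_h1}. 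You instead prove the uniform averaging bound $\sup_y\int\abs{\phi(y-x)}\abs{g(x)}\,\dd x\lesssim\norm{g}_{\bmo(\R^n)}$ via dyadic shells (which is correct: the Schwartz decay gives a factor $2^{-k(n+1)}$ against the growth $2^{kn}\norm{g}_{\bmo(\R^n)}$ coming from the second supremum in \eqref{Zipi}), and then conclude by Tonelli using only $f\in L^1(\R^n)$. Both arguments rest on the same feature of the local $\bmo$ norm, but yours is somewhat more elementary and yields the formally stronger bound $\norm{(\phi\ast f)g}_{L^1(\R^n)}\lesssim\norm{f}_{L^1(\R^n)}\norm{g}_{\bmo(\R^n)}$, whereas the paper's cube argument records the sharper $\ell^1$-type information $\sum_i\sup_{Q_i}\abs{\phi\ast f}\lesssim\norm{f}_{h^1(\R^n)}$; for the purposes of this corollary the two are interchangeable since $h^1(\R^n)\subset L^1(\R^n)$ continuously.
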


\begin{proof}
 Pick $\Phi$ any Schwartz function for which $\int\Phi=1$. Let us consider a function $f$ in $h^1(\R^n)$ and a function $g$ in $\bmo(\R^n)$. So we can decompose $f$ as
\[
    f=(\Phi\ast f)+(f-\Phi\ast f)=:\mathcal{L}f+\mathcal{H}f.
\]
So, at least formally, we have that we could define the product of $f$ and $g$ as
\begin{equation}\label{def_prod}
    fg:=(\mathcal{L}f) g+ (\mathcal{H}f) g,
\end{equation}
provided we could make sense of the right hand side term of this expression. 

Notice first that  \cite{Goldberg}*{Lemma 4} implies that
\begin{equation}\label{boundedness}
     \norm{\mathcal{H}f}_{H^1(\R^n)}
    \lesssim\norm{f}_{h^1(\R^n)}.
\end{equation}
We can interpret the second term in \eqref{def_prod} as a bilinear Coifman-Meyer multiplier, where the symbol is identically one, acting on the product space $H^1(\R^n)\times \bmo(\R^n)$. In this way, Corollary \ref{Cor:main_bmo} provides a decomposition
\[
    (\mathcal{H} f)g=B_1^{(1)}(f,g)+B_2(f,g)
\]
where $B_1^{(1)}$ and $B_2$ are two bilinear operators satisfying the estimates
\[
    \norm{B_1^{(1)}(f,g)}_{L^1(\R^n)}\lesssim\norm{\mathcal{H} f}_{H^1(\R^n)}\norm{g}_{\bmo(\R^n)}\lesssim\norm{f}_{h^1(\R^n)}\norm{g}_{\bmo(\R^n)}
\]
and
\[
    \norm{B_2(f,g)}_{\J_{\Rw_1}(H^1(\R^n))}\lesssim\norm{\mathcal{H}f}_{H^1(\R^n)}\norm{g}_{\bmo(\R^n)}\lesssim\norm{f}_{h^1(\R^n)}\norm{g}_{\bmo(\R^n)}.
\]

The first term in \eqref{def_prod} can we written as  $B_1^{(2)}(f,g):=(\mathcal{L}f)g$. We shall see that this term is actually a function in $L^1(\R^n)$. Indeed, let $\lbrace Q_i\rbrace_{i\geq 0}$ be a countable collection of cubes, all of them with fixed sidelength $\ell\leq (4n)^{-1/2}$, independent on $i$, such that it gives a partition of $\R^n$. For all $i\geq 0$, we denote by  $\tilde{Q}_i$ the dilation of the cube $Q_i$ with sidelength $1$. Then we have that
{
\begin{equation}\label{ecu30}
\begin{split}
    \int &\abs{(\Phi\ast f)(x)}\abs{g(x)}\dd x=\sum_{i=0}^\infty\int_{Q_i}\abs{(\Phi\ast f)(x)}\abs{g(x)}\dd x
    \\
    &\leq\sum_{i=0}^\infty\sup_{x\in Q_i}\abs{(\Phi\ast f)(x)}\int_{Q_i}\abs{g(x)}\dd x\leq \sum_{i=0}^\infty\sup_{x\in Q_i}\abs{(\Phi\ast f)(x)}\int_{\tilde{Q}_i}\abs{g(x)}\dd x \\ 
    & %
    \leq \brkt{\sum_{i=0}^\infty\sup_{x\in Q_i}\abs{(\Phi\ast f)(x)}} \norm{g}_{\bmo(\R^n)},
\end{split}
\end{equation}
where the last inequality follows from \eqref{Zipi}. %

Let $\Gamma(x)$ be the truncated nontangential cone
\[
    \Gamma(x)=\lbrace(y,t)\in\R^n\times(0,1/2):\abs{x-y}<t\rbrace,\quad x\in\R^n.
\]
For every $i\geq 0$ and for all $x,\tilde{x}\in Q_i$, it holds that
\[
    \abs{x-\tilde{x}}\leq \sqrt{n}\ell<1/2.
\]
Let $\Psi(x):=\Phi(x/4)4^{-n}$. It follows that for all $x,\tilde{x}\in Q_i$
\[
    \abs{(\Phi\ast f)(\tilde{x})}=\abs{(\Psi_{1/4}\ast f)(\tilde{x})}\leq\sup_{(y,t)\in\Gamma(x)}\abs{(\Psi_{t}\ast f)(y)}.
\] 
Taking supremum on $\tilde{x}$, and integrating on $x\in Q_i$ in both sides of the last inequality, it follows that
\begin{equation*} %
    \sup_{\tilde{x}\in Q_i}\abs{(\Phi\ast f)(\tilde{x})}\lesssim\int_{Q_i}\sup_{(y,t)\in\Gamma(x)}\abs{(\Psi_t\ast f)(y)}\dd x.
\end{equation*}
This inequality, the fact that $\{Q_i\}_{i\geq 0}$ form a partition of $\R^n$ and \eqref{def:norm_h1}, imply that 
\[
    {\sum_{i=0}^\infty\sup_{x\in Q_i}\abs{(\Phi\ast f)(x)}}\lesssim \int\sup_{(y,t)\in\Gamma(x)}\abs{(\Psi_t\ast f)(y)}\dd x\approx\norm{f}_{h^1(\R^n)},
\]
where we are using the independence of the $h^1(\R^n)$ norm on the chosen function $\Psi$.  Finally, using this in \eqref{ecu30}, we obtain that 
\[
    \norm{B_1^{(2)}(f,g)}_{L^1(\R^n)}\lesssim \norm{f}_{h^1(\R^n)}\norm{g}_{\bmo(\R^n)}.
\]
}
To finish the proof, it is enough to define the bilinear operator $B_1:=B_1^{(1)}+B_1^{(2)}$.
\end{proof}
The boundedness of the second term in \eqref{def_prod}, was obtained by applying  the results of  Corollary \ref{Cor:main_bmo} above, as it was expressed as a bilinear Coifman-Meyer multiplier whose symbol is identically one.  However, since by \eqref{boundedness}, $\mathcal{H}f$ defines a function in $H^1(\R^n)$ and $\bmo(\R^n)\subset \BMO(\R^n)$, we could use instead 
\cite{Bon-Gre-Ky}*{Theorem 1.1} to show that 
\[
    (\mathcal{H}f)g=S(\mathcal{H}f,g)+T(\mathcal{H}f,g),
\]
where $S:H^1(\R^n)\times\BMO(\R^n)\rightarrow L^1(\R^n)$ and $T:H^1(\R^n)\times\BMO(\R^n)\rightarrow H^{\log}(\R^n)$. Here $H^{\log}(\R^n)$ denotes the Musielak-Orlicz-Hardy space defined in \cite{Bon-Gre-Ky}, associated  to the Musielak-Orlicz function
    \begin{equation}\label{growth_funct}
        \varphi(x,t):=\frac{t}{\log(e+\abs{x})+\log(e+t)},\quad x\in\R^n,t>0.
    \end{equation}
Using this, one obtains the following result.
\begin{cor}\label{teo_prod_2}
    There exist two continuous bilinear operators on the product space $h^1(\R^n)\times \bmo(\R^n)$,  respectively $B_1:h^1(\R^n)\times \bmo(\R^n)\to L^1(\R^n)$ and $B_2:h^1(\R^n)\times \bmo(\R^n)\to H^{\log}(\R^n)$, such that 
    \[
        fg=B_1(f,g)+B_2(f,g).
    \] 
\end{cor}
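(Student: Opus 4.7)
The approach is essentially dictated by the remark preceding the statement, combined with the decomposition strategy used in the proof of Corollary \ref{teo_prod}. The plan is to split the first argument $f \in h^1(\R^n)$ via a smooth truncation into a low-frequency part (lying in $L^\infty\cap$ a good space) and a high-frequency part (lying in $H^1(\R^n)$), and then handle each piece separately.

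More concretely, fix a Schwartz function $\Phi$ with $\int\Phi=1$ and write $f=\mathcal{L}f+\mathcal{H}f$ where $\mathcal{L}f:=\Phi\ast f$ and $\mathcal{H}f:=f-\Phi\ast f$, so that
\[
    fg=(\mathcal{L}f)g+(\mathcal{H}f)g.
\]
For the first summand, I would simply repeat verbatim the estimate carried out in the proof of Corollary \ref{teo_prod}: the nontangential cone argument together with the cube-partition estimate \eqref{ecu30} already shows that $(\mathcal{L}f)g\in L^1(\R^n)$ with norm controlled by $\norm{f}_{h^1(\R^n)}\norm{g}_{\bmo(\R^n)}$. This directly produces one contribution $B_1^{(2)}(f,g):=(\mathcal{L}f)g$ to $B_1$.

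For the second summand, the estimate \eqref{boundedness} from \cite{Goldberg} guarantees $\mathcal{H}f\in H^1(\R^n)$ with norm bounded by $\norm{f}_{h^1(\R^n)}$, and the embedding $\bmo(\R^n)\subset\BMO(\R^n)$ is continuous. The key step is then to invoke \cite{Bon-Gre-Ky}*{Theorem 1.1}, which gives bilinear operators $S:H^1\times\BMO\to L^1$ and $T:H^1\times\BMO\to H^{\log}$ such that $(\mathcal{H}f)g=S(\mathcal{H}f,g)+T(\mathcal{H}f,g)$. Setting
\[
    B_1^{(1)}(f,g):=S(\mathcal{H}f,g),\qquad B_2(f,g):=T(\mathcal{H}f,g),
\]
the boundedness of $S$ and $T$, together with $\norm{\mathcal{H}f}_{H^1}\lesssim\norm{f}_{h^1}$ and $\norm{g}_{\BMO}\leq\norm{g}_{\bmo}$, yields the desired continuity.

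Finally, define $B_1:=B_1^{(1)}+B_1^{(2)}$ and $B_2$ as above; linearity in both arguments is inherited from each component, and $B_1+B_2$ recovers $fg$ by construction. I do not expect any substantive obstacle: the only mildly subtle point is confirming that the cube-partition argument used for $(\mathcal{L}f)g$ genuinely uses only $\norm{g}_{\bmo(\R^n)}$ (not $\norm{g}_{\BMO}$), which is exactly what the second supremum in the definition \eqref{Zipi} of the local $\bmo$ norm supplies.
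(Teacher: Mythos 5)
Your proposal is correct and follows essentially the same route as the paper: the authors reuse the decomposition $f=\mathcal{L}f+\mathcal{H}f$ and the $L^1$ bound for $(\mathcal{L}f)g$ from the proof of Corollary \ref{teo_prod}, and, exactly as you do, replace the application of Corollary \ref{Cor:main_bmo} to $(\mathcal{H}f)g$ by the Bonami--Grellier--Ky decomposition $S+T$ with $S:H^1\times\BMO\to L^1$ and $T:H^1\times\BMO\to H^{\log}$, using \eqref{boundedness} and the embedding $\bmo(\R^n)\subset\BMO(\R^n)$. Your closing observation that the cube-partition estimate genuinely needs the second supremum in \eqref{Zipi} is also the point the paper relies on.
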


\begin{rem}
    Related results to the previous corollary were obtained by  J. Cao, L.D. Ky and D. Tang \cite{Cao-Ky-Yang}*{Theorem 1.1 and Remark 3.1}.   More specifically, a decomposition of the product of functions in $h^1(\R^n)$ and $\bmo(\R^n)$ into two bilinear operators, one of them mapping continuously $h^1(\R^n)\times\bmo(\R^n)$ into $L^1(\R^n)$, and the other one mapping continuously $h^1(\R^n)\times\bmo(\R^n)$ into $h^{\log}(\R^n)$, where $h^{\log}(\R^n)$ is a local Musielak-Orlicz-Hardy space related to the growth function defined in \eqref{growth_funct}. 
    In particular, since the inclusion $H^{\log}(\R^n)\subseteq h^{\log}(\R^n)$ holds, we notice that Corollary \ref{teo_prod_2} above slightly improves  \cite{Cao-Ky-Yang}*{Theorem 1.1 and Remark 3.1}.
\end{rem}

\begin{rem}
    It would be an interesting problem to study the relationship between the potential space $\J_{\Rw}(H^1(\R^n))$ and the Musielak-Orlicz-Hardy space $H^{\log}(\R^n)$ appearing in \cite{Bon-Gre-Ky}, as well as the space $h^{\Phi}_*(\R^n)$ in \cite{Cao-Ky-Yang}*{Theorem 1.1}.  
\end{rem}
The following result can be obtained from either Corollary \ref{teo_prod}, or from Corollary \ref{Cor:main_bmo} by using the embedding $H^1(\R^n)\subseteq h^1(\R^n)$.
\begin{cor}
    There exist two continuous bilinear operators on the product space $H^1(\R^n)\times \bmo(\R^n)$,  respectively $B_1:H^1(\R^n)\times \bmo(\R^n)\to L^1(\R^n)$ and $B_2:H^1(\R^n)\times \bmo(\R^n)\to J_{\Rw_1}( H^1(\R^n))$, such that 
        \[
            fg=B_1(f,g)+B_2(f,g).
        \]
\end{cor}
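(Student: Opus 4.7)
The plan is to simply specialise the main corollary (Corollary \ref{Cor:main_bmo}) to the constant symbol $\Symb \equiv 1$. This symbol trivially satisfies the Coifman--Meyer condition \eqref{CM_mult_property} (all derivatives of order $\geq 1$ vanish, and the undifferentiated estimate is the bound $1 \lesssim (|\xi|+|\eta|)^{0}$). A direct computation with the Fourier inversion formula shows that $T_1(f,g)(x) = f(x) g(x)$ for Schwartz $f,g$, so the bilinear multiplier $T_1$ is precisely pointwise multiplication.

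With this in hand, I would apply part (2) of Corollary \ref{Cor:main_bmo} to $\Symb \equiv 1$: this yields a decomposition $1 = \Symb_g + \Symb_b$ and the operator estimates
\[
  \norm{T_{\Symb_g}(f,g)}_{L^1(\R^n)} \lesssim \norm{f}_{H^1(\R^n)} \norm{g}_{\bmo(\R^n)}, \qquad
  \norm{T_{\Symb_b}(f,g)}_{\J_{\Rw_1}(H^1(\R^n))} \lesssim \norm{f}_{H^1(\R^n)} \norm{g}_{\bmo(\R^n)},
\]
valid for all $f\in H^1(\R^n)$ and $g\in\bmo(\R^n)$. Setting $B_1 := T_{\Symb_g}$ and $B_2 := T_{\Symb_b}$ produces two bilinear operators on $H^1(\R^n) \times \bmo(\R^n)$, continuous into $L^1(\R^n)$ and $\J_{\Rw_1}(H^1(\R^n))$ respectively, and satisfying $fg = T_1(f,g) = B_1(f,g) + B_2(f,g)$, as required.

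Alternatively, one can deduce the statement from Corollary \ref{teo_prod}: Goldberg's result that $H^1(\R^n) \hookrightarrow h^1(\R^n)$ continuously allows us to feed any $f \in H^1(\R^n)$ into the bilinear operators constructed there, and the target spaces $L^1(\R^n)$ and $\J_{\Rw_1}(H^1(\R^n))$ match those required here. Both routes yield the same conclusion; the first one is marginally cleaner because it avoids passing through the splitting $f = \mathcal{L}f + \mathcal{H}f$ used in the $h^1$ proof.

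There is essentially no technical obstacle, the argument being a direct specialisation of previously established results. The only point that warrants care is checking that the pointwise product genuinely equals $T_1(f,g)$ in the distributional sense used throughout the paper; once that identification is made (and the Coifman--Meyer hypothesis on $\Symb \equiv 1$ is verified), the decomposition and the two operator bounds follow immediately from Corollary \ref{Cor:main_bmo}.
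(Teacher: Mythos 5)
Your proposal is correct and matches the paper's own (very brief) justification, which likewise notes that the corollary follows either directly from Corollary \ref{Cor:main_bmo} applied to the symbol $\Symb\equiv 1$ or from Corollary \ref{teo_prod} via the embedding $H^1(\R^n)\subseteq h^1(\R^n)$. Your remark about identifying the pointwise product with $T_1(f,g)$ in the distributional sense is the right point of care, and it is handled exactly as the paper does in the proof of Corollary \ref{teo_prod}.
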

        
\section{Appendix: The $L^2$ estimates for paraproducts}\label{Appendix}

Although the case $p=2$ is covered by interpolation in Theorem \ref{bmo_Lp}, we shall give here a direct and self-contained proof that relies only on the use of Plancherel's Theorem and quadratic estimates. More precisely, we shall prove the following result.

\begin{prop}  Let $w$ be an admissible  weight satisfying $\inf_{t>0} w(t)>0$ and let $\Rw$ be its regularisation given in Definition \ref{Regularised_weight}. There is a constant $C>0$ such that the estimate
    \begin{equation}\label{eq:L2}
        \norm{\Pi(f,g)}_{\J_{\Rw}(L^2(\R^n))}\leq C\norm{f}_{L^2(\R^n)}\norm{g}_{X_w(\R^n)}
    \end{equation}
    holds for all $f\in L^2(\R^n)$ and $g\in X_w(\R^n)$.
\end{prop}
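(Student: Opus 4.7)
The plan is to use the decomposition $\Pi = \Pi_1 + \Pi_2$ from \eqref{eq:Pi1} and \eqref{eq:Pi2}, and to estimate each piece by duality combined with quadratic estimates. By definition of the potential-type norm, $\norm{\Pi_i(f,g)}_{\J_{\Rw}(L^2(\R^n))} = \norm{\J_{\Rw^{-1}}\Pi_i(f,g)}_{L^2(\R^n)}$, and by duality this equals $\sup_{\norm{h}_{L^2}\leq 1}\abs{\langle \Pi_i(f,g), \J_{\Rw^{-1}}h\rangle}$. After moving $Q_t^{(2)}$ (respectively $P_t^{(2)}$) across the pairing by self-adjointness, and applying the Cauchy--Schwarz inequality in the $(t,x)$-variables, the factor $\brkt{\iint \abs{Q_t f}^2\,\tfrac{\dd t}{t}\,\dd x}^{1/2}\lesssim \norm{f}_{L^2(\R^n)}$ comes out by Plancherel's theorem and the standard Littlewood--Paley quadratic estimate.

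For $\Pi_1$, the remaining term is $\iint \abs{P_t^{(1)} g(x)}^2\,\abs{Q_t^{(2)}\J_{\Rw^{-1}}h(x)}^2\,m(t)^2\,\tfrac{\dd t}{t}\,\dd x$. Using the pointwise bound $\abs{P_t^{(1)} g(x)}\lesssim w(t)\norm{g}_{X_w(\R^n)}$ supplied by Definition \ref{X_w}, matters reduce to proving $\iint w(t)^2 \abs{Q_t^{(2)} \J_{\Rw^{-1}}h}^2\,\tfrac{\dd t}{t}\,\dd x \lesssim \norm{h}_{L^2(\R^n)}^2$. Plancherel in $x$ rewrites the left-hand side as $\int \abs{\widehat{h}(\xi)}^2 \Rw(\xi)^{-2}\brkt{\int_0^\infty w(t)^2 \abs{\widehat{\psi^{(2)}}(t\xi)}^2 \,\tfrac{\dd t}{t}}\,\dd \xi$. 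Since $\widehat{\psi^{(2)}}$ is supported in an annulus, on its support $t\abs{\xi}\approx 1$, and hence \eqref{equivalence} combined with \eqref{sigma_equiv_weight} yields $w(t)\approx w(1/\abs{\xi})\approx \Rw(\xi)$ there. Pulling this factor out, the inner integral is bounded by $\Rw(\xi)^2\int_0^\infty \abs{\widehat{\psi^{(2)}}(t\xi)}^2\,\tfrac{\dd t}{t}\lesssim \Rw(\xi)^2$, which cancels $\Rw(\xi)^{-2}$.

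For $\Pi_2$, the analogous pairing leaves $\iint \abs{Q_t^{(1)} g}^2 \,\abs{P_t^{(2)} \J_{\Rw^{-1}}h}^2\,\tfrac{\dd t}{t}\,\dd x$. Theorem \ref{rem_BMO_CM} shows that $\abs{Q_t^{(1)} g(x)}^2\,\tfrac{\dd t}{t}\,\dd x$ is a Carleson measure of norm $\lesssim \norm{g}_{\BMO(\R^n)}^2\leq \norm{g}_{X_w(\R^n)}^2$, so Theorem \ref{Feff-Ste} reduces the task to controlling $\norm{F^*}_{L^2(\R^n)}$, where $F(y,t):= P_t^{(2)} \J_{\Rw^{-1}}h(y)$. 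Setting $\tilde h:= \J_{\Rw^{-1}}h$, Proposition \ref{prop:221} gives $\norm{\tilde h}_{L^2(\R^n)}\lesssim \norm{h}_{L^2(\R^n)}$ (the hypothesis $\inf_{t>0} w(t)>0$ is used precisely here), so that $P_t^{(2)}\J_{\Rw^{-1}}h=\phi^{(2)}_t\ast \tilde h$ is a Schwartz approximation of the identity applied to an $L^2$ function; the standard nontangential maximal estimate then gives $F^*(x)\lesssim M\tilde h(x)$, and the $L^2$-boundedness of the Hardy--Littlewood maximal operator closes the estimate.

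The main obstacle is the cancellation in the $\Pi_1$ step: the weight $w(t)$ forced on us by Definition \ref{X_w} must be absorbed by the Fourier symbol $\Rw(\xi)^{-1}$ produced by $\J_{\Rw^{-1}}$, and this is possible only because the frequency localisation $Q_t^{(2)}$ couples $t$ and $\xi$ through the relation $t\abs{\xi}\approx 1$ on the support of $\widehat{\psi^{(2)}}(t\xi)$, enabling the conversion of $w(t)$ into $\Rw(\xi)$ via \eqref{sigma_equiv_weight} and \eqref{equivalence}. Modulo this observation, the entire argument is a standard combination of Plancherel's identity, Carleson's embedding theorem, and the $L^2$ theory of the Hardy--Littlewood maximal function.
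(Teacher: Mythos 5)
Your argument is correct and follows essentially the same route as the paper's Appendix proof: the same splitting $\Pi=\Pi_1+\Pi_2$, duality against $\J_{\Rw^{-1}}h$, Cauchy--Schwarz against the Littlewood--Paley square function of $f$, the Plancherel computation converting $w(t)$ into $\Rw(\xi)$ on the annular support of $\widehat{\psi^{(2)}}(t\cdot)$ for $\Pi_1$, and the Carleson measure from Theorem \ref{rem_BMO_CM} plus the embedding of Theorem \ref{Feff-Ste} for $\Pi_2$. Your explicit control of the nontangential maximal function by $M(\J_{\Rw^{-1}}h)$ just fills in a step the paper leaves implicit via \eqref{eq:boundedness_H}.
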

\begin{proof}
By the decomposition \eqref{eq:decomp}, it is enough to prove the corresponding estimates for both $\Pi_1$ and $\Pi_2$ defined in \eqref{eq:Pi1} and \eqref{eq:Pi2} respectively. 

To this end, let $f\in L^2(\R^n)$, $g\in X_w(\R^n)$ and let  $H=\J_{\Rw^{-1}}h$ with $h\in L^2(\R^n)$.  Note that the assumption on $w$ to be bounded from below, and  \eqref{ecu2sym} implies that for all $\xi\in \R^n$
\[
    \Rw^{-1}(\xi)\lesssim 1,
\]
and thus, by Plancherel Theorem
\begin{equation}\label{eq:boundedness_H}
        \norm{H}_{L^2(\R^n)}\lesssim \norm{h}_{L^2(\R^n)}.
\end{equation}

Note that we can write
\[
    \langle\Pi_1(f,g),H\rangle=\int\!\!\!\!\int_0^\infty (Q_tf)(x)v(t,x)w(t)(Q^{(2)}_tH)(x)\frac{\dd t}{t}\dd x,
\]
where $v(t,x):=(P_t^{(1)}g)(x)m(t)/w(t)$, which, by the definition of $X_w(\R^n)$ and the assumption on $m$ satisfies 
\begin{equation}\label{ecu10}
    \abs{v(t,x)}\lesssim\norm{m}_{L^\infty(\R^n)}\norm{g}_{X_w(\R^n)}, \quad \mbox{for all $t>0$ and $x\in\R^n$.}
\end{equation}
Observe that for fixed $t>0$, on the support of $\widehat{\psi^{(2)}}(t\xi)$, one has that $\abs{\xi}\approx t^{-1}$, and so \eqref{sigma_equiv_weight} and \eqref{equivalence} yield
\begin{equation}\label{eq:pointwise}
    {w(t)}\approx {\Rw(\xi)}.
\end{equation}
Applying the Plancherel theorem,  \eqref{eq:pointwise}, Tonelli's theorem, the properties of $\psi^{(2)}$ and the Plancherel theorem again yield the following quadratic estimate
\begin{equation}\label{eq:quadratic_1}
    \begin{split}
        &{\int\!\!\!\!\int_0^\infty\big|w(t)(Q_t^{(2)}\J_{\Rw^{-1}} h)(x)\big|^2\frac{\dd t}{t}\dd x}\approx {\int\!\!\!\!\int_0^\infty\big|{w(t)\widehat{\psi^{(2)}}(t\xi)\frac{\widehat{h}(\xi)}{\Rw(\xi)}}\big|^2\frac{\dd t}{t}\ddd\xi}\\
        &\qquad\approx {\int\!\!\!\!\int_0^\infty\big|\widehat{\psi^{(2)}}(t\xi)\widehat{h}(\xi)\big|^2\frac{\dd t}{t}\ddd\xi}\lesssim \norm{h}_{L^2(\R^n)}^2.
    \end{split}
\end{equation}
A slighter simpler argument gives
\begin{equation}\label{eq:quadratic_2}
    \bigg[\int\!\!\!\!\int_0^\infty \abs{(Q_tf)(x)}^2\frac{\dd t}{t}\dd x\bigg]^{1/2}\lesssim \norm{f}_{L^2(\R^n)}.
\end{equation}

Finally, using \eqref{ecu10}, the Cauchy-Schwarz inequality,   \eqref{eq:quadratic_1} and \eqref{eq:quadratic_2} yield
\begin{equation}\label{ecudual}
    \begin{split}
   &\abs{\langle\Pi_1(f,g),H\rangle}\lesssim\norm{m}_{L^\infty(\R^n)}\norm{g}_{X_w(\R^n)}\norm{f}_{L^2(\R^n)}\norm{h}_{L^2(\R^n)}.
    \end{split}
\end{equation}

To study $\Pi_2$, we have that
\[
    \langle\Pi_2(f,g),H\rangle=\int\!\!\!\!\int_0^\infty (Q_tf)(x)(Q_t^{(1)}g)(x)(P^{(2)}_tH)(x)m(t)\frac{\dd t}{t}\dd x.
\]
Since $g\in X_w(\R^n)\subset \BMO(\R^n)$,  Theorem \ref{rem_BMO_CM} implies that $|{(Q_t^{(1)}g)(x)}|^2t^{-1}\dd t\dd x$ is a Carleson measure,  with norm bounded by a constant times $\norm{g}_{X_w(\R^n)}^2$.
Thus the Cauchy-Schwarz inequality, the boundedness of $m$, the quadratic estimate \eqref{eq:quadratic_2}, Theorem \ref{Feff-Ste} and \eqref{eq:boundedness_H} yield
\begin{equation}\label{ecu11}
    \begin{split}
    &\abs{\langle\Pi_2(f,g),H\rangle}
   \lesssim\norm{m}_{L^\infty(\R^n)}\norm{f}_{L^2(\R^n)}\norm{g}_{X_w(\R^n)}\norm{h}_{L^2(\R^n)}.
    \end{split}
\end{equation}

Finally, combining \eqref{ecu11} and \eqref{ecudual} we deduce that
\[
\abs{\langle\Pi(f,g),J_{1/\Rw}h\rangle}\lesssim\norm{m}_{L^\infty(\R^n)}\norm{f}_{L^2(\R^n)}\norm{g}_{X_w(\R^n)}\norm{h}_{L^2(\R^n)},
\]
for all $f,h\in L^2(\R^n)$ and $g\in X_w(\R^n)$, which by duality yields \eqref{eq:L2}, and thus finishing the proof of the result.
\end{proof}

\parindent 10pt

\begin{bibdiv}
\begin{biblist}

\bib{Bon-Gre-Ky}{article}{
   author={Bonami, Aline},
   author={Grellier, Sandrine},
   author={Ky, Luong Dang},
   title={Paraproducts and products of functions in $BMO(\Bbb R^n)$ and
   $\scr H^1(\Bbb R^n)$ through wavelets},
   language={English, with English and French summaries},
   journal={J. Math. Pures Appl. (9)},
   volume={97},
   date={2012},
   number={3},
   pages={230--241},
   issn={0021-7824},
}

\bib{Bon-Iwa-Jon-Zin}{article}{
   author={Bonami, Aline},
   author={Iwaniec, Tadeusz},
   author={Jones, Peter},
   author={Zinsmeister, Michel},
   title={On the product of functions in BMO and $H^1$},
   language={English, with English and French summaries},
   journal={Ann. Inst. Fourier (Grenoble)},
   volume={57},
   date={2007},
   number={5},
   pages={1405--1439},
   issn={0373-0956},
}
\bib{Brummer-Naibo}{article}{
   author={Brummer, Joshua},
   author={Naibo, Virginia},
   title={Bilinear operators with homogeneous symbols, smooth molecules, and
   Kato-Ponce inequalities},
   journal={Proc. Amer. Math. Soc.},
   volume={146},
   date={2018},
   number={3},
   pages={1217--1230},
   issn={0002-9939},
}
\bib{Brummer-Naibo2}{article}{
   author={Brummer, Joshua},
   author={Naibo, Virginia},
   title={Weighted fractional Leibniz-type rules for bilinear multiplier
   operators},
   journal={Potential Anal.},
   volume={51},
   date={2019},
   number={1},
   pages={71--99},
   issn={0926-2601},
}
\bib{Caetano-Moura}{article}{
   author={Caetano, Ant\'{o}nio M.},
   author={Moura, Susana D.},
   title={Local growth envelopes of spaces of generalized smoothness: the
   subcritical case},
   journal={Math. Nachr.},
   volume={273},
   date={2004},
   pages={43--57},
   issn={0025-584X},
}

\bib{Cao-Ky-Yang}{article}{
   author={Cao, Jun},
   author={Ky, Luong Dang},
   author={Yang, Dachun},
   title={Bilinear decompositions of products of local Hardy and Lipschitz
   or BMO spaces through wavelets},
   journal={Commun. Contemp. Math.},
   volume={20},
   date={2018},
   number={3},
   pages={1750025, 30},
   issn={0219-1997},
}

\bib{CMbook}{book}{
   author={Coifman, Ronald R.},
   author={Meyer, Yves},
   title={Au del\`a des op\'{e}rateurs pseudo-diff\'{e}rentiels},
   language={French},
   series={Ast\'{e}risque},
   volume={57},
   note={With an English summary},
   publisher={Soci\'{e}t\'{e} Math\'{e}matique de France, Paris},
   date={1978},
   pages={i+185},
}
\bib{dominguez2018function}{article}{
   author={Dom{\'\i}nguez, Oscar},
   author={Tikhonov, Sergey},
   title={Function spaces of logarithmic smoothness: embeddings and characterizations},
  journal={arXiv preprint arXiv:1811.06399},
  year={2018}
}
\bib{Fefferman-Stein}{article}{
   author={Fefferman, C.},
   author={Stein, E. M.},
   title={$H^{p}$ spaces of several variables},
   journal={Acta Math.},
   volume={129},
   date={1972},
   number={3-4},
   pages={137--193},
   issn={0001-5962},
}

\bib{Cuerva-Francia}{book}{
   author={Garc\'{\i}a-Cuerva, Jos\'{e}},
   author={Rubio de Francia, Jos\'{e} L.},
   title={Weighted norm inequalities and related topics},
   series={North-Holland Mathematics Studies},
   volume={116},
   note={Notas de Matem\'{a}tica, 104},
   publisher={North-Holland Publishing Co., Amsterdam},
   date={1985},
   pages={x+604},
   isbn={0-444-87804-1},
}

\bib{Goldberg}{article}{
   author={Goldberg, David},
   title={A local version of real Hardy spaces},
   journal={Duke Math. J.},
   volume={46},
   date={1979},
   number={1},
   pages={27--42},
   issn={0012-7094},
}
\bib{Grafakos1}{book}{
   author={Grafakos, Loukas},
   title={Classical Fourier analysis},
   series={Graduate Texts in Mathematics},
   volume={249},
   edition={3},
   publisher={Springer, New York},
   date={2014},
   pages={xviii+638},
   isbn={978-1-4939-1193-6},
   isbn={978-1-4939-1194-3},
}
\bib{Grafakos2}{book}{
   author={Grafakos, Loukas},
   title={Modern Fourier analysis},
   series={Graduate Texts in Mathematics},
   volume={250},
   edition={3},
   publisher={Springer, New York},
   date={2014},
   pages={xvi+624},
   isbn={978-1-4939-1229-2},
   isbn={978-1-4939-1230-8},
}

\bib{Gra-Mal_Nai}{article}{
   author={Grafakos, Loukas},
   author={Maldonado, Diego},
   author={Naibo, Virginia},
   title={A remark on an endpoint Kato-Ponce inequality},
   journal={Differential Integral Equations},
   volume={27},
   date={2014},
   number={5-6},
   pages={415--424},
   issn={0893-4983},
}
\bib{Grafakos-Oh}{article}{
   author={Grafakos, Loukas},
   author={Oh, Seungly},
   title={The Kato-Ponce inequality},
   journal={Comm. Partial Differential Equations},
   volume={39},
   date={2014},
   number={6},
   pages={1128--1157},
   issn={0360-5302},
}
\bib{Grafakos-Torres}{article}{
   author={Grafakos, Loukas},
   author={Torres, Rodolfo H.},
   title={Multilinear Calder\'{o}n-Zygmund theory},
   journal={Adv. Math.},
   volume={165},
   date={2002},
   number={1},
   pages={124--164},
   issn={0001-8708},
}
\bib{Horm_book}{book}{
   author={H\"{o}rmander, Lars},
   title={Linear partial differential operators},
   series={Die Grundlehren der mathematischen Wissenschaften, Bd. 116},
   publisher={Academic Press, Inc., Publishers, New York; Springer-Verlag,
   Berlin-G\"{o}ttingen-Heidelberg},
   date={1963},
   pages={vii+287},
}
\bib{Janson_Jones}{article}{
   author={Janson, Svante},
   author={Jones, Peter W.},
   title={Interpolation between $H^{p}$ spaces: the complex method},
   journal={J. Functional Analysis},
   volume={48},
   date={1982},
   number={1},
   pages={58--80},
   issn={0022-1236}
}
\bib{Koezuka_Tomita}{article}{
   author={Koezuka, Kimitaka},
   author={Tomita, Naohito},
   title={Bilinear pseudo-differential operators with symbols in
   $BS^m_{1,1}$ on Triebel-Lizorkin spaces},
   journal={J. Fourier Anal. Appl.},
   volume={24},
   date={2018},
   number={1},
   pages={309--319},
   issn={1069-5869},
}
\bib{Mikhailets}{book}{
   author={Mikhailets, Vladimir A.},
   author={Murach, Aleksandr A.},
   title={H\"{o}rmander spaces, interpolation, and elliptic problems},
   series={De Gruyter Studies in Mathematics},
   volume={60},
   note={Translated and revised from the Russian by Peter V. Malyshev;
   With a preface by Yu. M. Berezansky},
   publisher={De Gruyter, Berlin},
   date={2014},
   pages={xii+297},
   isbn={978-3-11-029685-3},
   isbn={978-3-11-029689-1}
}
\bib{Moura}{article}{
   author={Moura, Susana},
   title={Function spaces of generalised smoothness},
   journal={Dissertationes Math. (Rozprawy Mat.)},
   volume={398},
   date={2001},
   pages={88},
   issn={0012-3862},
}
\bib{Naibo-Thomson}{article}{
   author={Naibo, Virginia},
   author={Thomson, Alexander},
   title={Coifman-Meyer multipliers: Leibniz-type rules and applications to
   scattering of solutions to PDEs},
   journal={Trans. Amer. Math. Soc.},
   volume={372},
   date={2019},
   number={8},
   pages={5453--5481},
   issn={0002-9947},
}

\bib{Paper1}{article}{
   author={Rodr\'{\i}guez-L\'{o}pez, Salvador},
   author={Staubach, Wolfgang},
   title={Some endpoint estimates for bilinear paraproducts and
   applications},
   journal={J. Math. Anal. Appl.},
   volume={421},
   date={2015},
   number={2},
   pages={1021--1041},
   issn={0022-247X},
}
\bib{Paper2}{article}{
   author={Rodr\'{\i}guez-L\'{o}pez, Salvador},
   author={Rule, David},
   author={Staubach, Wolfgang},
   title={A Seeger-Sogge-Stein theorem for bilinear Fourier integral
   operators},
   journal={Adv. Math.},
   volume={264},
   date={2014},
   pages={1--54},
   issn={0001-8708},
}
\bib{stein2}{book}{
   author={Stein, Elias M.},
   title={Singular integrals and differentiability properties of functions},
   series={Princeton Mathematical Series, No. 30},
   publisher={Princeton University Press, Princeton, N.J.},
   date={1970},
   pages={xiv+290},
}
\bib{Strichartz}{article}{
   author={Strichartz, Robert S.},
   title={Bounded mean oscillation and Sobolev spaces},
   journal={Indiana Univ. Math. J.},
   volume={29},
   date={1980},
   number={4},
   pages={539--558},
   issn={0022-2518},
}
\bib{Trie83}{book}{
   author={Triebel, Hans},
   title={Theory of function spaces},
   series={Mathematik und ihre Anwendungen in Physik und Technik},
   volume={38},
   publisher={Akademische Verlagsgesellschaft Geest \& Portig K.-G.,
   Leipzig},
   date={1983}
}
\end{biblist}
\end{bibdiv}

\end{document}